\renewcommand{\mathcal}{\mathscr}
\newtheorem{theorem}{Theorem} [section]
\newtheorem{lemma}[theorem]{Lemma} 
\newtheorem{defin}[theorem]{Definition}
\newtheorem{cor}[theorem]{Corollary}
\begin{document}

\title{Quotients of Jacobians}
\author[Raisa Serova]{Raisa Serova}

\email{serova.r@yandex.ru}

\begin{abstract}
	We prove that the quotient of Jacobian of a curve whose genus is greater than or equal to 5 under the action of a finite group acting on the curve is never uniruled, and classify all curves of genus 3 and 4 whose quotients of Jacobian is uniruled.
  \end{abstract}
\maketitle

\section{Introduction}\label{intro}

\normalsize{\, \,  Let $C$ be a compact Riemann surface of genus $g$, and let $G$ be a finite group of automorphisms of $C$. The group $G$ acts on the Jacobian $J$ of $C$. There is a question for which $g$ quotient $J/G$ has Kodaira dimension 0 in A. Beauville's article~\protect\cite{AB}. According to \cite[Theorem 2]{KL} this is equivalent to $J/G$ not to be uniruled. 

\begin{theorem}[\protect{\cite[Proposition 1]{AB}}]\label{b1} Assume $g \geq 21$. The quotient variety $J/G$ is not uniruled.\end{theorem}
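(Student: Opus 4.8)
The plan is to prove the stronger statement that $J/G$ has non-negative Kodaira dimension; since every uniruled variety has Kodaira dimension $-\infty$, this implies it is not uniruled. The automorphisms of $C$ induce automorphisms of the principally polarized abelian variety $J$, and these fix the origin, so the action is determined near $0$ by its differential, giving a faithful representation $\rho\colon G\to GL(T_0J)$ on the $g$-dimensional space $T_0J\cong H^1(C,\mathcal{O}_C)\cong H^0(C,\Omega^1_C)^{*}$. The canonical bundle of $J$ is trivial: the space $H^0(J,\Omega^g_J)=\bigwedge^g H^0(J,\Omega^1_J)$ is one-dimensional, spanned by a form $\omega$ on which $G$ acts through a character of finite order $N$. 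Hence $\omega^{\otimes N}$ is a $G$-invariant pluricanonical form on $J$, and it descends to a nowhere-vanishing section of $K_X^{\otimes N}$ over the smooth locus of $X:=J/G$.

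It then remains to show that this section extends to a resolution $\pi\colon\widetilde X\to X$, equivalently that the quotient singularities of $X$ are canonical; once this holds, $\pi^{*}$ of the descended form lies in $H^0(\widetilde X,K_{\widetilde X}^{\otimes N})$, so $\kappa(\widetilde X)\ge 0$ (in fact $=0$, as $K_{\widetilde X}$ is numerically trivial). Since the $G$-action is linear near each fixed point, every singularity of $X$ is analytically of the form $T_0J/\langle\gamma\rangle$ with $\langle\gamma\rangle$ cyclic, so I would invoke the Reid--Tai criterion: $X$ has canonical singularities provided that for every $\gamma\neq 1$, writing the eigenvalues of $\gamma$ on $T_0J$ as $e^{2\pi i t_1},\dots,e^{2\pi i t_g}$ with $t_j\in[0,1)$, one has
\[
\mathrm{age}(\gamma):=\sum_{j=1}^{g}t_j\;\ge\;1.
\]
One first checks that $G$ contains no quasi-reflections: a $\gamma\neq 1$ fixing a hyperplane of $T_0J$ would force $C/\langle\gamma\rangle$ to have genus $g-1$, which Riemann--Hurwitz rules out once $g\ge 4$.

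The heart of the matter is the age bound. The eigenvalue $1$ of $\gamma$ on $T_0J$ occurs with multiplicity equal to the genus $g'$ of $C/\langle\gamma\rangle$, which Riemann--Hurwitz keeps small relative to $g$; but the crude estimate $\mathrm{age}(\gamma)\ge (g-g')/m$ (with $m=\mathrm{ord}(\gamma)$) is worthless for automorphisms of large order. Instead I would compute the exact multiplicity $m_\ell$ of each eigenvalue $\zeta^{\ell}$ ($\zeta=e^{2\pi i/m}$) of $\gamma$ on $H^0(C,\Omega^1_C)$ via the Chevalley--Weil formula, in terms of $g'$ and the local rotation numbers at the ramification points of $C\to C/\langle\gamma\rangle$, and substitute into
\[
\mathrm{age}(\gamma)=\frac{1}{m}\sum_{\ell=1}^{m-1}(m-\ell)\,m_\ell .
\]
The key structural fact is that the Chevalley--Weil multiplicities are \emph{spread out} across the residues $\ell$: no single eigenvalue can absorb almost all of the $g-g'$ nonzero rotation numbers, so the weighted sum above cannot be forced close to $0$.

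The main obstacle, and the source of the explicit threshold $g\ge 21$, is to control exactly those $\gamma$ for which $\mathrm{age}(\gamma)<1$ might persist: such $\gamma$ must have large order with its nonzero rotation numbers on $T_0J$ all clustered just above $0$, i.e.\ its eigenvalues on $H^0(C,\Omega^1_C)$ clustered just below $1$. Combining the Chevalley--Weil expressions with Riemann--Hurwitz and the classical upper bound $m\le 4g+2$ (Wiman) on the order of an automorphism, I would show that such clustering is incompatible with $g-g'$ being large, so that every potential failure of $\mathrm{age}(\gamma)\ge 1$ forces $g\le 20$. (This bound is sufficient but not sharp, which is what leaves room for the genus $3$ and $4$ exceptions recorded in the abstract.) Once the age bound holds for all $\gamma\neq 1$, the singularities of $X$ are canonical, $\omega^{\otimes N}$ extends over $\widetilde X$, and $\kappa(\widetilde X)=0$; in particular $J/G$ is not uniruled.
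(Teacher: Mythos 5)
Your framework is sound and is, in outline, the same one the paper uses: reduce everything to the tangent representation at $0\in J$, which is $H^{0,1}(C)$ (and which controls every fixed point, since each $\sigma$ is a group automorphism of $J$), and verify the age/Reid condition $\sum a_i\geq r$ for every $\sigma\neq 1$. The paper packages the implication ``Reid condition $\Rightarrow$ not uniruled'' via Theorem \ref{kl1} (Koll\'ar--Larsen), whereas you re-derive the classical direction by descending a pluricanonical form through a resolution; that is legitimate, and your quasi-reflection exclusion via Riemann--Hurwitz for $g\geq 4$ is correct. But the heart of the proof is missing. At the decisive quantitative step you assert a ``key structural fact'' --- that the Chevalley--Weil multiplicities are spread out, so that the eigenvalues of $\gamma$ cannot cluster just below $1$ on $H^{0}(C,\Omega^1_C)$ --- and then say you ``would show'' that combining this with Riemann--Hurwitz and Wiman's bound forces $g\leq 20$ at any failure of the age bound. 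No inequality is ever derived, and the threshold $21$ is simply announced. Worse, the structural fact is false as literally stated: for the hyperelliptic involution all $g$ eigenvalues on $H^{0}(C,\Omega^1_C)$ equal $-1$, so a single eigenvalue absorbs everything (harmless in that instance only because the age is $g/2$, but it shows ``spreading'' is not a lemma you can lean on).

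What actually closes the argument --- in Beauville's proof of this statement and in the paper's proof of the sharper Theorem \ref{m1} --- is the Lefschetz fixed point formula, which your sketch never invokes: the trace of $\sigma$ on $H^{1}(C,\mathbb{C})$ is $2-|\mathrm{Fix}_{C}(\sigma)|\leq 2$, i.e.
\begin{equation*}
\sum_{j=1}^{g} 2\cos\frac{2\pi a_{j}}{r}\;\leq\;2.
\end{equation*}
A linear lower bound for cosine (the paper uses $\cos x\geq 1-\frac{10x}{13}$, Lemma \ref{ml1}) then gives $1\geq\sum_{j}\cos\frac{2\pi a_{j}}{r}\geq g-\frac{20\pi}{13r}\sum_{j}a_{j}$, hence $\sum_{j}a_{j}\geq\frac{13(g-1)r}{20\pi}>r$ once $g\geq 1+\frac{20\pi}{13}\approx 5.83$; a cruder cosine estimate yields Beauville's threshold $g\geq 21$. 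This single trace inequality is exactly the mechanism that kills your ``rotation numbers clustered just above $0$'' scenario --- if all $a_{j}/r$ were small the trace would be close to $2g>2$ --- and it needs neither Chevalley--Weil nor the bound $r\leq 4g+2$. (Note also that the paper does not reprove the quoted statement, which is cited from \cite{AB}; its own version of this argument appears in the proof of Theorem \ref{m1}.) As written, your proposal is a correct reduction followed by an unproven, and as stated incorrect, key claim, so it does not constitute a proof.
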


\begin{theorem}[\protect{\cite[Proposition 3]{AB}}]\label{b2} Assume $g = 5$. The quotient variety $J/G$ is not uniruled.\end{theorem}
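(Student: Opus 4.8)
The plan is to prove the stronger statement that $J/G$ has only canonical singularities; by \cite[Theorem 2]{KL} this already yields the conclusion. Indeed, since $G\subset\operatorname{Aut}(C)$ fixes the origin of $J=\operatorname{Pic}^0(C)$ and acts linearly on $V:=T_0J\cong H^1(C,\mathcal{O}_C)$, a suitable power of the translation-invariant volume form on $J$ is $G$-invariant, and if the singularities are canonical it extends to any resolution, giving $\kappa(J/G)=0\neq-\infty$, hence $J/G$ is not uniruled. The first step is to eliminate quasi-reflections: if some $\sigma\in G$ fixed a hyperplane of $V$, then $\dim V^\sigma=4$, and since $\dim V^\sigma=\dim H^1(\mathcal{O}_C)^\sigma$ equals the genus $g_0$ of $C/\langle\sigma\rangle$, this forces $g_0=4$, which Riemann--Hurwitz ($8=n(2g_0-2)+R\ge 6n$, $n=\operatorname{ord}\sigma$) forbids for $n\ge2$. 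With no quasi-reflections present, the Reid--Tai criterion reduces everything to showing $\operatorname{age}(\sigma)\ge1$ for every nontrivial $\sigma$, where for eigenvalues $e^{2\pi i b_1/n},\dots,e^{2\pi i b_5/n}$ of $\sigma$ on $V$ (with $0\le b_j<n$) one sets $\operatorname{age}(\sigma)=\tfrac1n\sum_j b_j$.

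For a fixed $\sigma$ I would pass to the cyclic cover $C\to C/\langle\sigma\rangle$ and invoke the Chevalley--Weil formula, expressing the multiplicity $M_k$ of the eigenvalue $e^{2\pi i k/n}$ of $\sigma$ on $H^0(C,\Omega^1)$ through $g_0$ and the local monodromies $a_1,\dots,a_r$ at the branch points as $M_k=(g_0-1)+\sum_{i=1}^r\langle -ka_i/n\rangle$ for $k\neq0$ (and $M_0=g_0$), where $\langle\cdot\rangle$ is the fractional part. Two facts drive the argument. First, dualizing, $\operatorname{age}(\sigma)+\operatorname{age}(\sigma^{-1})$ is the number of nontrivial eigenvalues $5-g_0$, so $g_0\le3$; second, $M_k\ge g_0-1$. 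Riemann--Hurwitz, $8=n(2g_0-2)+\sum_i(n-\gcd(a_i,n))$, then bounds the data. For $g_0=3$ it forces $n=2$, so $\sigma$ is an involution whose two nontrivial eigenvalues equal $-1$ and $\operatorname{age}(\sigma)=1$. For $g_0=2$ it forces $n\le4$, and the bound $M_k\ge1$ keeps the three nontrivial multiplicities balanced; a one-line check for $n=2,3,4$ gives $\operatorname{age}(\sigma)\ge1$ (the only borderline element, of type $\tfrac14(1,1,1)$, is excluded precisely because $M_2,M_3\ge1$ cannot both vanish).

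The cases $g_0\in\{0,1\}$ are the heart of the matter, since there $M_k\ge g_0-1$ is vacuous and $n$ is a priori large, so the eigenvalues could cluster near $e^{-2\pi i/n}$ and drag the age below $1$. I would bound the ramification as follows. For $g_0=1$ Riemann--Hurwitz reads $\sum_i(n-\gcd(a_i,n))=8$; each summand is $\ge n/2$ and a connected cover needs $r\ge2$ branch points, so $n\le8$. For $g_0=0$ one has $n(r-2)=8+\sum_i\gcd(a_i,n)$ with $r\ge3$; writing $\gcd(a_i,n)=n/e_i$ turns this into $n\bigl(r-2-\sum_i 1/e_i\bigr)=8$, and since the positive quantity $r-2-\sum_i 1/e_i$ is bounded below by Sylvester-type (Egyptian-fraction) estimates while $r$ itself is bounded, $n$ is bounded. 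Thus only finitely many ramification types $(n;a_1,\dots,a_r)$ survive the connectivity condition $\gcd(a_1,\dots,a_r,n)=1$ and the existence condition $\sum_i a_i\equiv0\pmod n$. For each I would compute $\operatorname{age}(\sigma)=\tfrac1n\sum_{k=1}^{n-1}(n-k)M_k$ and verify $\operatorname{age}(\sigma)\ge1$; the mechanism that always rescues the estimate is that $M_k\in\mathbb{Z}_{\ge0}$ together with the symmetry $M_k+M_{n-k}=r-2$ (for totally ramified covers) forces the eigenvalues to spread out rather than pile up near $e^{-2\pi i/n}$.

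The main obstacle I anticipate is exactly this last step: organizing the finite but sizeable enumeration for $g_0=0$ and handling correctly the non-totally-ramified points, where some $\langle -ka_i/n\rangle$ vanish and the clean pairing $M_k+M_{n-k}=r-2$ breaks down, so the bookkeeping must be done with care. A cleaner route, which I would attempt first, is to extract a direct lower bound for $\tfrac1n\sum_k(n-k)M_k$ purely from $\sum_{k\neq0}M_k=5$ and the symmetry of the $M_k$, which would make the age estimate uniform and bypass the type-by-type verification altogether.
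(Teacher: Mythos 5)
Your skeleton is sound and most of what you actually carry out is correct: the reduction via Theorem~\ref{kl1} to the element-wise bound $\operatorname{age}(\sigma)\ge 1$ on $H^{0,1}(C)$ (legitimate, since every $\sigma\in G$ fixes $0\in J$ and the linear part determines the tangent eigenvalues at any fixed point), the elimination of quasi-reflections ($g_0=4$ contradicts $8=n(2g_0-2)+R\ge 12$), the identity $\operatorname{age}(\sigma)+\operatorname{age}(\sigma^{-1})=5-g_0$, and the complete disposal of $g_0\in\{2,3\}$ all check out. Bear in mind that the paper under review does not prove this statement at all --- it imports it as \cite[Proposition 3]{AB} --- so the fair comparison is with the paper's analogous genus~$3$ and~$4$ analyses, which run exactly the kind of finite case check you defer: bound the order by $4g+2$ (Theorem~\ref{order}), write Lefschetz inequalities for $\sigma$ and its powers, enumerate the eigenvalue multisets violating the Reid condition, and kill the survivors with the Eichler trace formula (conditions $\widetilde{E}$, $\widetilde{RH}$) or Nakagawa's classification. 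Your Chevalley--Weil bookkeeping is an equivalent repackaging of those constraints.

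The genuine gap is that you never perform the $g_0\in\{0,1\}$ verification, and this is not a routine omission: it is the entire content of the theorem. Worse, the ``cleaner route'' you say you would attempt first --- a uniform lower bound on $\tfrac1n\sum_k(n-k)M_k$ from $\sum_{k\neq 0}M_k=5$ plus symmetry --- provably cannot exist. Those constraints alone admit all five nontrivial eigenvalues equal to $e^{2\pi\mathbf{i}(n-1)/n}$ (age $5/n$); more tellingly, in genus $3$ and $4$ the identical Chevalley--Weil/Riemann--Hurwitz framework with $g_0=0$ is satisfied by actual automorphisms of age less than $1$ --- the paper's $N=14$ case $y^{2}=x(x^{7}-1)$ with eigenvalues $\zeta,\zeta^{3},\zeta^{5}$ (Theorem~\ref{m3}), and the $N=15,18$ cases in genus $4$ (Theorem~\ref{m2}), all cyclic covers of $\mathbb{P}^{1}$. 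So any successful argument must use the specific value $g=5$ through the enumeration itself. How tight this is can be seen from the extremal genus-$5$ case $n=22$, $y^{2}=x(x^{11}-1)$: the eigenvalues on $H^{0}(C,K_{C})$ are $\zeta,\zeta^{3},\zeta^{5},\zeta^{7},\zeta^{9}$, giving age $25/22$ --- the bound holds by a margin of $3/22$, which no soft counting will detect. Two smaller quibbles: for $g_0=1$ the exclusion of $r=1$ follows from the monodromy relation (a single branch point forces trivial local monodromy), not from connectedness; and your Sylvester-type bound for $g_0=0$ allows a priori $n$ up to $8\cdot 42=336$, whereas Theorem~\ref{order} caps $n$ at $22$ and should be invoked to make the enumeration feasible. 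In short: right machinery, but the proof stops exactly where the work begins.
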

}

{
For $g = 2$ there is a criterion in \cite{AB} that allows to determine whether the  quotient variety $J/G$ is uniruled or not. Assume  $\mathbf{i}^{2} = -1$ and $\omega = e^{\frac{2\pi \mathbf{i}}{3}}$.

\begin{theorem}[\protect{\cite[Proposition 2]{AB}}]\label{b3} If $g = 2$, then  $J/G$ is uniruled, except for the cases when G is a cyclic group generated by $\sigma$ and $\sigma$ is hyperelliptic involution or an automorphism of order 3 with eigenvalues $(\omega, \omega^{2})$ on $H^{0, 1}(C) \cong H^{0}(C, K_{C})$ or an automorphism of order 6 with eigenvalues $(-\omega, -\omega^{2})$ on $H^{0, 1}(C)$. \end{theorem}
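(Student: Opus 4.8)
The plan is to compute the Kodaira dimension of $J/G$ directly through its plurigenera, using the dichotomy of \cite[Theorem 2]{KL} to replace ``not uniruled'' by ``$\kappa(J/G)=0$''. Since every element of $G$ acts on $J$ as a group automorphism, the action is linear and fixes the origin; set $V=H^{0,1}(C)\cong T_0J$, a two-dimensional representation of $G$, and let $\eta$ be a translation-invariant holomorphic two-form on $J$, so that $H^0(J,mK_J)=\mathbb{C}\,\eta^{\otimes m}$ and $G$ acts on this line through the determinant character $\chi=\det_V$. Because the restriction map to the dense open $J\smallsetminus\operatorname{Fix}(G)$ is injective, every plurigenus satisfies $P_m(J/G)\le 1$, with equality precisely when $\chi^m=1$ and $\eta^{\otimes m}$ extends holomorphically to a resolution of $J/G$. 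The whole problem is thus to decide, for some $m$ with $\chi^m=1$, whether this extension exists.

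The extension can fail in exactly two ways, which I would treat separately. If $G$ contains a pseudo-reflection, i.e. an element with eigenvalue $1$ whose fixed locus is a divisor, then $\eta^{\otimes m}$ acquires a pole along the corresponding branch divisor for every $m\ge 1$, all plurigenera vanish, and $J/G$ is uniruled. If there are no pseudo-reflections, the fixed loci are isolated points, the singularities of $J/G$ are the cyclic quotient singularities attached to the local stabilizers, and by the Reid--Tai criterion $\eta^{\otimes m}$ extends iff every such singularity is canonical, i.e. iff $\operatorname{age}(g)\ge 1$ for all $1\ne g\in G$. In dimension two the identity $\operatorname{age}(g)+\operatorname{age}(g^{-1})=2$ (valid when $g$ has no eigenvalue $1$) sharpens this to both ages being $1$, i.e. the eigenvalues of each non-trivial $g$ form an inverse pair $\{\zeta,\zeta^{-1}\}$ and $\det_V(g)=1$. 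I would thus record the clean criterion: $J/G$ is not uniruled if and only if $G$ acts on $V$ through $SL(V)\cong SL_2(\mathbb{C})$ (equivalently $\chi\equiv 1$), in which case the local stabilizers give rational double points and a crepant resolution carries a nowhere-zero two-form.

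It then remains to determine which $G\le\operatorname{Aut}(C)$ satisfy this. Here I would use that $\operatorname{Aut}(C)\hookrightarrow GL(V)$ is faithful (the hyperelliptic involution $\iota$ maps to $-\operatorname{id}$), that $G/(G\cap\langle\iota\rangle)$ embeds into $PGL_2$ as a group of permutations of the six Weierstrass points, and the classical finiteness of the automorphism groups occurring for $g=2$. For a cyclic generator $\sigma$ I would read off its eigenvalues from the hyperelliptic model, where $H^0(C,K_C)$ is spanned by $dx/y$ and $x\,dx/y$ and $H^{0,1}$ is its conjugate, cross-checking with Riemann--Hurwitz and the holomorphic Lefschetz (Eichler) fixed-point formula. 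Keeping only the inverse-pair (determinant-one) configurations produces the determinant-one generators; carried out order by order this yields the hyperelliptic involution and the order-$3$ automorphisms with eigenvalues $(\omega,\omega^2)$ and the order-$6$ automorphisms with eigenvalues $(-\omega,-\omega^2)$ of the statement, whose quotient singularities are of type $A_1$, $A_2$, and $A_5$ respectively.

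The main obstacle I anticipate is precisely the exhaustiveness of this last step. One must compute the eigenvalue pair for every automorphism order that occurs in genus $2$ and for every admissible group, not only for cyclic generators but also for the larger groups, where one has to verify that the determinant character is non-trivial, so as to be certain that the determinant-one configurations are exactly those intended and that no further group survives. The conceptual reduction of the first two paragraphs is clean and robust; the delicate, computational heart of the argument is this complete bookkeeping of the rotation numbers at the fixed points (the Chevalley--Weil data), which is what ultimately pins down the exceptional list.
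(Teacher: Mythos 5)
The paper does not actually prove Theorem \ref{b3}: it is quoted from \cite[Proposition 2]{AB} without proof, so your attempt can only be measured against the criterion underlying it. Your first two paragraphs are correct and are indeed the standard route: every $\sigma\in G$ acts on $J$ linearly fixing the origin, with the same differential at every fixed point, so the global Reid condition of Theorem \ref{kl1} reduces to $\mathrm{age}(\sigma)\geq 1$ for all $\sigma\neq 1$; an element with an eigenvalue $1$ fixes a divisor (a translate of a subtorus) and fails the condition, and for the remaining elements $\mathrm{age}(\sigma)+\mathrm{age}(\sigma^{-1})=2$ forces all ages to equal $1$. Hence $J/G$ is not uniruled if and only if $G$ acts on $V=H^{0,1}(C)$ through $\mathrm{SL}(V)$, as you say.

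The genuine gap is in the final bookkeeping, exactly where you flagged the risk — and the outcome you assert there is wrong. Your own determinant-one criterion admits the order-$4$ automorphism with eigenvalues $(\mathbf{i},-\mathbf{i})$ and $\sigma^{2}=\iota$ (an $A_{3}$ point, absent from your $A_{1},A_{2},A_{5}$ list), and it is realized in a one-parameter family: on $y^{2}=x(x^{2}-1)(x^{2}-a^{2})$ take $\sigma(x,y)=(-x,\mathbf{i}y)$; in the basis $\frac{dx}{y},\frac{x\,dx}{y}$ the eigenvalues are $\mathbf{i},-\mathbf{i}$, all ages equal $1$, and $J/\langle\sigma\rangle$ is not uniruled. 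This is precisely the example the present paper invokes in the proof of Corollary \ref{c1} while citing Theorem \ref{b3}, so the statement as transcribed here is internally inconsistent with the paper's own use of it and must be an abridged form of Beauville's proposition; a faithful execution of your plan would expose this rather than reproduce the three stated cases. Moreover, non-cyclic groups survive the criterion: on $y^{2}=x^{5}-x$ the lifts $\sigma_{1}(x,y)=(-x,\mathbf{i}y)$ and $\sigma_{2}(x,y)=(1/x,\mathbf{i}y/x^{3})$ both square to $\iota$, act on $V$ with determinant one and anticommute (their matrices are $\mathrm{diag}(\mathbf{i},-\mathbf{i})$ and the anti-diagonal matrix with entries $\mathbf{i},\mathbf{i}$), so they generate a quaternion group $Q_{8}\subset \mathrm{SL}(V)$; similarly the binary dihedral group of order $12$ acts through $\mathrm{SL}(V)$ on $y^{2}=x^{6}-1$. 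So the clause ``$G$ is a cyclic group'' cannot come out of the $\mathrm{SL}(V)$ criterion either. In short: your conceptual reduction is sound, but the claim that the order-by-order Chevalley--Weil computation returns exactly the hyperelliptic involution and the order-$3$ and order-$6$ cases is false, and since you never carry that enumeration out, the proposal does not establish the statement (which, as quoted, is itself defective).
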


}

{
The main results of this work are the following theorems. Let $C$ be a compact Riemann surface of genus $g$, and let $G$ be a finite group of automorphisms of  $C$. The group $G$ acts on the Jacobian $J$ of compact Riemann surface $C$. Let us denote the order of automorphism $\sigma$ of $G$ by $N$.

\begin{theorem}\label{m1}Assume $g \geq 5$. The quotient variety $J/G$ is not uniruled.\end{theorem}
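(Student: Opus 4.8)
The plan is to describe the singularities of $J/G$ through the Reid--Tai criterion. Since $J$ is an abelian variety its canonical bundle is trivial, so $J/G$ is not uniruled (equivalently, by \cite{KL}, has Kodaira dimension $0$) exactly when it has at worst canonical singularities, and by Reid--Tai this holds if and only if every non-trivial element of $G$ is \emph{senior} for its linear action on the tangent space $T_0 J \cong H^{0,1}(C)$. Concretely, if $\sigma \in G$ has order $N$ and acts on the $g$-dimensional space $H^{0,1}(C)$ with eigenvalues $e^{2\pi\mathbf{i}\, a_j/N}$, $0 \le a_j \le N-1$ for $j = 1,\dots,g$, then $\mathrm{age}(\sigma) = \frac{1}{N}\sum_{j=1}^{g} a_j$, and seniority means $\mathrm{age}(\sigma) \ge 1$. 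Since $J/G$ is uniruled precisely when some $\sigma \in G$ is \emph{junior} ($\mathrm{age}(\sigma) < 1$), it suffices to prove that no single automorphism of a curve of genus $g \ge 5$ is junior; in particular I may assume $G = \langle\sigma\rangle$ is cyclic and bound the ages of $\sigma$ and of all its powers at once.

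Next I would record the two structural inputs. Let $\pi\colon C \to Y := C/\langle\sigma\rangle$ be the quotient, $h := g(Y)$, with $s$ branch points of ramification indices $m_1,\dots,m_s$ (each $m_i \mid N$). Riemann--Hurwitz gives
\[
\frac{2g-2}{N} = 2h - 2 + \sum_{i=1}^{s}\Bigl(1 - \tfrac{1}{m_i}\Bigr).
\]
Writing $d_k$ for the multiplicity of the eigenvalue $e^{2\pi\mathbf{i}\,k/N}$ on $H^{0,1}(C)$, one has $d_0 = h$, $\sum_{k} d_k = g$, and $\mathrm{age}(\sigma) = \frac{1}{N}\sum_{k=1}^{N-1} k\, d_k$. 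The Chevalley--Weil formula expresses each $d_k$ explicitly in terms of $h$ and the local rotation numbers at the branch points, so that the age becomes an explicit function of the discrete data $(N, h, \{(m_i,\nu_i)\})$. The crude estimate $\mathrm{age}(\sigma) \ge \frac{g-h}{N}$ (every nonzero $a_j \ge 1$) already gives seniority when $N$ is small relative to $g$, namely $g \ge N + h$; but a cyclic automorphism can have order as large as $4g+2$, so this bound is hopeless for large $N$, and the exact Chevalley--Weil expression is indispensable.

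The heart of the argument is then the inequality: if $\mathrm{age}(\sigma) < 1$ then $g \le 4$. By Theorems~\ref{b1} and~\ref{b2} it is enough to treat $6 \le g \le 20$, so I would combine the Riemann--Hurwitz relation with the age formula to bound $N$, $h$ and $s$ under the juniority hypothesis, reducing to a finite list of admissible ramification profiles. Seniority is immediate once $h \ge 1$, or once $s$ or $g-h$ is moderately large, because then either many nonzero $a_j$ occur or they cannot all concentrate near the value $1$; the delicate regime is $h = 0$ with few branch points and $N$ close to its maximum, which is exactly where the genuine low-genus exceptions ($g = 2,3,4$) live. For these I would enumerate the rotation-number profiles realizing a given $g \in [6,20]$ and check directly, using the explicit $d_k$, that $\min_{1 \le t \le N-1}\mathrm{age}(\sigma^t) \ge 1$.

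I expect this last, sharp case analysis to be the main obstacle: the naive bounds degenerate exactly when $N$ is large and $h = 0$, so one must control the Chevalley--Weil fractional-part sums finely enough to pin the boundary between ``a junior power exists'' and ``all powers senior'' precisely at $g = 4$ versus $g = 5$. Verifying that the finitely many $h = 0$ configurations with $6 \le g \le 20$ contain no junior element, while the profiles that \emph{do} produce junior elements all force $g \le 4$, is the technical crux on which the theorem rests.
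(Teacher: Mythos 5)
Your proposal is a programme, not a proof: its central claim --- ``if $\mathrm{age}(\sigma)<1$ then $g\le 4$'' --- is exactly the theorem, and you reduce it to a finite Chevalley--Weil enumeration over $6\le g\le 20$, all orders $N\le 4g+2\le 82$, and all ramification profiles $(h,\{(m_i,\nu_i)\})$, which you then explicitly defer (``I expect this last, sharp case analysis to be the main obstacle''). The auxiliary bounds you offer to close the enumeration do not close it. The crude estimate $\mathrm{age}(\sigma)\ge (g-h)/N$ gives seniority only when $N\le g-h$, which is useless precisely in the regime $N$ close to $4g+2$ that you single out as delicate. The assertion that ``seniority is immediate once $h\ge 1$'' is false as stated: a bielliptic involution has $h=1$ and eigenvalues $1,-1$ on $H^{0,1}$, hence age $\tfrac12$; so any version of this step must be quantitative and genus-dependent, and no such statement is supplied. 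Likewise ``$s$ or $g-h$ moderately large'' is a heuristic, not an inequality. So in the only hard regime ($h=0$, few branch points, large $N$) the argument simply stops, and nothing in the proposal guarantees that the unperformed check would come out as claimed.

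The paper's proof shows this entire apparatus is unnecessary. After reducing to the global Reid condition via Theorem \ref{kl1} (as you do), it uses a single input --- the Lefschetz fixed-point formula, giving $2\sum_i \cos\bigl(\tfrac{2\pi a_i}{r}\bigr)=2-|\mathrm{Fix}_C(\sigma)|\le 2$ --- combined with the elementary sharpened linear bound $\cos x\ge 1-\tfrac{10x}{13}$ of Lemma \ref{ml1}. This yields $\sum a_i\ge \tfrac{13(g-1)r}{20\pi}$, hence $\sum a_i>r$ for every $\sigma$ and every power of it as soon as $g\ge 1+\tfrac{20\pi}{13}\approx 5.83$, i.e.\ $g\ge 6$, uniformly in $N$, $h$, $s$ and the rotation numbers; the case $g=5$ is then Beauville's result (Theorem \ref{b2}), a citation your route would need as well (no bound of the form $\cos x\ge 1-cx$ can reach $g=5$, since the optimal $c\approx 0.725$ still gives a threshold above $5.5$). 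If you want to salvage your Chevalley--Weil route you must either actually carry out the finite enumeration or prove a uniform estimate controlling the fractional-part sums for all powers of $\sigma$ at once --- and the paper's cosine inequality is precisely such a uniform estimate, obtained without Riemann--Hurwitz or Chevalley--Weil at all.
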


For $g = 4$ we will prove the criterion that allows to determine whether the quotient variety $J/G$ is uniruled or not.

\begin{theorem}\label{m2}Assume $g = 4$, then $J/G$  is uniruled if and only if G contains an element of one of the following orders and  compact Riemann surface is isomorphic to the following one:

\begin{center}
    \begin{minipage}{0.95\textwidth}
      \begin{enumerate}
	\item{$N = 15$, compact Riemann surface is isomorphic to $y^{3} = x(x^{5} - 1)$ and  $\sigma(x, y) = (\zeta^{3} x, \zeta y)$,  where $\zeta = e^{\frac{2\pi \mathbf{i}}{15}}$;}
	\item{$N = 18$, compact Riemann surface is isomorphic to $y^{2} = x(x^{9} - 1)$ and  $\sigma(x, y) = (\zeta^{2} x, \zeta y)$,  where $\zeta = e^{\frac{2\pi \mathbf{i}}{18}}$.}
      \end{enumerate}
    \end{minipage}
  \end{center}

\end{theorem}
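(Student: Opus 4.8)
The plan is to run the same age-theoretic mechanism that underlies Theorems~\ref{b1}--\ref{b3}. Write $V=H^{0}(C,K_{C})$ for the cotangent space of $J$ at the origin; an element $\sigma\in G$ of order $M$ acts on $V$ with eigenvalues $e^{2\pi\mathbf{i}t_{1}},\dots,e^{2\pi\mathbf{i}t_{4}}$, $t_{j}\in[0,1)$, and we set $\operatorname{age}(\sigma)=\sum_{j}t_{j}$. By the Reid--Tai analysis of quotients of abelian varieties together with the dichotomy of \cite[Theorem 2]{KL}, $J/G$ has non-negative Kodaira dimension (equivalently, is not uniruled) exactly when it has canonical singularities, i.e. when $\operatorname{age}(\sigma)\ge 1$ for every nontrivial $\sigma$, whereas the presence of a single \emph{junior} element ($\operatorname{age}<1$) forces $J/G$ to be uniruled. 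Since the multiset $\{\operatorname{age}(\sigma)\}_{\sigma}$ is unchanged if one computes on $H^{0,1}(C)$ instead (via $\sigma\leftrightarrow\sigma^{-1}$), it is harmless to work on $V$. Thus the theorem reduces to the purely curve-theoretic task of classifying all pairs $(C,\sigma)$ with $g(C)=4$ and $\operatorname{age}(\sigma)<1$: any such $\sigma$ already generates a cyclic $G$ making $J/G$ uniruled, and conversely any $G$ containing such a $\sigma$ works, so it suffices to range over individual automorphisms.

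First I would reduce the combinatorics. If $C/\langle\sigma\rangle$ has genus $h$ and $\sigma$ has order $M$, then the eigenvalue $1$ occurs in $V$ with multiplicity $h$, and Riemann--Hurwitz reads $6=M(2h-2)+\sum_{i}M(1-1/m_{i})$ over the branch orders $m_{i}\mid M$. Together with Wiman's bound $M\le 4g+2=18$ this leaves only finitely many admissible triples $(M,h,\{m_{i}\})$, which I would list explicitly. For each such triple the eigenvalue multiplicities of $\sigma$ on $V$ are given by the Chevalley--Weil/Eichler trace formula (equivalently, read off from an explicit basis of differentials $x^{i}\,dx/y^{j}$ on the corresponding superelliptic model), so $\operatorname{age}(\sigma)$ and the ages of all its powers are completely computable, and I would then test the junior condition triple by triple. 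The computation shows that every junior $\sigma$ has $h=0$, i.e. realizes $C$ as a cyclic cover of $\mathbb{P}^{1}$.

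The check isolates exactly two solutions. For the branch datum $(3,5,15)$ with $M=15$ the differentials $dx/y,\ dx/y^{2},\ x\,dx/y^{2},\ x^{2}\,dx/y^{2}$ carry the characters $\zeta^{2},\zeta,\zeta^{4},\zeta^{7}$, so $\operatorname{age}(\sigma)=14/15<1$; for the datum $(2,9,18)$ with $M=18$ the differentials $x^{i}\,dx/y$ carry $\zeta^{2i+1}$, giving $\operatorname{age}(\sigma)=16/18=8/9<1$. In both cases the three branch points of $\mathbb{P}^{1}$ normalize to $0,1,\infty$, and the requirement that the local monodromies generate the full group $\mathbb{Z}/M$ rigidifies the cover, pinning $C$ down to $y^{3}=x(x^{5}-1)$ and $y^{2}=x(x^{9}-1)$ respectively, with $\sigma$ as stated; conversely these explicit ages establish uniruledness for those two curves.

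The main obstacle is the completeness and bookkeeping of this finite enumeration rather than any single hard idea. Two points need particular care. First, one must rule out the intermediate and composite orders that Riemann--Hurwitz does permit---most notably $M=16$ with datum $(2,16,16)$, and the many orders $\le 14$---by showing their spectra, and those of all their powers, never become junior; here the geometric fact that a nontrivial $\sigma$ can act as a scalar on $V$ only when it is the hyperelliptic involution (so over-concentrated spectra such as $(1,1,1,1)$ for $M=5$ are impossible) is what eliminates the deceptively small-age candidates. Second, one must confirm that for $M=15,18$ no alternative branch datum (e.g. $(5,5,5)$ or $(3,6,6)$) and no non-faithful configuration survives, so that the two curves above are genuinely the only ones; this is where the rigidity of three-point covers of $\mathbb{P}^{1}$ does the decisive work.
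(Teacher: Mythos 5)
Your proposal is correct, and it reaches the classification by a genuinely different route from the paper's. Both arguments share the same skeleton: reduce to single automorphisms via Theorem \ref{kl1} (your age/junior language is exactly the paper's local Reid condition on $T_{0}(J)=H^{0,1}(C)$, and your $\sigma\leftrightarrow\sigma^{-1}$ remark legitimately transfers it to $H^{0}(C,K_{C})$), bound $N\le 18$, and then enumerate. Where you diverge is in how candidate spectra are generated and, crucially, how realizability is decided. The paper works top-down from character theory: for each order it lists eigenvalue multisets violating Reid, prunes them with Lefschetz fixed-point inequalities for $\sigma$ and its powers (Lemmas \ref{4m1}--\ref{4m8}), kills the survivors with Breuer's conditions $\widetilde{E}$ and $\widetilde{RH}$ (Theorem \ref{cr_e}), and finally invokes Nakagawa's classification (Theorem \ref{max_ord}) for $N=14,15,16,18$ to get nonexistence ($N=14$) and uniqueness of the curves ($N=15,16,18$). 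You work bottom-up from covering data: Riemann--Hurwitz enumeration of quotient data, Chevalley--Weil for the spectra, the generation condition on local monodromies to discard data such as $(5,5,5)$ for $M=15$ and $(3,6,6)$ for $M=18$, and rigidity of three-point cyclic covers for uniqueness. This buys you two things: by the Riemann existence theorem every admissible generating vector is actually realized, so existence comes for free instead of requiring the $\widetilde{E}$/$\widetilde{RH}$ bookkeeping, and you never need Nakagawa's theorems. The one place your write-up undersells the work is the enumeration itself: the spectrum is \emph{not} determined by the triple $(M,h,\{m_{i}\})$ --- Chevalley--Weil needs the rotation numbers (the generating vector) at the branch points, so the finite sweep must be over generating vectors up to the action of $(\mathbb{Z}/M)^{\times}$, not just over branch data; for instance for $M=15$ with datum $(15,5,3)$, normalizing the order-$15$ rotation number to $1$ forces the vector $(1,9,5)$, and it is this normalization, not the branch datum alone, that pins down a unique curve. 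That correction is routine, and your numerical conclusions (ages $14/15$ and $8/9$ for the two winners, $M=16$ giving age exactly $1$ and hence not junior, and junior elements forcing $h=0$) all agree with the paper's tables and explicit eigenbases.
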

}

{For $g = 3$ we will prove the criterion that allows to determine whether the quotient variety $J/G$ is uniruled or not.

\begin{theorem}\label{m3}Assume $g = 3$, then $J/G$  is uniruled if and only if G contains an element of one of the following orders with following eigenvalues on $H^{0, 1}(C)$:

\begin{center}
    \begin{minipage}{0.88\textwidth}
      \begin{enumerate}
	\item{N = 2, eigenvalues $\sigma$: $1, 1, -1$;}
	\item{N = 7, eigenvalues $\sigma$: $\zeta, \zeta^{2}, \zeta^{3}$, where $\zeta = e^{\frac{2\pi \mathbf{i}}{7}}$;}
	\item{$N = 8$, eigenvalues $\sigma$: $\zeta, \zeta^{2}, \zeta^{3}$,  where $\zeta = e^{\frac{2\pi \mathbf{i}}{8}}$;}
	\item{$N = 9$,  eigenvalues $\sigma$: $\zeta, \zeta^{2}, \zeta^{4}$,  where $\zeta = e^{\frac{2\pi \mathbf{i}}{9}}$;}
	\item{$N = 12$, eigenvalues $\sigma$: $\zeta, \zeta^{3}, \zeta^{5}$, or  $\zeta, \zeta^{2}, \zeta^{5}$, where $\zeta = e^{\frac{2\pi \mathbf{i}}{12}}$;}
	\item{$N = 14$, eigenvalues $\sigma$: $\zeta, \zeta^{3}, \zeta^{5}$,  where $\zeta = e^{\frac{2\pi \mathbf{i}}{14}}$.}
      \end{enumerate}
    \end{minipage}
  \end{center}

All these cases are realized.

\end{theorem}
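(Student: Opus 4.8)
The plan is to follow the age/Reid--Tai method used by Beauville in \cite{AB}, reducing uniruledness of $J/G$ to a numerical condition on the individual elements of $G$. By \cite[Theorem 2]{KL}, $J/G$ is uniruled if and only if $\kappa(J/G)\neq 0$. Since $J$ is abelian, $K_J$ is trivial, and for the finite quotient $p\colon J\to J/G$ the pluricanonical sections of a resolution are exactly the $G$-invariant pluricanonical forms on $J$ that extend across the exceptional locus. By the Reid--Tai criterion this extension holds for all of them precisely when every $\sigma\in G\setminus\{1\}$ acts on $T_0 J\cong H^{0,1}(C)$ with $\mathrm{age}(\sigma):=\tfrac1N\sum_{j}a_j\geq 1$, where $N=\mathrm{ord}(\sigma)$ and the eigenvalues of $\sigma$ on $H^{0,1}(C)$ are $e^{2\pi\mathbf{i}a_1/N},\dots,e^{2\pi\mathbf{i}a_g/N}$ with $0\leq a_j<N$. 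In that case $\kappa(J/G)=0$; otherwise $\kappa(J/G)=-\infty$ and $J/G$ is uniruled. As every element of $G$ fixes $0\in J$, this condition is intrinsic to the cyclic group $\langle\sigma\rangle$, so $J/G$ is uniruled if and only if $G$ contains a \emph{junior} element, one with $\mathrm{age}(\sigma)<1$. It therefore suffices to classify the junior cyclic actions on genus-$3$ curves.

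First I would bound and enumerate these actions. By Wiman's bound the order of any automorphism of a genus-$3$ curve is at most $4g+2=14$, so $N\in\{2,\dots,14\}$. For each $N$ I would list the admissible signatures of the cover $C\to C/\langle\sigma\rangle$ from Riemann--Hurwitz, $2g-2=N(2h-2)+\sum_i(N-N/m_i)$ with $m_i\mid N$, retaining only those for which the local monodromies $c_1,\dots,c_r$ (of orders $m_i$) can be chosen to generate $\mathbb{Z}/N$ with $\prod_i c_i=1$; here $h$ is the genus of the quotient and equals $\dim H^{0,1}(C)^{\langle\sigma\rangle}$, i.e. the multiplicity of the trivial eigenvalue. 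For each datum I would then determine the full eigenvalue multiset $\{a_1,a_2,a_3\}$ from the Eichler trace formula $\mathrm{tr}\bigl(\sigma^{k}\mid H^{0,1}(C)\bigr)=1-\sum_{P\in\mathrm{Fix}(\sigma^{k})}(1-\zeta^{(k)}_{P})^{-1}$ (equivalently the Chevalley--Weil formula), where $\zeta^{(k)}_P$ is the rotation number of $\sigma^k$ at $P$, letting $k$ range over $1,\dots,N-1$.

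The selection step is then mechanical: keep exactly the data with $a_1+a_2+a_3<N$. I expect the main difficulty to be organizational completeness rather than any single hard estimate: one must be sure that every triple $(N;h;m_i)$ is covered, that the rotation numbers at the different fixed points are taken compatibly with the chosen monodromy, and that each borderline configuration with $\mathrm{age}\geq 1$ is correctly discarded. Two sample computations already show the mechanism and the pitfalls. For $N=2$ the signature with $h=2$ (the \'etale double cover of a genus-$2$ curve) forces eigenvalues $(1,1,-1)$ and $\mathrm{age}=\tfrac12$, giving case~(1), whereas the signatures $h=1$ and $h=0$ give $(1,-1,-1)$ and $(-1,-1,-1)$ with $\mathrm{age}=1,\tfrac32$ and contribute nothing. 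For $N=3$ the Eichler formula yields $\mathrm{tr}(\sigma\mid H^{0,1})=1-\tfrac1{1-\zeta}-\tfrac1{1-\zeta^{2}}=0$, forcing eigenvalues $(1,\omega,\omega^{2})$ with $\mathrm{age}=1$, so no order-$3$ action is junior. Running the same analysis for $N=4,\dots,14$ should leave precisely the orders $7,8,9,12,14$ with the listed eigenvalue triples together with $N=2$; in particular I would separate the two order-$12$ triples and verify that the order-$7$ triple $(\zeta,\zeta^{2},\zeta^{3})$ is a genuine order-$7$ action rather than the square of the order-$14$ one.

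Finally, to prove that all six cases are realized, I would exhibit an explicit genus-$3$ curve for each and check the eigenvalues on a basis of holomorphic forms. On a superelliptic model $y^{n}=f(x)$ the space $H^0(C,K_C)$ is spanned by forms $x^{i}y^{-j}\,dx$ on which $(x,y)\mapsto(\zeta^{a}x,\zeta^{b}y)$ acts diagonally, so the eigenvalues, and hence those on $H^{0,1}(C)$ by conjugation, can be read off directly; for instance $y^{2}=x^{7}-1$ carries the order-$14$ automorphism of case~(6), the Klein quartic realizes case~(2), and suitable cyclic covers of $\mathbb{P}^1$ realize the orders $8,9,12$. This simultaneously confirms realizability and cross-checks the eigenvalue bookkeeping of the enumeration.
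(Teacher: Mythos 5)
Your overall strategy is the same as the paper's: reduce via \cite[Theorem 2]{KL} to checking the Reid (age) condition element by element on $T_0J\cong H^{0,1}(C)$, bound $N\leq 14$ by Theorem \ref{order}, enumerate the possibilities for each order, and verify realizability. Your enumeration mechanism differs in a genuinely useful way: you iterate over branching data $(h;m_1,\dots,m_r)$ admitting a generating product-one monodromy tuple and read off the eigenvalues by Chevalley--Weil/Eichler, so realizability is built in from the start; the paper instead enumerates eigenvalue multisets constrained by Lefschetz fixed-point inequalities for $\sigma$ and its powers, and then prunes the non-existent cases using Breuer's conditions $\widetilde{E}$ and $\widetilde{RH}$ (Theorems \ref{cr_e}, \ref{cr_e_p}, Lemma \ref{example}) together with Nakagawa's classification of large-order automorphisms (Theorem \ref{max_ord}) for $N=9,10,14$. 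Both routes are complete; yours trades the $\widetilde{E}$/$\widetilde{RH}$ bookkeeping for signature bookkeeping, and your caveat about distinguishing the order-$7$ triple from the square of the order-$14$ action is well placed (the square of the order-$14$ element has exponent set $\{1,3,5\}$ modulo $7$, not $\{1,2,3\}$).

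There is, however, a genuine error in your realizability step: the Klein quartic does \emph{not} realize case (2). Its order-$7$ automorphism $\sigma(u,v,w)=(\zeta u,\zeta^{4}v,\zeta^{2}w)$ acts on $H^{0,1}(C)$ with eigenvalues $\zeta,\zeta^{2},\zeta^{4}$, and the powers $\sigma^{k}$ realize only the exponent sets $\{1,2,4\}$ (age exactly $1$) and $\{3,5,6\}$ (age $2$); the junior set $\{1,2,3\}$ never occurs. Indeed, the paper proves the opposite of your claim in Corollary \ref{B_K}: the quotient of the Jacobian of Klein's curve by its automorphism group is not uniruled. To repair case (2) within your framework, take instead the cyclic cover $y^{7}=x(x-1)$ of $\mathbb{P}^{1}$ branched over three points with local exponents $(1,1,5)$: Riemann--Hurwitz gives $g=3$, and Chevalley--Weil gives the eigenvalue exponents $\{4,5,6\}$ on $H^{0}(C,K_{C})$ for the deck transformation $(x,y)\mapsto(x,\zeta y)$, hence $\{1,2,3\}$ on $H^{0,1}(C)$, as required; alternatively, argue as in the paper's Lemma \ref{5m1} that the character with eigenvalues $\zeta,\zeta^{2},\zeta^{3}$ satisfies $\widetilde{E}$ and invoke Kuribayashi's theorem (Theorem \ref{cr_e_p}) for prime order. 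A smaller incompleteness: your $N=3$ sample computation treats only the signature with $h=1$ and two fixed points; the $h=0$ signature with five fixed points also passes Riemann--Hurwitz, and its possible eigenvalue sets $\{1,1,1\}$, $\{1,1,2\}$, $\{1,2,2\}$, $\{2,2,2\}$ must be (and are) discarded, having ages $1$, $4/3$, $5/3$, $2$. With these repairs your argument goes through and yields the stated list.
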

} 

\begin{cor}\label{c1}Assume $g = 4$, then quantity of compact Riemann surfaces such that $J/G$ is uniruled is finite, if $g = 2$ or $g = 3$, then quantity of such curves is infinite.
\end{cor}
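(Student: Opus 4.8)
The plan is to derive all three assertions directly from the classification results already established, so that the only substantive work is to exhibit, in the genus $2$ and genus $3$ cases, explicit positive-dimensional families of curves meeting the relevant criterion. For $g = 4$ the finiteness is immediate: by Theorem~\ref{m2}, if $g = 4$ and $J/G$ is uniruled then $C$ must be isomorphic to one of the two explicitly listed curves $y^{3} = x(x^{5}-1)$ or $y^{2} = x(x^{9}-1)$. Hence there are at most two isomorphism classes, and the count is finite.

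For $g = 3$ I would single out case (1) of Theorem~\ref{m3}. An automorphism $\sigma$ of order $N = 2$ with eigenvalues $1, 1, -1$ on $H^{0,1}(C)$ is an involution whose $(+1)$-eigenspace has dimension $2$, so the quotient $C/\langle\sigma\rangle$ has genus $2$; Riemann--Hurwitz, $2\cdot 3 - 2 = 2(2\cdot 2 - 2) + R$, then forces $R = 0$, i.e. the cover $C \to C/\langle\sigma\rangle$ is \'etale. Conversely, every connected unramified double cover of a genus $2$ curve is a genus $3$ curve carrying such an involution: the invariant forms pull back from the base (giving the two $+1$ eigenvalues) and the single Prym differential gives the $-1$. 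Since genus $2$ curves move in a $3$-dimensional family and each admits at least one (in fact $2^{4}-1$) connected \'etale double cover, this produces infinitely many isomorphism classes of genus $3$ curves with $J/\langle\sigma\rangle$ uniruled.

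For $g = 2$ I would write down an explicit infinite family. Consider $C : y^{2} = x^{6} + a x^{4} + b x^{2} + c$ with the involution $\tau(x,y) = (-x, y)$; on the basis $dx/y,\ x\,dx/y$ of $H^{0,1}(C)$ one checks that $\tau$ acts with eigenvalues $-1, 1$. In particular $\tau$ is neither the hyperelliptic involution (which acts by $-1$ on all of $H^{0,1}(C)$) nor of order $3$ or $6$, so $\langle\tau\rangle$ is not among the exceptions of Theorem~\ref{b3}, and therefore $J/\langle\tau\rangle$ is uniruled. As $(a,b,c)$ varies these curves fill out a positive-dimensional family, yielding infinitely many isomorphism classes.

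The only genuine obstacle is to confirm that the families in the $g = 2$ and $g = 3$ cases are truly infinite rather than a sporadic list: whereas for $g = 4$ Theorem~\ref{m2} pins the curve down completely, for the lower genera one must verify that the defining condition---existence of an involution with prescribed eigenvalues---is a moduli condition satisfied along a whole family. The explicit equation for $g = 2$ and the \'etale-double-cover construction for $g = 3$ make this verification routine, so I expect no serious difficulty once the correct families are identified.
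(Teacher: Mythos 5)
Your proposal is correct, and for $g=4$ it coincides with the paper (you even cite the right theorem: the paper attributes the finiteness to Theorem~\ref{m1}, evidently a slip for Theorem~\ref{m2}, which pins the curve down to two isomorphism classes exactly as you say). For the two infinite cases, however, you take a genuinely different route. The paper exhibits, for $g=3$, the explicit hyperelliptic family $y^{2}=(x^{2}-1)(x^{2}-4)(x^{2}-9)(x^{2}-a^{2})$ with the involution $(x,y)\mapsto(-x,-y)$, whose eigenvalues on $H^{0}(C,K_{C})$ are $1,1,-1$ (case (1) of Theorem~\ref{m3}), and for $g=2$ the family $y^{2}=x(x^{2}-1)(x^{2}-a^{2})$ with the \emph{order-$4$} automorphism $(x,y)\mapsto(-x,\mathbf{i}y)$, which escapes the exceptions of Theorem~\ref{b3}; in both cases infinitude is proved by the branch-point argument: the branch locus on $\mathbb{P}^{1}$ is intrinsic to the curve, and a given branch set is carried to only finitely many others in the family by coordinate changes of $\mathbb{P}^{1}$. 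Your $g=3$ argument instead realizes case (1) of Theorem~\ref{m3} conceptually via Prym theory: involutions with eigenvalues $1,1,-1$ are exactly the deck transformations of connected \'etale double covers of genus $2$ curves, so the locus is swept out by the $3$-dimensional moduli of genus $2$ curves (times the $15$ nonzero $2$-torsion classes). This buys a description of the \emph{entire} uniruled locus in that case rather than one curve through it, but it needs one small step you should make explicit: a fixed genus $3$ curve has finite automorphism group, hence finitely many involutions and finitely many genus $2$ quotients, so infinitely many base curves really do yield infinitely many non-isomorphic covers. Your $g=2$ family uses a non-hyperelliptic involution with eigenvalues $-1,1$ where the paper uses an order-$4$ automorphism; both avoid the exceptional list of Theorem~\ref{b3} equally well, and the infinitude verification you defer as ``routine'' is precisely the paper's branch-point argument (the six branch points of your curve are the square roots of the roots of $t^{3}+at^{2}+bt+c$), so it goes through. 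Incidentally, the paper's own proof twice writes ``not uniruled'' where the constructions plainly establish ``uniruled''; your write-up states the conclusions with the correct sign.
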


The most symmetric curves of genus  $3$ and $4$ are Klein's quartic and Bring's curve. Bring's curve is the curve cut out by the homogeneous equations $$u + v + w + t + s = u^{2} + v^{2} + w^{2} + t^{2}+ s^{2}  = u^{3} + v^{3} + w^{3} + t^{3} + s^{2} = 0$$ in $\mathbb{P}^{4}$. This is the only curve of genus $g = 4$ with  $S_{5}$ group action (see \cite{W}).
Klein's quartic is defined by the following quartic equation $$u^{3}v + v^{3}w + w^{3}u = 0$$ in $\mathbb{P}^{2}$. This is the only curve of genus $g = 3$ with automorphism group $\mathrm{PSL}_{2}(\mathbb{F}_{7})$ (see \cite{Klein}).

\begin{cor}\label{B_K}
The following assertions hold
\begin{enumerate} 
\item{The quotient of Jacobian of Bring's curve by its automorphism group is not uniruled.}
\item{The quotient of Jacobian of Klein's curve by its automorphism group is not uniruled.}
\end{enumerate}
\end{cor}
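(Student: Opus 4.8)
The plan is to read off, from Theorems~\ref{m2} and~\ref{m3}, exactly which data (element orders, and in the genus~$3$ case the eigenvalues on $H^{0,1}(C)$) force $J/G$ to be uniruled, and then to check that the two highly symmetric curves fail to supply these data. In both cases $G$ is the full automorphism group, known from \cite{W} and \cite{Klein}, so the whole problem reduces to (i) determining the set of element orders of $G$ and (ii) where relevant, computing how the pertinent automorphisms act on the three-dimensional space of holomorphic differentials.

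For Bring's curve, which has genus $4$ and $\mathrm{Aut}(C)\cong S_5$, I would invoke the ``only if'' direction of Theorem~\ref{m2}: for $g=4$ the quotient is uniruled only if $G$ contains an element of order $15$ or $18$. The orders of elements of $S_5$ are $1,2,3,4,5,6$ (the possible cycle types of a permutation of five letters), so $S_5$ has no element of order $15$ or $18$. Hence the criterion of Theorem~\ref{m2} cannot be met and $J/G$ is not uniruled. One may add as a sanity check that the two curves singled out in Theorem~\ref{m2} each carry a cyclic group of order $15$ or $18$, plainly incompatible with the $S_5$-symmetry of Bring's curve.

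For Klein's quartic, of genus $3$ with $\mathrm{Aut}(C)\cong \mathrm{PSL}_2(\mathbb{F}_7)$ of order $168=2^{3}\cdot 3\cdot 7$, I would first list the element orders of this group, namely $1,2,3,4,7$. Comparing with the list $\{2,7,8,9,12,14\}$ appearing in Theorem~\ref{m3}, only the orders $2$ and $7$ can possibly occur, so it suffices to rule out the eigenvalue patterns attached to these two orders. The key input is that $H^{0,1}(C)\cong H^0(C,K_C)$ is, for a plane quartic, the restriction of $H^0(\mathbb{P}^2,\mathcal{O}(1))$, i.e.\ the space of linear forms, so the $G$-action on it is one of the two $3$-dimensional irreducible representations of $\mathrm{PSL}_2(\mathbb{F}_7)$, whose character I would take from the character table. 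An involution has trace $-1$ on this representation, forcing eigenvalues $\{1,-1,-1\}$, which differs from the pattern $\{1,1,-1\}$ required in case~(1) of Theorem~\ref{m3}. An element of order $7$ has trace $\tfrac{-1\pm\sqrt{-7}}{2}$, the Gaussian period $\zeta+\zeta^{2}+\zeta^{4}$ (or its conjugate), so its eigenvalues are $\{\zeta,\zeta^{2},\zeta^{4}\}$ indexed by the quadratic residues modulo $7$; this is not the pattern $\{\zeta,\zeta^{2},\zeta^{3}\}$ of case~(2), and no relabelling by a primitive root turns the exponent set $\{1,2,4\}$ into $\{1,2,3\}$. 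Therefore no element of $\mathrm{PSL}_2(\mathbb{F}_7)$ satisfies the hypotheses of Theorem~\ref{m3}, and $J/G$ is not uniruled.

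The main obstacle I anticipate is purely the eigenvalue bookkeeping in the genus~$3$ case: one must correctly identify which $3$-dimensional representation of $\mathrm{PSL}_2(\mathbb{F}_7)$ is realized on the differentials and keep track of the (harmless here) complex conjugation relating $H^{0,1}(C)$ to $H^0(C,K_C)$. Because the decisive comparisons reduce to traces --- $-1$ versus the trace $1$ of the pattern $(1,1,-1)$, and a quadratic-residue Gaussian period versus $\zeta+\zeta^{2}+\zeta^{3}$ --- the verification is insensitive to these choices, so the only genuine work is confirming the element-order lists of $S_5$ and $\mathrm{PSL}_2(\mathbb{F}_7)$ and reading off the relevant character values. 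The corollary then follows immediately from the two main theorems.
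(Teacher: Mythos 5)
Your proposal is correct, and for Bring's curve it coincides with the paper's proof (the element orders of $S_5$ are $1,2,3,4,5,6$, so the criterion of Theorem \ref{m2} cannot be met). For Klein's quartic the overall reduction is likewise the same --- only orders $2$ and $7$ survive comparison with the list in Theorem \ref{m3} --- but your verification of the two eigenvalue patterns is genuinely different: you read traces off the character table of $\mathrm{PSL}_2(\mathbb{F}_7)$, whereas the paper argues geometrically. For order $2$ the paper observes that an involution of $\mathbb{P}^2$ fixes a line, which meets the quartic, so $\sigma$ has a fixed point, while eigenvalues $1,1,-1$ would force $|\mathrm{Fix}_C(\sigma)|=0$ by the Lefschetz formula; for order $7$ it takes $\sigma(u,v,w)=(\zeta u,\zeta^4 v,\zeta^2 w)$ and exhibits an explicit eigenbasis of $H^0(C,K_C)$, getting eigenvalues $\zeta,\zeta^2,\zeta^4$. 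Your character-theoretic route buys something real: since $H^0(C,K_C)$ is one of the two $3$-dimensional irreducibles, the trace values $-1$ (involutions) and $\frac{-1\pm\sqrt{-7}}{2}$ (order $7$) determine the eigenvalue multisets for \emph{every} element of those orders at once, covering both conjugacy classes of order-$7$ elements; this quietly repairs the paper's inaccurate remark that elements of equal order are conjugate because the group is simple ($\mathrm{PSL}_2(\mathbb{F}_7)$ in fact has two classes of order $7$, exchanged by inversion --- the paper's check still suffices only because the powers $\sigma^k$ have exponent sets $\{k,2k,4k\}$, never $\{1,2,3\}$, which is precisely your relabelling observation). Conversely, the paper's computation is independent of the character table and produces actual eigenvectors, while your argument needs the standard but external identification of the canonical representation as a $3$-dimensional irreducible, together with the duality/conjugation ambiguity that you correctly dismiss as harmless: both candidate exponent sets $\{1,2,4\}$ and $\{3,5,6\}$ avoid $\{1,2,3\}$, and trace $-1$ forces eigenvalues $\{1,-1,-1\}\neq\{1,1,-1\}$ either way.
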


{Quotients of Jacobians by their automorphism groups have been studied already. For instance, in \cite{MM} it is shown that the quotient of Jacobian of Klein's curve by group of order~$336$ is isomorphic to weighted projective space $\mathbb{P}(1, 2, 4, 7)$. Note that the second assertion of the Corollary \ref{B_K} claims that quotient is not uniruled under the action of subgroup or order $168$.}

{There is a list of all automorphism groups of compact Riemann surface of genus $4$ in \cite{KK} (\cite[Proposition 2.1]{KK}). Nevertheless, detailed proof of this fact is omitted. We will not use the result of this work and we will prove Theorem \ref{m2} regardless of this list. Additionally, there is a list of automorphism groups of compact Riemann surface of genus $3$ in \cite{KK} (\cite[Proposition 1.1]{KK}). We will not use it in the proof of  \ref{m3}.}

{In Section \ref{background} we will provide some preliminary information needed in theorem proofs. In Section \ref{big} we will prove Theorem \ref{m1}; the proof follows proof of Theorem \ref{b1}, provided in \cite{AB}, but with more accurate estimates on eigenvalues of action of first cohomology group of compact Riemann surface. In Section \ref{four} we will prove Theorem \ref{m2}. In Section \ref{three} we will prove Theorem \ref{m3}. In Section \ref{Cor} we will prove corollaries \ref{c1} and \ref{B_K}.}

\section{Background}\label{background}

It is well known that every irreducible (non-campact) Riemann surface has the only one non-singular irreducible compactification. All our Riemann surfaces are compact and irreducible; when we say that Riemann surface $C$ is defined by equations in affine space, we mean that $C$ is a smooth irreducible compactification of non-compact Riemann surface, defined by these equations.

\begin{defin} \textnormal{Let $G$ be a finite group, and let $(\rho, V)$ be a $g$-dimensional representation of $G$. We say that representation $\rho : G \rightarrow GL(V)$ satisfies the \textit{local Reid condition} if $$a_{1}+\ldots+a_{g} \geq r,$$ where $r$ is order of $\sigma$, and $e^{\frac{2\pi \mathbf{i} a_{i}}{r}}$ are eigenvalues $\rho(\sigma)$ and $0 \leq a_{i} < r$,  for every $\sigma$ in $G$.}\end{defin}

\begin{defin} \textnormal{Let $G$ be a finite group acting on a smooth projective variety $X$. We say that the $G$-action satisfies the \textit{global Reid condition} if for every  $x$ from $X$, the representation of stabilizer of $x$ in $T_{x}(X)$ satisfies the local Reid condition.}
\end{defin}


\begin{theorem}[\protect{\cite[Theorem 2]{KL}}]\label{kl1} Let $X$ be  a smooth variety with trivial canonical bundle and $G$ a finite group acting on $X$. The following are equivalent:
 \begin{enumerate}
\item{The G-action satisfies the global Reid condition ;}
\item{$X/G$ is not uniruled.}
\end{enumerate}
\end{theorem}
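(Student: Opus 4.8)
The plan is to translate both conditions into statements about the canonical class of $Y := X/G$ and then to invoke the characterization of uniruledness through non-pseudoeffectivity of the canonical class. Write $\pi\colon X\to Y$ for the quotient morphism; since $G$ is finite and we work in characteristic zero (and, as in our applications, $X$ is projective), $Y$ is normal and $K_Y$ is $\mathbb{Q}$-Cartier. First I would reduce the global Reid condition to a purely geometric statement about the singularities of $Y$ and the ramification of $\pi$. Fix $x\in X$ with stabilizer $H=\mathrm{Stab}_G(x)$. Averaging a local chart over $H$ linearizes the $H$-action, so analytically near the image of $x$ the germ of $Y$ is isomorphic to the quotient $T_x(X)/H$ of the linear representation of $H$ on the tangent space; thus the singularities of $Y$ are exactly the quotient singularities determined by the representations appearing in the global Reid condition.

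By the Reid--Tai criterion, for a small group (one with no quasi-reflections) the quotient $T_x(X)/H$ has canonical singularities precisely when $a_1+\cdots+a_g\ge r$ (age at least one) for every element, which is the local Reid condition; and an element acting as a quasi-reflection fixes a hyperplane, always violates $a_1+\cdots+a_g\ge r$, and corresponds to codimension-one ramification of $\pi$. Hence the global Reid condition is equivalent to the conjunction: $\pi$ is \'etale in codimension one (no stabilizer element is a quasi-reflection) \emph{and} $Y$ has canonical singularities. I would then compute $K_Y$ from the ramification formula $K_X=\pi^{\ast}K_Y+R$ with $R\ge 0$ the ramification divisor. Since $K_X$ is trivial, $\pi^{\ast}K_Y=-R$. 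If the global Reid condition holds then $R=0$, so $\pi^{\ast}K_Y\equiv 0$, and finiteness of $\pi$ forces $K_Y$ to be numerically trivial, in particular pseudoeffective. If instead some stabilizer contains a quasi-reflection, then $R$ is a nonzero effective divisor, so $\pi^{\ast}K_Y=-R$ has strictly negative intersection with the $(\dim X-1)$-st power of an ample class, whence $K_Y$ is not pseudoeffective.

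Finally I would run the dichotomy on a resolution $f\colon Y'\to Y$. If the global Reid condition holds, then $Y$ is canonical, so $K_{Y'}=f^{\ast}K_Y+\sum a_i E_i$ with all $a_i\ge 0$; as $f^{\ast}K_Y\equiv 0$ this makes $K_{Y'}$ effective, hence pseudoeffective, so $Y'$, and therefore $Y=X/G$ (uniruledness being a birational invariant), is not uniruled. If the condition fails there are two cases: when it fails through a quasi-reflection, $K_Y$ is already not pseudoeffective; when $\pi$ is \'etale in codimension one but $Y$ is not canonical, some discrepancy $a_i<0$ occurs while $f^{\ast}K_Y\equiv 0$, so $K_{Y'}\equiv\sum a_i E_i$ is $f$-exceptional with a negative coefficient and is therefore not pseudoeffective by the negativity lemma. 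In either case $K_{Y'}$ is not pseudoeffective.

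The main obstacle is then exactly the deep direction of the theorem of Boucksom--Demailly--P\u{a}un--Peternell: a smooth projective variety whose canonical class is not pseudoeffective is uniruled. Applying it to $Y'$ shows that $Y'$, and hence $X/G$, is uniruled, which closes the equivalence. The easy direction of the same theorem --- a uniruled variety has non-pseudoeffective canonical class --- is what underlies every ``not uniruled'' conclusion above, so I would isolate that result as the conceptual engine of the whole argument, with the Reid--Tai computation, the ramification formula, and the negativity lemma serving as the technical bookkeeping that packages the local eigenvalue data into a global statement about $K_Y$.
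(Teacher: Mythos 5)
There is nothing in the paper to compare against: Theorem \ref{kl1} is imported verbatim from Koll\'ar--Larsen \cite{KL} and used as a black box, with no internal proof. What you have written is therefore a reconstruction of the source's argument, and in outline it is the correct one: Reid--Tai translates the global Reid condition into ``$\pi$ \'etale in codimension one and $Y=X/G$ canonical'', the ramification formula $K_X=\pi^{*}K_Y+R$ with $K_X$ trivial reduces everything to pseudoeffectivity of canonical classes, and the deep implication ``$K_{Y'}$ not pseudoeffective $\Rightarrow$ $Y'$ uniruled'' is exactly Boucksom--Demailly--P\u{a}un--Peternell, which is the engine of the Koll\'ar--Larsen proof as well. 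Your identification of quasi-reflections as simultaneously the violations of age $\geq 1$ and the source of codimension-one ramification is also correct, and the hypothesis you add parenthetically (projectivity of $X$) is genuinely needed for the BDPP step and for intersecting against powers of an ample class, so it should be stated, not whispered.

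Two steps need repair. First, in the quasi-reflection case you establish that $K_Y$ is not pseudoeffective and then assert ``in either case $K_{Y'}$ is not pseudoeffective''; the missing (easy) link is that $f_{*}$ of a pseudoeffective class is pseudoeffective, so $K_{Y'}$ psef would force $K_Y=f_{*}K_{Y'}$ psef. Second, and more substantively, the claim that $K_{Y'}\equiv\sum a_iE_i$ with some $a_i<0$ is not pseudoeffective does \emph{not} follow from the negativity lemma as you cite it: that lemma requires $-K_{Y'}$ to be $f$-nef, which you do not have, and one cannot substitute curves sweeping out a component of the negative part, since pseudoeffective classes may meet such curves negatively (an effective exceptional divisor is psef yet has negative degree on curves covering it). The statement you need --- an $f$-exceptional pseudoeffective divisor class is effective --- is true and standard, but it is Nakayama's lemma on the divisorial Zariski decomposition (the $\sigma$-decomposition of an exceptional psef class is all negative part), not the negativity lemma; with that citation corrected, your sketch closes. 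A last cosmetic point: you should note that $K_Y$ is $\mathbb{Q}$-Cartier because quotient singularities are klt, which you use when writing $\pi^{*}K_Y$ and $f^{*}K_Y$.
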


\begin{lemma}
Let $\sigma$ be a non-trivial automorphism of finite order of a compact Riemann surface $C$ of genus $g \geq 1$, then eigenvalues $\sigma$ on $H^{0, 1}(C)$ are not equal to unity.
\end{lemma}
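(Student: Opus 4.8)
The plan is to reduce the statement about $H^{0,1}(C)$ to one about $\sigma$-invariant holomorphic one-forms, where the action is geometric and easier to control. Using the Dolbeault isomorphism $H^{0,1}(C)\cong H^{1}(C,\mathcal{O}_{C})$ and the Hodge decomposition $H^{1}(C,\mathbb{C})=H^{1,0}(C)\oplus H^{0,1}(C)$, I would note that complex conjugation interchanges the two summands and, commuting with the real operator $\sigma^{*}$, sends the $\sigma$-eigenvalue $\lambda$ on one summand to $\overline{\lambda}$ on the other. Since $\sigma$ has finite order its eigenvalues are roots of unity, and the value $1$ is fixed by conjugation; hence $1$ is an eigenvalue of $\sigma$ on $H^{0,1}(C)$ if and only if it is an eigenvalue on $H^{1,0}(C)=H^{0}(C,K_{C})$, i.e. if and only if there is a nonzero holomorphic one-form $\omega$ with $\sigma^{*}\omega=\omega$. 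So the whole point is to exclude a nonzero $\sigma$-invariant holomorphic one-form.

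Next I would pass to the quotient $\pi\colon C\to C'=C/\langle\sigma\rangle$. Pullback gives an inclusion $\pi^{*}\colon H^{0}(C',K_{C'})\hookrightarrow H^{0}(C,K_{C})$ whose image is exactly the invariant subspace: a $\sigma$-invariant one-form descends to a holomorphic one-form on the (smooth) quotient, so that $\dim H^{0}(C,K_{C})^{\sigma}=g(C')$. Therefore the eigenvalue $1$ is absent from $H^{0,1}(C)$ precisely when the quotient curve has genus $0$, and the content of the lemma is the vanishing $g(C')=0$. This is the step I would concentrate on.

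To attack $g(C')=0$ I would write down Riemann–Hurwitz for the degree-$N$ cover $\pi$,
\[
2g-2 = N\bigl(2g'-2\bigr) + \sum_{p\in C}\bigl(e_{p}-1\bigr),
\]
and combine it with the local description of $\sigma$ near its fixed points, where $\sigma$ acts by a primitive root of unity since it is nontrivial of finite order. The main obstacle is genuinely this last step: Riemann–Hurwitz alone does not pin down $g'$, so controlling the ramification of $\pi$ tightly enough to force $g'=0$ is delicate, and it is exactly here that the full hypotheses on $\sigma$ and on $C$ must enter rather than the numerical formula by itself. I expect the bulk of the work, and any case analysis, to be concentrated in establishing this vanishing.
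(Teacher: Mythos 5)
Your proposal founders not on technique but on the reading of the statement, and your own reduction makes the problem visible. Taken literally (``no eigenvalue of $\sigma$ on $H^{0,1}(C)$ equals $1$'') the lemma is false, and your chain of equivalences proves it: you correctly show that $1$ occurs as an eigenvalue exactly when $g(C/\langle\sigma\rangle)\geq 1$, and there are plenty of nontrivial finite-order automorphisms with quotient of positive genus. For instance $y^{2}=x^{6}-1$ with $\sigma(x,y)=(-x,y)$ has the invariant form $x\,dx/y$ and elliptic quotient; closer to home, the paper's own Theorem \ref{m3} lists as a \emph{realized} case a genus-$3$ involution with eigenvalues $1,1,-1$. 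So the vanishing $g(C')=0$ that you isolate as ``the content of the lemma'' is not a theorem, and the Riemann--Hurwitz analysis you defer to the end is not a delicate step but an impossible one: no control of ramification will force $g'=0$, because it simply need not vanish. (Your preliminary steps --- conjugation interchanging $H^{1,0}$ and $H^{0,1}$, invariant forms being pullbacks from the quotient, $\dim H^{0}(C,K_{C})^{\sigma}=g(C')$ --- are all correct; the target they lead to is wrong.)

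What the lemma actually asserts, as the paper's proof and its later use make clear, is that the eigenvalues of $\sigma$ on $H^{0,1}(C)$ are not \emph{all} equal to $1$, i.e.\ $\sigma$ does not act as the identity there. The paper's argument is short: by the Torelli theorem $\sigma$ acts nontrivially on the Jacobian $J$; a finite-order automorphism of $J$ fixes $0$ and is determined by its linearization at that fixed point (the Identity Theorem cited from Akhiezer), so if it were trivial on $T_{0}J\cong H^{0,1}(C)$ it would be trivial on $J$, a contradiction. Note that your machinery, aimed at the correct statement, would also close quickly for $g\geq 2$: all eigenvalues equal to $1$ forces $g(C')=g$ by your invariant-forms computation, and then Riemann--Hurwitz gives $2g-2\geq N(2g-2)$ with $N\geq 2$, absurd; alternatively the Lefschetz fixed point formula yields $|\mathrm{Fix}_{C}(\sigma)|=2-2g<0$ directly. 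Either route is an acceptable substitute for the paper's Torelli argument in genus at least $2$, but only once the statement is read correctly.
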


\begin{proof}
Action of $\sigma$ on Jacobian is non-trivial by the Torelli Theorem. The element of finite order, which acts on the Jacobian, fixes zero point, \cite[\S 2.2, Identity Theorem]{A}. We identify tangent space at zero with $H^{0, 1}(C)$. It means that eigenvalues cannot be equal to unity.
\end{proof}

Further we use this theorem without mentioning it.

\begin{theorem}[\protect{\cite[Corollary 9.6]{TB}}]\label{order}
The maximum order of an automorphism of a compact Riemann surface of genus $g \geq 2$ is $4g + 2$.
\end{theorem}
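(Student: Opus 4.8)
The plan is to reprove this classical Wiman bound by Riemann--Hurwitz, since the only automorphisms that matter here are the powers of a single $\sigma$. First I would reduce to the cyclic case: if $\sigma$ has order $N$, consider the cyclic group $H = \langle \sigma \rangle$ and the quotient map $\pi : C \to C' := C/H$, where $C'$ is a compact Riemann surface of some genus $g'$. Writing $m_1, \dots, m_k$ for the orders of the (cyclic) stabilizers at the $k$ branch points, Riemann--Hurwitz reads
$$\frac{2g-2}{N} = (2g'-2) + \sum_{j=1}^{k}\Bigl(1 - \frac{1}{m_j}\Bigr) =: R,$$
so maximizing $N$ amounts to minimizing the positive quantity $R$. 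Note that $N \le 4g+2$ is equivalent to $R \ge \frac{g-1}{2g+1}$, and since $\frac{g-1}{2g+1} < \frac12$ for every $g \ge 2$, any configuration with $R \ge \frac12$ is automatically harmless.

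Next I would dispose of the easy signatures. If $g' \ge 2$ then $R \ge 2$; if $g' = 1$ then a nontrivial action on a surface of genus $g \ge 2$ forces at least one branch point, so $R \ge \frac12$; if $g' = 0$ one needs $k \ge 3$, and for $k \ge 5$ one has $R \ge k/2 - 2 \ge \frac12$. In all these cases $R \ge \frac12$, hence $N \le 4g-4 < 4g+2$. This leaves only the two genuinely extremal signatures: $g'=0$ with $k = 3$ or $k = 4$.

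The heart of the argument is that the cyclic structure of $H$ rules out the Hurwitz-type configurations (such as $(2,3,7)$) that would otherwise give a far larger group. For $k=3$ I would encode the branch data by the local monodromies $c_1,c_2,c_3 \in \mathbb{Z}/N$, which must satisfy $c_1+c_2+c_3 \equiv 0$, generate $\mathbb{Z}/N$, and have $m_j = N/\gcd(c_j,N)$. Setting $a = N/m_1$, $b = N/m_2$, $c = N/m_3$, the generation/connectedness condition translates into $a,b,c$ being pairwise coprime divisors of $N$, so that $abc \mid N$ and hence $N \ge abc$. A short computation ($2g-2 = N - a - b - c$) shows $N \le 4g+2$ is equivalent to $2(a+b+c) - 6 \le N$, and this follows from $N \ge abc \ge 2(a+b+c)-6$, the last inequality being elementary (a case check reducing to $(a-2)(b-2)\ge 0$). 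Equality pins down $(a,b,c) = (2g+1,2,1)$, i.e.\ the signature $(0;\,2,\,2g+1,\,4g+2)$, realized by $y^2 = x^{2g+1}-1$ with $\sigma(x,y) = (\zeta x, -y)$, $\zeta = e^{2\pi \mathbf{i}/(2g+1)}$. For $k=4$ the key observation is that a cyclic group has a unique element of order $2$; combined with $\sum c_j \equiv 0$, this forces every signature with $R < \frac12$ to be $(2,2,3,3)$, for which $N = \mathrm{lcm}(2,2,3,3) = 6$ and $g = 2$, well within the bound.

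The main obstacle I anticipate is making the cyclic realizability conditions precise and verifying that they indeed exclude every troublesome signature --- in particular checking, for $k=4$, that the monodromy-sum condition together with uniqueness of the order-$2$ element eliminates all of $(2,2,2,m)$ and $(2,2,3,m)$ with $m \ge 4$. This bookkeeping is exactly the point where the bound $4g+2$ (rather than the Hurwitz bound $84(g-1)$ valid for arbitrary groups) emerges.
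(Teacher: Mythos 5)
The paper itself gives no proof of this statement---it is quoted directly from Breuer \cite[Corollary 9.6]{TB}---so your proposal is measured against the classical Wiman--Harvey argument, which is exactly the route you take: Riemann--Hurwitz for $C \to C/\langle\sigma\rangle$, reduction to quotient genus $g'=0$ with $k=3$ or $4$ branch points, and then the arithmetic constraints that cyclic monodromy imposes. Your reduction (the cases $g'\ge 2$, $g'=1$, and $g'=0$ with $k\ge 5$ all give $R \ge \tfrac12$) is correct, and your $k=4$ analysis is sound: the unique involution of a cyclic group together with $\sum c_j \equiv 0 \pmod N$ does collapse $(2,2,2,m)$ to $(2,2,2,2)$ (genus $1$, excluded) and $(2,2,3,m)$ to $(2,2,3,3)$ with $N=6$, $g=2$. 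One small imprecision: pairwise coprimality of $a,b,c$ does not follow from generation alone (which only gives $\gcd(a,b,c)=1$); you also need the relation $c_1+c_2+c_3\equiv 0$, which shows that a prime dividing two of the $\gcd(c_j,N)$ divides the third.

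There is, however, a genuine false link in the central chain of the $k=3$ case: the inequality $abc \ge 2(a+b+c)-6$ \emph{fails} whenever two of the pairwise coprime parameters equal $1$ and the third is at least $3$. For instance $(a,b,c)=(5,1,1)$ gives $abc = 5 < 8 = 2(a+b+c)-6$, and this branch datum is realizable---take $N=15$ with local monodromies $(5,2,8)$---so it cannot be excluded a priori; your case check $(a-2)(b-2)\ge 0$ only covers the sub-case in which exactly one parameter equals $1$. The desired conclusion $N \ge 2(a+b+c)-6$ is still true for such data, but not via your chain, so as written the proof would fail on admissible signatures. The repair is local: if two parameters equal $1$, say $b=c=1$, use instead that the remaining point is genuinely ramified, $m_1 \ge 2$, i.e.\ $a = N/m_1 \le N/2$, whence $N \ge 2a > 2(a+1+1)-6$; if all three are $\ge 2$, pairwise coprimality forces the product of the two smallest to be at least $6$, so $abc \ge 6\max(a,b,c) \ge 2(a+b+c)$, strictly better than needed. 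With this repair the equality analysis survives unchanged: $N = 4g+2$ occurs only in the sub-case $c=1$, $a=2$, $N=ab=2b$, giving $b=2g+1$ and the signature $(0;\,2,\,2g+1,\,4g+2)$, realized by Wiman's curve as you state (and consistent with Theorem \ref{max_ord} of the paper, whose model $y^2 = x(x^{2g+1}-1)$ is isomorphic to your $y^2 = x^{2g+1}-1$ via $x \mapsto 1/x$ on the branch loci).
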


\begin{theorem}[\protect{\cite[V.2.11]{FK}}]\label{fk} Let $\sigma$ be automorphism of prime order of a compact Riemann surface $C$ of genus $g$. If $\sigma$ has fixed point, then it has at least two fixed points.
\end{theorem}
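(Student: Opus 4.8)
The plan is to pass to the quotient and reformulate the statement as a constraint on monodromy. Write $p$ for the prime order of $\sigma$, set $C' = C/\langle\sigma\rangle$, and let $\pi\colon C \to C'$ be the quotient map, a Galois cover with group $\mathbb{Z}/p$. Since $p$ is prime, the stabilizer of any point of $C$ is either trivial or all of $\langle\sigma\rangle$; hence every fixed point of $\sigma$ is totally ramified over $C'$, and each branch point of $\pi$ has exactly one preimage. Consequently the number $r$ of fixed points of $\sigma$ equals the number of branch points of $\pi$, and it suffices to show that $r \ne 1$.

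Next I would strip off the branch locus. Over $C'^{\circ} = C' \setminus \{Q_1,\dots,Q_r\}$ the cover is unramified, so it is classified by a surjective homomorphism $\phi\colon \pi_1(C'^{\circ}) \twoheadrightarrow \mathbb{Z}/p$; as the target is abelian, $\phi$ factors through $H_1(C'^{\circ},\mathbb{Z})$. The key local statement is that a point $Q_j$ is a genuine branch point of $\pi$ precisely when the image $\phi(\gamma_j)$ of a small loop $\gamma_j$ encircling $Q_j$ is nonzero. The main step is then immediate: if $r = 1$, the single loop $\gamma_1$ is nullhomologous in $C'^{\circ}$, since it bounds the once-punctured surface (equivalently, it is homologous to a product of commutators of the generators coming from the genus of $C'$). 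Hence $\phi(\gamma_1) = 0$, which contradicts the requirement $\phi(\gamma_1) \ne 0$ needed for $Q_1$ to be a branch point. Therefore $r \geq 1$ forces $r \geq 2$.

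The points requiring care, rather than a genuine obstacle, are two: justifying total ramification at every fixed point, which is exactly where primality of the order enters, and the clean equivalence ``branching $\iff$ nonzero local monodromy.'' I would also emphasize that the Riemann--Hurwitz formula alone does not suffice: from $2g - 2 = p(2g' - 2) + r(p-1)$, where $g'$ is the genus of $C'$, setting $r = 1$ yields $2g - 1 = p(2g' - 1)$, which is impossible for $p = 2$ by parity but is numerically admissible for odd $p$. The extra input that excludes $r = 1$ uniformly is precisely the homological vanishing of the loop around a single puncture. Equivalently, one may phrase the contradiction through the relation $\prod_{i=1}^{g'}[\alpha_i,\beta_i]\,\prod_{j=1}^{r}\gamma_j = 1$ in $\pi_1(C'^{\circ})$, which upon applying $\phi$ gives $\sum_{j=1}^{r}\phi(\gamma_j) = 0$ in $\mathbb{Z}/p$; a single nonzero summand cannot satisfy this, so $r = 1$ is ruled out.
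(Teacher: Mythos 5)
The paper offers no proof of this statement at all: it is imported directly from Farkas--Kra as \cite[V.2.11]{FK}, so there is no internal argument to compare yours against. Your proof is correct and complete, and it is the standard covering-space proof of this fact. The two points you flag as needing care are exactly the right ones, and you handle both correctly: primality of $p$ forces every nontrivial stabilizer to be all of $\langle\sigma\rangle$, so fixed points of $\sigma$, totally ramified points, and (single-point) fibres over branch points all coincide, turning the claim into ``a connected $\mathbb{Z}/p$-cover cannot have exactly one branch point''; and the equivalence between genuine branching at $Q_j$ and $\phi(\gamma_j)\neq 0$ is the correct local statement, since $\phi(\gamma_j)=0$ would make the cover extend unramified over $Q_j$. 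The closing step is sound: applying $\phi$ to the relation $\prod_{i=1}^{g'}[\alpha_i,\beta_i]\prod_{j=1}^{r}\gamma_j=1$ kills the commutators and yields $\sum_{j=1}^{r}\phi(\gamma_j)=0$ in $\mathbb{Z}/p$, which a single nonzero summand cannot satisfy. Your observation that Riemann--Hurwitz alone is insufficient is also accurate and worth making: $2g-1=p(2g'-1)$ is ruled out by parity only for $p=2$ (e.g.\ $p=3$, $g'=1$, $g=2$ passes the numerical test), so the homological vanishing of the loop around a single puncture is genuinely needed. Note, finally, that your argument proves slightly more than the stated theorem: any nontrivial Galois cover with \emph{abelian} group has either zero or at least two branch points, since each local monodromy at a branch point is nonzero and they sum to zero in the abelian group; primality is used only to identify the fixed points of $\sigma$ with the branch points of the quotient map.
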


\begin{theorem}[\protect{\cite[Lemma 5]{FB}}]\label{fb} Let $\sigma$ be an automorphism of prime order of a compact Riemann surface $C$ of genus $g$. Then either $p \leq g$ or  $p = g + 1$ or $p = 2g + 1$. \end{theorem}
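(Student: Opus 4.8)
The plan is to analyze the cyclic quotient map $\pi : C \to C' := C/\langle\sigma\rangle$ through the Riemann--Hurwitz formula. Write $p$ for the prime order of $\sigma$ and $g'$ for the genus of $C'$. Since $p$ is prime, the stabilizer of any point of $C$ is either trivial or all of $\langle\sigma\rangle$; hence every ramification point is totally ramified, with ramification index exactly $p$, and each branch point on $C'$ has a single preimage. If $k$ denotes the number of branch points of $\pi$, Riemann--Hurwitz reads
$$2g - 2 = p(2g' - 2) + k(p - 1).$$
The first step is to record the combinatorial constraint $k \neq 1$. Because $C$ is connected, $\pi$ is a connected $\mathbb{Z}/p$-Galois cover, so the monodromy homomorphism $\pi_1(C' \setminus B) \to \mathbb{Z}/p$ sends each loop around a branch point to a nonzero element, while the defining relation in $\pi_1$ forces the sum of these elements to vanish in $\mathbb{Z}/p$; a single nonzero summand cannot vanish, so $k = 0$ or $k \geq 2$.

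Next I would split into cases according to $g'$. If $g' \geq 2$ then $2g - 2 \geq p(2g' - 2) \geq 2p$, giving $p \leq g - 1 < g$. If $g' = 1$ then $2g - 2 = k(p-1)$; the value $k = 0$ forces $g = 1$, and otherwise $k \geq 2$ yields $2g - 2 \geq 2(p - 1)$, i.e.\ $p \leq g$. In both situations the conclusion $p \leq g$ holds, so these cases contribute nothing beyond the main branch of the statement.

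The remaining and decisive case is $g' = 0$, where the formula becomes $2g - 2 = (k - 2)p - k$, so that $k \geq 3$ (the values $k=0,2$ force $g\le 0$) and
$$p = 1 + \frac{2g}{k - 2}.$$
Reading off the small values of $k$ produces exactly the three exceptional possibilities: $k = 3$ yields $p = 2g + 1$, and $k = 4$ yields $p = g + 1$, while $k \geq 5$ forces $p \leq 1 + \tfrac{2g}{3} \leq g$ for $g \geq 3$ (and for $g = 2$ the only integral prime value in this range is $p = 2 \leq g$). Assembling the three cases gives $p \leq g$, $p = g + 1$, or $p = 2g + 1$, as claimed.

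I expect the main obstacle to be the justification of $k \neq 1$: the Riemann--Hurwitz identity alone admits the spurious solution $(g', k) = (1,1)$, which would produce $p = 2g - 1$, a value not on the list; one genuinely needs the connectedness/monodromy argument to discard it. The other delicate points are purely bookkeeping---verifying that ramification is total (which is where primality of $p$ enters), that $g' = 0$ forces $k \geq 3$, and the integrality estimate separating $k = 3,4$ from $k \geq 5$.
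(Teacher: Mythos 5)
The paper offers no proof of this statement to compare against: it is imported verbatim as a citation to \cite[Lemma 5]{FB}, so your argument can only be judged on its own merits, and on those merits it is essentially the standard (and the cited source's) Riemann--Hurwitz proof, carried out correctly. Total ramification from primality of $p$, the exclusion of $k=1$ via the abelianized monodromy relation (the commutators die in $\mathbb{Z}/p$, so the local monodromies, each a generator by total ramification, must sum to zero), and the case split on $g' = g(C/\langle\sigma\rangle)$ are all sound; the arithmetic in the decisive case $g'=0$, namely $p = 1 + 2g/(k-2)$ with $k=3$ giving $p=2g+1$, $k=4$ giving $p=g+1$, and $k\geq 5$ forcing $p\leq g$ (with the separate check at $g=2$), is right.

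One genuine, though localized, slip: in the subcase $g'=1$, $k=0$ you assert that ``the conclusion $p \leq g$ holds.'' It does not. There the cover is \'etale, forcing $g=1$, and then $p$ is completely unconstrained: translation by a point of order $p$ on an elliptic curve realizes every prime, so the theorem as literally stated is false for $g=1$ (and for $g=0$, via $z \mapsto \zeta_p z$ on $\mathbb{P}^{1}$). The statement tacitly assumes $g \geq 2$ --- a hypothesis present in the source but dropped in the paper's transcription, and harmless in the paper since the result is only invoked for $g=3$. Once $g \geq 2$ is assumed, your problematic subcase is vacuous ($g'=1$, $k=0$ contradicts $g\geq 2$, just as $g'=0$, $k\leq 2$ forces $g\leq 0$), and your proof is complete; you should simply state that hypothesis explicitly rather than claim $p\leq g$ in a subcase where it fails.
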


\begin{theorem}[\protect{Lefschetz Fixed Point formula, \cite[Corollary 12.3]{TB}}] Let $\sigma$ be an automorphism of order $r$ of a compact Riemann surface $C$ of genus $g \geq 2$, and $\zeta^{a_{1}},\ldots , \zeta^{a_{g}}$ are its eigenvalues. Then 
$$\zeta^{a_{1}} + \ldots + \zeta^{a_{g}} + \zeta^{-a_{1}} + \ldots + \zeta^{-a_{g}} = 2 - |\mathrm{Fix}_{C}(\sigma)|,$$ 
where $\mathrm{Fix}_{C}(\sigma)$ is a set of fixed points of automorphism $\sigma$.\end{theorem}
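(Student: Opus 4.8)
The plan is to derive the stated identity from the topological Lefschetz fixed point theorem applied to the holomorphic automorphism $\sigma$ acting on the compact Riemann surface $C$. Recall that the Lefschetz number of $\sigma$ is the alternating sum of traces of $\sigma^{*}$ on singular cohomology,
$$L(\sigma) = \sum_{i=0}^{2}(-1)^{i}\,\mathrm{tr}\bigl(\sigma^{*} \mid H^{i}(C, \mathbb{C})\bigr),$$
and the theorem equates $L(\sigma)$ with the sum of local indices over the fixed points of $\sigma$. First I would compute the three traces. Since $C$ is connected, $\sigma$ acts trivially on $H^{0}(C, \mathbb{C}) \cong \mathbb{C}$, so that trace is $1$; and because $\sigma$ is holomorphic, hence orientation-preserving, it fixes the fundamental class and acts trivially on $H^{2}(C, \mathbb{C}) \cong \mathbb{C}$, giving trace $1$ as well.

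For the middle trace I would use the Hodge decomposition $H^{1}(C, \mathbb{C}) = H^{1,0}(C) \oplus H^{0,1}(C)$, which is preserved by $\sigma^{*}$. By hypothesis the eigenvalues of $\sigma$ on $H^{0,1}(C)$ are $\zeta^{a_{1}}, \ldots, \zeta^{a_{g}}$. Since $\sigma$ is a real map, $H^{1,0}(C)$ is the complex conjugate of $H^{0,1}(C)$, so the eigenvalues of $\sigma^{*}$ on $H^{1,0}(C)$ are the conjugates $\zeta^{-a_{1}}, \ldots, \zeta^{-a_{g}}$. Hence
$$\mathrm{tr}\bigl(\sigma^{*} \mid H^{1}(C, \mathbb{C})\bigr) = \zeta^{a_{1}} + \ldots + \zeta^{a_{g}} + \zeta^{-a_{1}} + \ldots + \zeta^{-a_{g}},$$
and combining the three traces gives $L(\sigma) = 2 - \bigl(\zeta^{a_{1}} + \ldots + \zeta^{a_{g}} + \zeta^{-a_{1}} + \ldots + \zeta^{-a_{g}}\bigr)$.

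The key step, which I expect to be the main point of the argument, is to show that every fixed point contributes exactly $+1$ to the sum of local indices, so that the geometric side of the Lefschetz theorem equals $|\mathrm{Fix}_{C}(\sigma)|$. Since $\sigma$ has finite order, one can average a local holomorphic coordinate at a fixed point $p$ to linearize the action; in a suitable chart $\sigma$ is then multiplication by a root of unity $\lambda \neq 1$ on the one-dimensional complex tangent space $T_{p}C$. The real local index of such a fixed point is the sign of $\det_{\mathbb{R}}(I - d\sigma_{p}) = |1 - \lambda|^{2} > 0$, which is $+1$; in particular every fixed point is isolated and non-degenerate, so the theorem applies and $L(\sigma) = |\mathrm{Fix}_{C}(\sigma)|$. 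Equating this with the expression from the previous paragraph and rearranging yields the stated formula. The only care needed is the linearization near fixed points and the positivity of the complex index, both of which follow directly from holomorphicity and finiteness of the order.
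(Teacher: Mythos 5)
The paper contains no internal proof of this statement to compare against: it is imported verbatim from Breuer \cite[Corollary 12.3]{TB}, where it sits inside the Eichler trace formula machinery --- the trace of $\sigma$ on holomorphic differentials is $1+\sum_{u} f_{u}\frac{\zeta^{u}}{1-\zeta^{u}}$ with $f_{u}$ counting fixed points of local rotation type $u$, and adding the complex-conjugate trace for the antiholomorphic part, together with the identity $\frac{z}{1-z}+\frac{\bar{z}}{1-\bar{z}}=-1$ for unimodular $z\neq 1$, collapses the fixed-point sum to $2-|\mathrm{Fix}_{C}(\sigma)|$. Your blind derivation takes the other standard route, through the topological Lefschetz fixed point theorem, and it is correct: the traces on $H^{0}$ and $H^{2}$ are each $1$ (connectedness, orientation preservation by holomorphicity), the trace on $H^{1}(C,\mathbb{C})$ is the displayed eigenvalue sum because $\sigma^{*}$ is defined already on $H^{1}(C,\mathbb{R})$ while conjugation swaps $H^{1,0}$ and $H^{0,1}$, and the local index at each fixed point is $\mathrm{sign}\det_{\mathbb{R}}(I-d\sigma_{p})=\mathrm{sign}\,|1-\lambda|^{2}=+1$ after the averaging (Cartan) linearization. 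Two points you gesture at should be made explicit. First, the statement implicitly assumes $\sigma\neq\mathrm{id}$ (otherwise the fixed locus is all of $C$ and the right-hand side is meaningless), and your claim $\lambda\neq 1$ needs exactly this: if $\lambda=1$ the linearized map is the identity near $p$, hence $\sigma=\mathrm{id}$ globally by the identity theorem --- the same device the paper itself uses in its lemma that a nontrivial automorphism has no eigenvalue $1$ on $H^{0,1}(C)$. Second, the nondegeneracy $\det(I-d\sigma_{p})\neq 0$ is what licenses the index form of the Lefschetz theorem, and it follows from the same computation $|1-\lambda|^{2}>0$. With those remarks your proof is complete; compared with the cited source, it is more elementary and self-contained, whereas Breuer's route yields the finer local data (the rotation numbers at fixed points) that the paper exploits separately through the conditions $\widetilde{E}$ and $\widetilde{RH}$.
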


\begin{lemma}[\protect{\cite[Example 30.3]{TB}}]\label{example}If $\sigma$ is an automorphism of order $3$ of a compact Riemann surface $C$ of genus $g \geq 2$, and $1, \omega, \omega^{2}$  are eigenvalues of $\sigma$ on $H^{0, 1}(C)$. Let the number of eigenvalues be equal to $k_{0}, k_{1}, k_{2}$ respectively, then $k_{0}, k_{1}, k_{2}$ must satisfy the following inequalities
\begin{equation*}
 \begin{cases}
k_{0}, \, \, k_{1}, \, \, k_{2} \geq 0; \\
k_{0} + k_{1} - 2k_{3} \leq 1;\\
k_{0} - 2k_{1} + k_{2} \leq 1;\\
k_{0} + k_{1} + k_{3} \geq 2.
 \end{cases}
\end{equation*}
\end{lemma}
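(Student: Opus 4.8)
The plan is to translate the problem into a fixed-point computation. Set $G=\langle\sigma\rangle$ and let $\pi\colon C\to C'=C/G$ be the quotient, a degree-$3$ cover of a curve $C'$ of genus $g'$. Since $H^{0}(C',K_{C'})$ is identified with the $\sigma$-invariant subspace of $H^{0}(C,K_{C})$, and the invariant subspace has the same dimension whether computed on $H^{1,0}$ or on its conjugate $H^{0,1}(C)$, we get $g'=k_{0}$. A fixed point $p$ of $\sigma$ is totally ramified for $\pi$, and the differential $d\sigma_{p}$ acts on the tangent line $T_{p}C$ by a primitive cube root of unity; let $f_{1}$ (resp.\ $f_{2}$) be the number of fixed points with eigenvalue $\omega$ (resp.\ $\omega^{2}$), so that $f_{1},f_{2}$ are nonnegative integers and $|\mathrm{Fix}_{C}(\sigma)|=f_{1}+f_{2}$.

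First I would record Riemann--Hurwitz for $\pi$, namely $2g-2=3(2g'-2)+2(f_{1}+f_{2})$, which together with $g=k_{0}+k_{1}+k_{2}$ and $g'=k_{0}$ gives $f_{1}+f_{2}=k_{1}+k_{2}-2k_{0}+2$. This single relation does not separate $f_{1}$ from $f_{2}$, so the decisive second relation comes from the holomorphic structure. I would apply the holomorphic Lefschetz (Eichler trace) formula to $\sigma$,
$$1-\operatorname{tr}\bigl(\sigma^{*}\mid H^{0,1}(C)\bigr)=\sum_{\sigma(p)=p}\frac{1}{1-\lambda_{p}},$$
where $\lambda_{p}$ is the tangent eigenvalue at $p$. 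Using $\operatorname{tr}(\sigma^{*}\mid H^{0,1}(C))=k_{0}+k_{1}\omega+k_{2}\omega^{2}$, the identities $\tfrac{1}{1-\omega}=\tfrac{1-\omega^{2}}{3}$ and $\tfrac{1}{1-\omega^{2}}=\tfrac{1-\omega}{3}$, and $\omega^{2}=-1-\omega$, I would expand both sides in the basis $\{1,\omega\}$ and equate coefficients. This yields two linear equations in $f_{1},f_{2},k_{0},k_{1},k_{2}$ which solve to
$$f_{1}=1-k_{0}-k_{1}+2k_{2},\qquad f_{2}=1-k_{0}+2k_{1}-k_{2}$$
(up to interchanging the two labels, depending on the rotation-number convention); one checks these are consistent with the Riemann--Hurwitz relation above.

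The inequalities now fall out. The bound $k_{0},k_{1},k_{2}\ge 0$ is just nonnegativity of multiplicities, and $k_{0}+k_{1}+k_{2}=g\ge 2$ is the genus hypothesis, giving the first and last lines (where $k_{3}$ is a misprint for $k_{2}$). The remaining two come from $f_{1}\ge 0$ and $f_{2}\ge 0$: these read $k_{0}+k_{1}-2k_{2}\le 1$ and $k_{0}-2k_{1}+k_{2}\le 1$. The main obstacle I anticipate is purely bookkeeping: getting the rotation-number conventions right in the Eichler trace formula (tangent versus cotangent eigenvalue, $\sigma$ versus $\sigma^{-1}$), since a wrong choice permutes $f_{1}\leftrightarrow f_{2}$; fortunately the two resulting inequalities are symmetric under this swap, so the final statement is convention-independent. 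One should also confirm that every fixed point carries a primitive (not trivial) eigenvalue, which holds because $\sigma$ has exact order $3$ and acts faithfully near any of its fixed points.
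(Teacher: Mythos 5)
Your proof is correct and is essentially the paper's own argument: the paper establishes this lemma by citing Breuer's Example 30.3, which is precisely your computation --- specializing the Eichler trace formula (the condition $\widetilde{E}$, equivalently the holomorphic Lefschetz formula you invoke) to an automorphism of order $3$, solving for the fixed-point counts $f_{1}=1-k_{0}-k_{1}+2k_{2}$ and $f_{2}=1-k_{0}+2k_{1}-k_{2}$ (up to the harmless label swap you flag), and reading the two nontrivial inequalities off from $f_{1},f_{2}\geq 0$, with the remaining lines being nonnegativity of multiplicities and $g\geq 2$. You also correctly note that $k_{3}$ in the statement is a misprint for $k_{2}$, and your Riemann--Hurwitz consistency check, while not strictly needed, is accurate.
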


Let $I(m)$ be the set of positive integers smaller than $m$ that are coprime to $m$.

\begin{defin}[\protect{The Condition $\widetilde{E}$, \cite[Definition 30.1]{TB}}]\label{e} \textnormal{We say that the number $\alpha \in \mathbb{Q}(\zeta_{m})$ satisfies the Eichler Trace Formula for $m$, if $\alpha$ can be written as $$\alpha = 1 + \sum\limits_{u \in I(m)}f_{u}\frac{\zeta^{u}_{m}}{1 - \zeta^{u}_{m}}, $$ with non-negative integers $f_{u}$. We say that $G$-character $\chi$ satisfies the condition $\widetilde{E}$ if $\chi(1) \geq 2$ and for all~$\sigma \in G$, $\chi(\sigma)$ satisfies the Eichler Trace Formula for $|\sigma|$.}
\end{defin}

Let $G$ be a finite cyclic group and $\sigma \in G$, and  $CY(G, \sigma)$ the set of those cyclic subgroups of $G$ that contain $\langle \sigma \rangle$ properly. For each group in $CY(G, \sigma)$, choose a generator $k$ such that $k^d = \sigma$ for $d = |k|/|\sigma|$. Denote the set of these $k$ by $cy(G, \sigma)$. Note that this set is not uniquely defined.
\begin{defin}[\protect{The Condition $\widetilde{RH}$, \cite[Definition 30.8]{TB}}]\label{rh} \textnormal{Let $G$ be a finite group, and $\chi$ a class function of $G$ that satisfies $\widetilde{E}$. If non-negative integers $r_{\chi, u}(\sigma)$ are given such that 
$$\chi(\sigma) = 1 + \sum\limits_{u \in I(|\sigma|)}r_{\chi, u}(\sigma)\frac{\zeta^{u}_{m}}{1 - \zeta^{u}_{m}}$$ holds for all $\sigma \in G\backslash \{\mathrm{id}\}$ then the values $$r^{*}_{\chi, u}(\sigma) = r_{\chi, u}(\sigma) -  \sum\limits_{k \in cy(G, \sigma)}\sum\limits_{\substack{v \in I(|k|) \\ v \equiv u (mod |\sigma|)}}r^{*}_{\chi, v}(k)$$
and 
$$l_{\chi, u}(\sigma) = \frac{r^{*}_{\chi, u}(\sigma)}{[C_{G}(\sigma):\langle \sigma \rangle]}$$ are well-defined. In this case, we call $(l_{\chi, u}(\sigma))_{\sigma \in G\backslash \{\mathrm{id}\}, u \in I(|\sigma|)}$  an admissible system for $\chi$ if the underlying map $r_{\chi, u}$ on $G\backslash \{\mathrm{id}\}$ is constant on conjugacy classes of $G$ and $r_{\chi, u}(\sigma^{u})~=~r_{\chi, 1}(\sigma)$ holds for all $\sigma \in G\backslash \{\mathrm{id}\}$ and $ u \in I(|\sigma|)$.
We say that $\chi$ satisfies the condition $\widetilde{RH}$ if and only if $\chi$ satisfies the condition $\widetilde{E}$, and there is an admissible system $(l_{\chi, u}(\sigma))_{\sigma \in G\backslash \{\mathrm{id}\}, u \in I(|\sigma|)}$ such that $l_{\chi, u}(\sigma)$ is a non-negative integer for all $\sigma \in G\backslash \{\mathrm{id}\}$ and $u \in I(|\sigma|)$.}
\end{defin}

\begin{theorem} [\protect{\cite[Corollary 32.2]{TB}}]\label{cr_e} Let $G$ be a cyclic group, $\chi$ is a $G$-character. Then the following statements are equivalent:
\begin{enumerate}
\item{Character $\chi$ satisfies the condition $\widetilde{RH}$.}
\item{There is a compact Riemann surface $C$ of genus $g = \chi(1)$ with $G$ group action such that the representation of group in $H^{0, 1}(C)$ defined by $\chi$.}
\end{enumerate}
\end{theorem}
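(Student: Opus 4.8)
The plan is to prove the two implications separately, using the Eichler Trace Formula (the formula in Definition~\ref{e}) as the bridge between the analytic datum, namely the $G$-module $H^{0,1}(C)$, and the combinatorial datum $\widetilde{RH}$ of Definition~\ref{rh}, and using Riemann's Existence Theorem to pass back from combinatorics to geometry. Throughout I would exploit that $G$ is cyclic, hence abelian: every subgroup is cyclic and normal, $C_G(\sigma) = G$ for all $\sigma$, so $[C_G(\sigma):\langle\sigma\rangle] = |G|/|\sigma|$, and $cy(G,\sigma)$ is simply the set of preferred generators $k$ of the cyclic groups lying strictly between $\langle\sigma\rangle$ and $G$. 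This collapses all the centralizer and conjugacy bookkeeping of Definition~\ref{rh} to a lattice computation over the divisors of $|G|$.

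For the implication $(2)\Rightarrow(1)$, suppose $C$ realizes $\chi$. For each $\sigma \neq \mathrm{id}$ the Eichler Trace Formula expresses $\chi(\sigma) = \mathrm{tr}(\sigma \mid H^{0,1}(C))$ as $1 + \sum_u f_u(\sigma)\,\frac{\zeta^{u}}{1-\zeta^{u}}$, where $f_u(\sigma)\geq 0$ is the number of fixed points of $\sigma$ at which $\sigma$ rotates the tangent space by $\zeta^{u}$; thus $\chi$ satisfies $\widetilde{E}$ with $r_{\chi,u}(\sigma) = f_u(\sigma)$. The admissibility relations, constancy on conjugacy classes (automatic since $G$ is abelian) and $r_{\chi,u}(\sigma^{u}) = r_{\chi,1}(\sigma)$, follow because $\sigma$ and $\sigma^{u}$ fix the same points, their rotation numbers being related by multiplication by $u$. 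The key step is to reorganize the fixed points by their \emph{exact} stabilizer: a point counted in $f_u(\sigma)$ may in fact be fixed by a larger cyclic group $\langle k\rangle$, in which case its contribution reappears in the $r^{*}_{\chi,v}(k)$, and the congruence $v\equiv u \pmod{|\sigma|}$ is precisely the matching of the rotation number of $\sigma = k^{d}$ with that of $k$. Subtracting these contributions is Möbius inversion over the subgroup lattice and yields $r^{*}_{\chi,u}(\sigma)\geq 0$, the number of points whose stabilizer is exactly $\langle\sigma\rangle$; dividing by the orbit length $|G|/|\sigma|$ counts the corresponding branch points on $C/G$, which are genuine point-counts, so the $l_{\chi,u}(\sigma)$ are non-negative integers and $\widetilde{RH}$ holds.

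For the implication $(1)\Rightarrow(2)$, I would read the $l_{\chi,u}(\sigma)$ as branch data for a prospective cover $C\to C/G$: prescribe, for each cyclic $\langle\sigma\rangle\leq G$ and each $u\in I(|\sigma|)$, exactly $l_{\chi,u}(\sigma)$ branch points of order $|\sigma|$ with local monodromy determined by $\sigma$ and $u$. The genus $h$ of the base is then forced by Riemann--Hurwitz, $2g-2 = |G|(2h-2) + \sum |G|(1-1/|\sigma|)$ with $g=\chi(1)$ and the sum taken over all prescribed branch points, and one checks that $\widetilde{E}$ makes $h$ a non-negative integer. It remains to produce a generating vector: handle generators $a_1,b_1,\dots,a_h,b_h$ together with branch monodromies $c_j$ of the prescribed orders satisfying $\prod_j c_j = 1$ in $G$ and generating $G$. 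In the abelian case the commutator relation is vacuous, so only the product-one and generation conditions survive, and the free handle generators make surjectivity easy to arrange. Riemann's Existence Theorem then produces a surface $C$ with $G$-action and exactly this ramification, and a final application of the Eichler Trace Formula recovers $\mathrm{tr}(\sigma\mid H^{0,1}(C)) = \chi(\sigma)$ for all $\sigma$ together with $\chi(1)=g$, so $\chi$ is realized.

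\emph{Main obstacle.} The delicate point is the exact equivalence between fixed-point multiplicities of a single element and branch data of the whole cover: that the starred quantities $r^{*}_{\chi,u}$ produced by the inclusion--exclusion in Definition~\ref{rh} are simultaneously non-negative and integral, and remain integral after division by the index. In the forward direction this is geometry, since they count orbits of points, but in the backward direction it is precisely $\widetilde{RH}$ that must be shown strong enough both to force $h\in\mathbb{Z}_{\geq 0}$ and to permit a valid generating vector, with no further obstruction. Proving that the numerical non-negativity is equivalent to the existence of a generating vector (product-one and generation, in the abelian setting) is the technical heart, and it is here that the cyclicity of $G$, which trivializes commutators and centralizers, is essential.
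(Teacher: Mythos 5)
The first thing to note is that the paper contains no proof of Theorem~\ref{cr_e} at all: it is imported verbatim from \cite[Corollary 32.2]{TB} and used as a black box, so your attempt can only be measured against the cited source, whose strategy your outline does reproduce (Eichler trace formula and fixed-point bookkeeping for necessity; branch data, generating vectors and Riemann's Existence Theorem for sufficiency). Your direction $(2)\Rightarrow(1)$ is sound in substance: reading $r_{\chi,u}(\sigma)$ as the number of fixed points with rotation number $u$, performing the inclusion--exclusion of Definition~\ref{rh} over the subgroup lattice of the cyclic group to isolate the points with exact stabilizer $\langle\sigma\rangle$, and dividing by $[G:\langle\sigma\rangle]$ to count branch points of $C\to C/G$ is precisely how non-negativity and integrality of the $l_{\chi,u}(\sigma)$ arise in the source.

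The genuine gap sits where you yourself locate the ``main obstacle,'' in $(1)\Rightarrow(2)$, and it is not merely delicate --- as written, one step fails. (i) Your claim that ``the free handle generators make surjectivity easy to arrange'' is true only when the quotient genus $h\geq 1$; when $h=0$ the branch monodromies $c_1,\dots,c_r$ must generate $G$ and satisfy $\prod_j c_j=1$ on their own, and for cyclic $G$ this is governed by Harvey's conditions (an lcm condition on the branch orders with any single one deleted, plus a parity condition at the prime $2$). Deriving Harvey's conditions from non-negativity and integrality of the $l_{\chi,u}(\sigma)$ is the actual content of the proof in \cite{TB} and is absent from your sketch; note that $h=0$ is the case relevant to essentially every invocation of this theorem in the present paper (e.g.\ the order-$12$ action realized in Lemma~\ref{5m6}). (ii) Even granting generation, the product-one requirement is a nontrivial linear constraint in the abelian group on local monodromies that are already pinned down by the prescribed rotation data $u$, and it does not follow from $\widetilde{E}$ alone; you assert rather than verify its satisfiability. (iii) Your treatment of the base genus has the logic inverted: the natural definition is $h=\langle\chi,1_G\rangle$, which is a non-negative integer for free because $\chi$ is assumed to be a genuine character, and what must then be \emph{checked} is that Riemann--Hurwitz holds numerically with $g=\chi(1)$ and the prescribed branch data --- a trace-formula computation you gesture at (``one checks that $\widetilde{E}$ makes $h$ a non-negative integer'') but do not perform, and which does not follow from $\widetilde{E}$ by itself. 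In short: correct skeleton, same route as the cited source, but the hard half of the equivalence is named rather than proved.
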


\begin{theorem}[\protect{\cite[Theorem 1]{K}}]\label{cr_e_p}
Let $G$ be a cyclic group of a prime order, $\chi$ is a $G$-character. Then the following statements are equivalent:
\begin{enumerate}
\item{Character $\chi$ satisfies the condition $\widetilde{E}$.}
\item{There is a compact Riemann surface $C$ of genus $g  = \chi(1)$ with $G$ group action such that the representation of group in $H^{0, 1}(C)$ is defined by $\chi$.}
\end{enumerate}
\end{theorem}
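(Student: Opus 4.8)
The plan is to deduce Theorem \ref{cr_e_p} from Theorem \ref{cr_e} by showing that, when $G$ is cyclic of prime order $p$, the condition $\widetilde{RH}$ of Definition \ref{rh} collapses onto the condition $\widetilde{E}$ of Definition \ref{e}. Since $\widetilde{RH}$ includes $\widetilde{E}$ by definition, only the implication $\widetilde{E}\Rightarrow\widetilde{RH}$ needs an argument; combined with Theorem \ref{cr_e} this would yield both directions of the stated equivalence at once, under the understanding $g=\chi(1)\geq 2$ which is forced by $\widetilde{E}$ itself (a faithful action on a surface of genus $\leq 1$ gives $\chi(1)\leq 1$). Throughout, let $\zeta$ denote a primitive $p$-th root of unity.

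So suppose $\chi$ is a $G$-character satisfying $\widetilde{E}$. Every nontrivial $\sigma\in G$ has order $p$ and $\langle\sigma\rangle=G$, so no cyclic subgroup contains $\langle\sigma\rangle$ properly; hence $CY(G,\sigma)=\varnothing$, $cy(G,\sigma)=\varnothing$, and the defining sum for $r^{*}_{\chi,u}(\sigma)$ is empty, giving $r^{*}_{\chi,u}(\sigma)=r_{\chi,u}(\sigma)$. Moreover $G$ is abelian, so $C_{G}(\sigma)=G=\langle\sigma\rangle$ and $[C_{G}(\sigma):\langle\sigma\rangle]=1$, whence $l_{\chi,u}(\sigma)=r_{\chi,u}(\sigma)$. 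Thus the non-negative integers furnished by $\widetilde{E}$ are already the numbers $l_{\chi,u}(\sigma)$, and the integrality and non-negativity demanded by $\widetilde{RH}$ hold for free; the recursive subtraction over $cy(G,\sigma)$ that makes $\widetilde{RH}$ delicate in the composite-order case has disappeared.

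It remains to exhibit an admissible system, i.e.\ to choose the coefficients $r_{\chi,u}(\sigma)$ compatibly across $G$. Constancy on conjugacy classes is vacuous since $G$ is abelian, so the only clause to verify is $r_{\chi,u}(\sigma^{u})=r_{\chi,1}(\sigma)$. Here I would fix one generator $\sigma_{0}$ and a valid expansion $\chi(\sigma_{0})=1+\sum_{v}f_{v}\frac{\zeta^{v}}{1-\zeta^{v}}$ with $f_{v}\geq 0$, and then propagate by Galois: because $\chi$ is a character, $\chi(\sigma_{0}^{u})$ is the image of $\chi(\sigma_{0})$ under the automorphism $\zeta\mapsto\zeta^{u}$, so applying this automorphism and reindexing $w=uv$ yields the valid expansion $\chi(\sigma_{0}^{u})=1+\sum_{w}f_{u^{-1}w}\frac{\zeta^{w}}{1-\zeta^{w}}$. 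Declaring $r_{\chi,w}(\sigma_{0}^{u}):=f_{u^{-1}w}$ then defines non-negative integer coefficients for every nontrivial element, and a direct check using commutativity of $(\mathbb{Z}/p)^{*}$ gives $r_{\chi,u}(\sigma^{u})=r_{\chi,1}(\sigma)$ for every $\sigma=\sigma_{0}^{j}$ and every $u$. This produces an admissible system with non-negative integral $l_{\chi,u}(\sigma)$, so $\chi$ satisfies $\widetilde{RH}$, and Theorem \ref{cr_e} closes the argument. I expect the only careful point when writing this out to be checking that the Galois-propagated coefficients simultaneously realize every value $\chi(\sigma)$ and satisfy all clauses of admissibility; crucially, no further number theory is needed, since $\widetilde{E}$ for a single generator already forces $\widetilde{E}$ for all of $G\setminus\{\mathrm{id}\}$.

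A self-contained alternative, not invoking Theorem \ref{cr_e}, is geometric. The implication (2)$\Rightarrow$(1) is immediate from the Eichler trace formula: for a genuine $G$-action the coefficients $f_{u}$ in $\chi(\sigma)=1+\sum_{u}f_{u}\frac{\zeta^{u}}{1-\zeta^{u}}$ count fixed points of $\sigma$ with a prescribed local rotation number, hence are non-negative integers, and $\chi(1)=g\geq 2$. For (1)$\Rightarrow$(2) I would read the branch data off the $f_{u}$, determine the quotient genus $h$ from Riemann--Hurwitz $2g-2=p(2h-2)+(p-1)\sum_{u}f_{u}$, build the $\mathbb{Z}/p$-cover with the Riemann Existence Theorem, and confirm via the Chevalley--Weil formula that its $H^{0,1}$-character is $\chi$. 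In this route the main obstacle is precisely what the reduction above avoids: one must show that $\widetilde{E}$ forces $h$ to be a non-negative integer and the local monodromies to obey the sum-zero congruence $\sum_{u}u\,f_{u}\equiv 0\pmod p$ required for the cover to exist, and these must be extracted from the integrality of $\chi$ as a character together with $\widetilde{E}$ on all powers of $\sigma$.
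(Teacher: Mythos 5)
The paper gives no proof of Theorem \ref{cr_e_p} at all: it is imported verbatim from \cite{K} as background, so your attempt can only be compared with the cited literature rather than with an internal argument. Your main route is correct, and it is essentially the observation underlying Breuer's treatment \cite{TB} rather than Kuribayashi's original one: for $G$ cyclic of prime order $p$, every nontrivial $\sigma$ generates $G$, so $CY(G,\sigma)=\varnothing$ and $C_{G}(\sigma)=\langle\sigma\rangle$, whence $r^{*}_{\chi,u}(\sigma)=r_{\chi,u}(\sigma)=l_{\chi,u}(\sigma)$ and the recursive part of $\widetilde{RH}$ trivializes; the only nonvacuous admissibility clause is $r_{\chi,u}(\sigma^{u})=r_{\chi,1}(\sigma)$, and your Galois propagation $r_{\chi,w}(\sigma_{0}^{u}):=f_{u^{-1}w}$ verifies it, since $r_{\chi,u}(\sigma_{0}^{ju})=f_{(ju)^{-1}u}=f_{j^{-1}}=r_{\chi,1}(\sigma_{0}^{j})$, while the expansions of $\chi(\sigma_{0}^{u})$ are exactly the Galois conjugates of the single expansion furnished by $\widetilde{E}$ (here you correctly use that $\chi(\sigma_{0})\in\mathbb{Z}[\zeta_{p}]$ and that $\zeta\mapsto\zeta^{u}$ sends $\chi(\sigma_{0})$ to $\chi(\sigma_{0}^{u})$, and that nonuniqueness of the $f_{v}$ is harmless because one consistent choice suffices). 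Combined with Theorem \ref{cr_e} this yields both implications, so the reduction is a legitimate proof given the paper's quoted machinery. Kuribayashi's own argument in \cite{K} is instead your ``self-contained alternative'': one reads branch data off the $f_{u}$, checks the Riemann--Hurwitz integrality and the monodromy congruence $\sum_{u}u\,f_{u}\equiv 0 \pmod{p}$, and builds the cyclic cover directly; you rightly flag that congruence as the genuinely delicate step of that route, which is precisely what the reduction through $\widetilde{RH}$ buys you the right to skip, at the cost of taking Theorem \ref{cr_e} as a black box. One minor caveat, which you also note: the implication (2)$\Rightarrow$(1) tacitly assumes $g=\chi(1)\geq 2$ (built into the definition of $\widetilde{E}$), an assumption inherited from the paper's own statement and harmless in all of its applications, where $g=3,4$.
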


\begin{lemma}Let $\zeta$ be a non-trivial $p$-th root of unity, $p$ is prime. Then
$$\sum\limits_{i=1}^{p - 1}\zeta^{i} = -1$$ is the only linear relation over $\mathbb{Q}$. \end{lemma}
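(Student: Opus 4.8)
The plan is to reduce the statement to the irreducibility of the $p$-th cyclotomic polynomial. Since $p$ is prime and $\zeta \neq 1$ is a $p$-th root of unity, $\zeta$ is in fact a primitive $p$-th root of unity: it is a root of $x^p - 1 = (x-1)(x^{p-1} + x^{p-2} + \cdots + 1)$, and as $\zeta \neq 1$ it must be a root of the second factor $\Phi_p(x) = x^{p-1} + \cdots + x + 1$. This already yields the asserted relation $\sum_{i=1}^{p-1}\zeta^i = -1$, so the real content of the lemma is that no other relation exists except scalar multiples of this one.

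First I would establish that $\Phi_p$ is irreducible over $\mathbb{Q}$. The standard device is the substitution $x \mapsto x+1$: one computes $\Phi_p(x+1) = \frac{(x+1)^p - 1}{x} = \sum_{k=1}^{p} \binom{p}{k} x^{k-1}$, whose leading coefficient is $1$, whose constant term is $\binom{p}{1} = p$, and all of whose intermediate coefficients $\binom{p}{k}$ for $1 \le k \le p-1$ are divisible by $p$, while $p^2 \nmid p$. Eisenstein's criterion at the prime $p$ then shows that $\Phi_p(x+1)$, and hence $\Phi_p(x)$, is irreducible.

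Consequently $\Phi_p$ is the minimal polynomial of $\zeta$ over $\mathbb{Q}$, so $[\mathbb{Q}(\zeta):\mathbb{Q}] = p-1$ and the powers $1, \zeta, \ldots, \zeta^{p-2}$ form a $\mathbb{Q}$-basis of $\mathbb{Q}(\zeta)$; in particular they are linearly independent over $\mathbb{Q}$. Now suppose $\sum_{i=0}^{p-1} c_i \zeta^i = 0$ is an arbitrary $\mathbb{Q}$-linear relation among the $p$ powers $1, \zeta, \ldots, \zeta^{p-1}$. Eliminating $\zeta^{p-1}$ via $\zeta^{p-1} = -(1 + \zeta + \cdots + \zeta^{p-2})$ turns the relation into $\sum_{i=0}^{p-2}(c_i - c_{p-1})\zeta^i = 0$, and linear independence of the basis forces $c_i = c_{p-1}$ for every $i$. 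Thus all coefficients coincide, so every relation is proportional to $1 + \zeta + \cdots + \zeta^{p-1} = 0$, which is exactly the claim.

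I expect the Eisenstein step to be the only nontrivial ingredient and the crux of the argument; once the irreducibility of $\Phi_p$ is in hand, the uniqueness of the relation is a short piece of linear algebra that follows formally from the dimension count.
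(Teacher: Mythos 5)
Your proof is correct, and it is in fact more complete than the paper's own argument, though both rest on the same underlying fact. The paper works abstractly with the minimal polynomial $f$ of $\zeta$: given two polynomial expressions with $r(\zeta)=s(\zeta)=-1$, it divides by $f$ with remainder and uses the minimal-polynomial property to conclude that the reduced representations coincide, i.e.\ that the representation of an element of $\mathbb{Q}(\zeta)$ by a polynomial of degree less than $\deg f$ is unique. What the paper never makes explicit is that $f$ is precisely $\Phi_p(x)=1+x+\cdots+x^{p-1}$, so that $\deg f=p-1$; yet this is exactly what the lemma as stated requires (otherwise some shorter relation could exist, and the displayed relation would not be ``the only'' one) and what the subsequent Corollary \ref{cor1} uses to force $k_1=\cdots=k_{p-1}$. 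You supply that missing ingredient by proving irreducibility of $\Phi_p$ via Eisenstein's criterion applied to $\Phi_p(x+1)$, after which the uniqueness claim becomes the same dimension count in both treatments: yours phrased as linear independence of the basis $1,\zeta,\ldots,\zeta^{p-2}$ together with elimination of $\zeta^{p-1}$, the paper's as uniqueness of remainders modulo $f$. So the two proofs share a skeleton, but yours closes a genuine gap in the paper's write-up, at the modest cost of reproving the standard irreducibility of the $p$-th cyclotomic polynomial.
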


\begin{proof} 
Suppose there are two relations: $-1 = r(\zeta) = s(\zeta)$. Let $f(x)$ be a minimal polynomial of an element $\zeta$ over $\mathbb{Q}$. Let us divide $r(x)$ and $s(x)$ by $f(x)$ with remainder. We get $r'(\zeta) =s'(\zeta) =-1$, where $\deg \, r'(x) < \deg \, f(x)$ and $\deg \, s'(x) < \deg \, f(x)$. According to minimal polynomial property $f(x) $ divides $r'(x) - s'(x)$, but then degree of $r'(x) - s'(x)$ is smaller than degree of  $f(x)$; thus, $r(\zeta) = s(\zeta).$
\end{proof}

\begin{cor}\label{cor1}Let $\zeta$ be a non-trivial $p$-th root of unity. If $p$ is prime and 
$$k_{1}\zeta + \ldots + k_{p - 1}\zeta^{p - 1} \in \mathbb{Z},$$ then $k_{1} = \ldots = k_{p - 1}.$\end{cor}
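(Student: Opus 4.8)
The plan is to reduce the statement directly to the preceding lemma, which determines all $\mathbb{Q}$-linear relations among the powers $1, \zeta, \ldots, \zeta^{p-1}$. By hypothesis there is an integer $m$ with $k_{1}\zeta + \cdots + k_{p-1}\zeta^{p-1} = m$, which I would rewrite as the vanishing linear combination
$$-m \cdot 1 + k_{1}\zeta + k_{2}\zeta^{2} + \cdots + k_{p-1}\zeta^{p-1} = 0.$$
This is precisely a $\mathbb{Q}$-linear relation among $1, \zeta, \ldots, \zeta^{p-1}$, with coefficient vector $(-m, k_{1}, \ldots, k_{p-1})$.

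The key step is to invoke the lemma: since $p$ is prime, the only such relation is (a scalar multiple of) $1 + \zeta + \cdots + \zeta^{p-1} = 0$, i.e. the space of relations is one-dimensional, spanned by the all-ones coefficient vector. Hence $(-m, k_{1}, \ldots, k_{p-1})$ must be proportional to $(1, 1, \ldots, 1)$, so there is a scalar $\lambda$ with $-m = \lambda$ and $k_{1} = k_{2} = \cdots = k_{p-1} = \lambda$. In particular all the $k_{i}$ coincide (with common value $-m$), which is the assertion.

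Equivalently, and this is the phrasing I would likely record to keep the argument self-contained, one can reason through the minimal polynomial: the polynomial $P(x) = k_{1}x + \cdots + k_{p-1}x^{p-1} - m$ has degree at most $p-1$ and vanishes at $\zeta$, whose minimal polynomial over $\mathbb{Q}$ is the cyclotomic polynomial $\Phi_{p}(x) = 1 + x + \cdots + x^{p-1}$ of degree $p-1$. Thus $\Phi_{p} \mid P$, and for degree reasons $P = c\,\Phi_{p}$ for some $c \in \mathbb{Q}$; comparing coefficients gives $k_{1} = \cdots = k_{p-1} = c$ and $-m = c$.

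I do not expect a genuine obstacle here, as the corollary is an immediate consequence of the lemma. The only point requiring care is the correct reading of the lemma's phrase ``the only linear relation'': it must be understood as the statement that the relation space is one-dimensional, so that every relation — in particular the one produced above — is a scalar multiple of the standard relation rather than literally equal to it.
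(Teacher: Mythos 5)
Your proposal is correct and matches the paper's route: the paper states this corollary without a separate proof, as an immediate consequence of the preceding lemma, and your minimal-polynomial argument ($\Phi_{p}\mid P$ plus a degree count, so $P=c\,\Phi_{p}$) is exactly the mechanism used in the paper's proof of that lemma. Your careful reading of ``the only linear relation'' as one-dimensionality of the relation space is the right interpretation, and in fact your self-contained version is stated more cleanly than the paper's own lemma proof, whose final line should conclude $r'=s'$ as polynomials rather than $r(\zeta)=s(\zeta)$.
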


\begin{theorem}[\protect{\cite[Theorems 1, 2, 3, 4]{N}}]\label{max_ord} Let $\sigma$ be an automorphism of a compact Riemann surface $C$ of genus $g \geq 2$. 
\begin{enumerate}{
\item{The Riemann surface having automorphism of maximum order $4g + 2$ is isomorphic to the Riemann surface defined by
$$y^{2} = x(x^{2g + 1} - 1),$$
the automorphism is given by:
 $$\sigma(x, y) = (e^{\frac{2\pi\textbf{i}}{2g + 1}}x, e^{\frac{2\pi\textbf{i}}{4g + 2}}y).$$}

\item{If $g \geq 4$, then the maximum order of an automorphism, which is smaller than $4g + 2$, is equal to $4g$, and the Riemann surface and the automorphism are given by
$$ y^{2} = x(x^{2g} - 1), \, \, \, \sigma(x, y) = (e^{\frac{2\pi\textbf{i}}{2g}}x, e^{\frac{2\pi\textbf{i}}{4g}}y).$$

}

\item{
If $g = 3$, then the maximum order of an automorphism, which is smaller than $15$,  is equal to $9$, and the Riemann surface and the automorphism are given by
$$y^{3} = x(x^{3} - 1),  \, \, \, \sigma(x, y) = (e^{\frac{2\pi\textbf{i}}{3}}x, e^{\frac{2\pi\textbf{i}}{9}}y).$$
}

\item{

If $g \equiv 1$ (mod $3$),  then the maximum order of an automorphism, which is  smaller than  $4g$,  is equal to $3g+3$, and the Riemann surface and the automorphism are given by
$$ y^{3} = x(x^{g + 1} - 1), \, \, \, \sigma(x, y) = (e^{\frac{2\pi\textbf{i}}{g + 1}}x, e^{\frac{2\pi\textbf{i}}{3g + 3}}y).$$

If $g = 4$, then the maximum order of an automorphism, which is smaller than $15$,  is equal to $12$.

}

}\end{enumerate}
\end{theorem}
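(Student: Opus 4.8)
The plan is to translate the statement into a Riemann--Hurwitz computation for the cyclic quotient and then solve a constrained Diophantine optimization. First I would set $N=|\sigma|$ and form the quotient $\pi\colon C\to Y:=C/\langle\sigma\rangle$; write $h$ for the genus of $Y$ and let $m_{1},\dots,m_{r}$ be the orders of the (necessarily cyclic) stabilizers lying over the branch points $q_{1},\dots,q_{r}$ of $\pi$, so that each $m_{j}\mid N$. Riemann--Hurwitz then reads
$$2g-2=N\Bigl(2h-2+\sum_{j=1}^{r}\bigl(1-\tfrac{1}{m_{j}}\bigr)\Bigr),$$
whence $N=(2g-2)/D$ with $D:=2h-2+\sum_{j}(1-1/m_{j})>0$. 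Maximizing $N$ is therefore equivalent to minimizing the positive orbifold quantity $D$, subject to the constraint that local monodromies $c_{1},\dots,c_{r}\in\mathbb{Z}/N$ with $\operatorname{ord}(c_{j})=m_{j}$ can be chosen to sum to $0$ and, together with the $2h$ handle classes, to generate $\mathbb{Z}/N$.

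Second, I would reduce to $h=0$ and $r=3$. Writing $n_{j}:=N/m_{j}\in\mathbb{Z}_{\ge1}$, the identity becomes $2g-2=N(2h-2+r)-(n_{1}+\dots+n_{r})$. A case analysis disposes of everything else: any $h\ge1$ forces $N\le 2g-2$ (the best case being $h=1$, $r=2$, $m_{1}=m_{2}=2$), while on the sphere $r\le2$ is incompatible with the relation $c_{1}+\dots+c_{r}\equiv0$ together with generation. The one genuinely delicate exclusion is $r\ge4$, where the naive real minimization of $D$ would allow $N$ as large as $12g-12$; here one must use that the generation condition forces $N=\operatorname{lcm}(m_{1},\dots,m_{r})$, and this coupling between $N$ and the denominators rules out such configurations once integrality of $g$ is imposed. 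There remains the triangle equation
$$2g-2=N-\frac{N}{m_{1}}-\frac{N}{m_{2}}-\frac{N}{m_{3}},\qquad 2\le m_{1}\le m_{2}\le m_{3}\mid N,\ \operatorname{lcm}(m_{i})=N.$$

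Third---and this is the crux---I would solve this self-referential optimization. The difficulty is exactly that the admissible denominators must divide the very quantity $N$ one is trying to maximize, so one cannot optimize freely over real triples. I would fix the smallest order first: if $m_{1}\ge3$ the right-hand side is too large to leave room, so $m_{1}=2$ and the equation becomes $2g-2=N\bigl(\tfrac12-\tfrac{1}{m_{2}}-\tfrac{1}{m_{3}}\bigr)$; then taking $m_{3}=N$ (a generator is needed for $\operatorname{lcm}(m_{i})=N$) and analyzing the integrality of $N/m_{2}$ forces $(m_{1},m_{2},m_{3})=(2,\,2g+1,\,4g+2)$, giving $D=(g-1)/(2g+1)$, $N=4g+2$, and shows this is strictly optimal. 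For realization and uniqueness I would invoke Riemann's existence theorem: the signature $(0;2,2g+1,4g+2)$ together with its essentially unique zero-sum generating vector determines $C$ with its $\sigma$-action up to isomorphism, and I would then match this to $y^{2}=x(x^{2g+1}-1)$ by checking directly that this curve has genus $g$ and that $(x,y)\mapsto(e^{2\pi\mathbf{i}/(2g+1)}x,\,e^{2\pi\mathbf{i}/(4g+2)}y)$ has order $4g+2$ (its quotient has exactly the branch data $2,2g+1,4g+2$ at $y=0$, at infinity, and at the origin). The eigenvalue list can be cross-checked against the Lefschetz/Eichler trace formula.

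Finally, parts (2)--(4) follow by rerunning the same optimization under the extra constraint $N<4g+2$ (then $N<4g$, and so on), peeling off the hierarchy of next-largest orders. Imposing $N<4g+2$ selects the signature $(2,4g,4g)$, hence $N=4g$ for $g\ge4$, realized by $y^{2}=x(x^{2g}-1)$; excluding this in turn produces $N=3g+3$ when $g\equiv1\pmod3$ from the signature $(3,g+1,3g+3)$, realized by $y^{3}=x(x^{g+1}-1)$; and the sporadic values $9$ for $g=3$ and $12$ for $g=4$ emerge from the finitely many remaining triangle signatures (e.g.\ $(3,9,9)$ and $(3,12,12)$). The small genera $g=3,4$ ultimately reduce to a finite check, realized either by the explicit cyclic covers above or by verifying the condition $\widetilde{RH}$ of Theorem \ref{cr_e}; prime orders can additionally be controlled through Theorem \ref{fb}. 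I expect the heaviest work to lie in this last stage---ensuring no intermediate order is skipped and that each claimed value is simultaneously an upper bound and genuinely realized---while the one real conceptual obstacle remains the coupled divisibility constraints analyzed in the third paragraph.
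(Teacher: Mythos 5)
First, a point of comparison: the paper does not prove Theorem \ref{max_ord} at all --- it is imported verbatim from Nakagawa \cite{N} (with the crude bound $4g+2$ quoted separately from \cite{TB}), so there is no internal proof to measure you against. Your plan (Riemann--Hurwitz for $C\to C/\langle\sigma\rangle$, reduction to a genus-$0$ base with three branch points, the zero-sum and generation conditions on the local monodromies $c_{1},\dots,c_{r}\in\mathbb{Z}/N$, then Riemann existence for realization) is exactly the standard route and essentially the one in the cited source, and your identification of the extremal signatures $(2,2g+1,4g+2)$, $(2,4g,4g)$, $(3,g+1,3g+3)$, $(3,9,9)$, $(3,12,12)$ is correct.

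However, two steps at your self-identified crux are genuinely broken as written. (a) The parenthetical ``taking $m_{3}=N$ (a generator is needed for $\operatorname{lcm}(m_{i})=N$)'' is false: a zero-sum generating triple in a cyclic group need not contain an element of order $N$. For $N=30$ take $c_{1}=5,\,c_{2}=3,\,c_{3}=22$, of orders $6,10,15$; they sum to $30\equiv 0$ and $\operatorname{lcm}(6,10,15)=30$, and by Riemann existence the signature $(0;6,10,15)$ is realized on a curve of genus $11$ since $2g-2=30\bigl(1-\tfrac16-\tfrac1{10}-\tfrac1{15}\bigr)=20$. So your optimization, as set up, silently ignores a whole class of triples. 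The repair is a valuation argument: for each prime $p\mid N$ at least two of the $c_{i}$ must be prime to $p$ (else the sum is nonzero mod $p$), so either some $m_{i}=N$ or the cofactors $n_{i}=N/m_{i}$ are pairwise coprime and all $\geq 2$, in which case one checks $n_{1}+n_{2}+n_{3}\leq \tfrac{N}{6}+5$ and hence $N\leq\tfrac{12g+18}{5}$, safely below every threshold in (1)--(4); but this lemma must be stated and proved, not asserted. (b) Your treatment of $r\geq 4$ is overstated: such configurations are \emph{not} ``ruled out'' (the hyperelliptic involution has signature $(0;2,\dots,2)$ with $r=2g+2$, and $(0;2,2,3,3)$ with $N=6$ is realized at $g=2$); what the proof needs is a quantitative bound, e.g.\ that a cyclic group has at most one involution forces at most two $m_{j}=2$ among a zero-sum quadruple, giving $N\leq 2g+2$ for $r=4$ and $N\leq\tfrac{12}{5}(g-1)$ for $r\geq 5$, and these bounds must be rechecked against each successive threshold $4g+2,\,4g,\,3g+3,\,9,\,12$ when you ``peel off'' parts (2)--(4). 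With these two lemmas supplied (plus the uniqueness of the generating vector up to equivalence for the rigidity claims), your skeleton does yield the theorem; without them, the central optimization step does not go through.
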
 

\section{Curves of large genus}\label{big}

In this section we will prove Theorem \ref{m1}.

\begin{lemma}\label{ml1} If $x > 0$, then 
	$$\mathrm{cos} \, x \geq 1 - \frac{10x}{13}.$$
\end{lemma}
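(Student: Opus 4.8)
The plan is to split the positive reals into three ranges and estimate $\cos x$ differently on each, since no single elementary bound works throughout. For large $x$ the inequality is trivial: as soon as $x \ge 13/5$ we have $1 - \frac{10x}{13} \le 1 - 2 = -1 \le \cos x$, so only $0 < x < 13/5$ requires genuine work.

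Next I would establish a fully rigorous polynomial lower bound valid for all $x \ge 0$, namely
$$\cos x \ge 1 - \frac{x^{2}}{2} + \frac{x^{4}}{24} - \frac{x^{6}}{720}.$$
This follows by integrating the elementary inequality $\sin t \le t$ repeatedly over $[0,x]$: integrating gives $\cos x \ge 1 - \frac{x^2}{2}$, integrating again $\sin x \ge x - \frac{x^3}{6}$, then $\cos x \le 1 - \frac{x^2}{2} + \frac{x^4}{24}$, then $\sin x \le x - \frac{x^3}{6} + \frac{x^5}{120}$, and one final integration yields the displayed bound. Each step uses only that the integrand inequality already proved is nonnegative on $[0,x]$.

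The crux is the remaining range $(0, 13/5]$, where I reduce the target inequality to showing that the auxiliary polynomial
$$p(x) = \frac{10}{13} - \frac{x}{2} + \frac{x^{3}}{24} - \frac{x^{5}}{720}$$
is nonnegative, because the difference between the degree-six bound above and the line $1 - \frac{10x}{13}$ factors as $x\,p(x)$. I would then minimize $p$ on $[0, 13/5]$: its derivative $p'(x) = -\frac{1}{2} + \frac{x^{2}}{8} - \frac{x^{4}}{144}$ vanishes exactly at $x^{2} = 6$ and $x^{2} = 12$, and since $\sqrt{12} > 13/5$, the only interior critical point is $x = \sqrt{6}$, a local minimum. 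Evaluating gives the closed form $p(\sqrt{6}) = \frac{10}{13} - \frac{3\sqrt{6}}{10}$, which is positive because $\left(\frac{100}{39}\right)^{2} = \frac{10000}{1521} > 6$. As $p$ decreases on $(0,\sqrt{6})$ and increases on $(\sqrt{6}, 13/5]$, this critical value is the global minimum on the range and it is positive, which closes the argument.

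The main obstacle is the intermediate range near $[20/13, 13/5]$: there $\cos x$ has already become negative while $1 - \frac{10x}{13}$ is still close to zero or only mildly negative, so the crude estimates $\cos x \ge -1$ and $\cos x \ge 1 - \frac{x^2}{2}$ both fail, and one must certify that the sharper degree-six polynomial clears the line even though the margin is small. Everything ultimately rests on the single sharp numerical inequality $\frac{10}{13} > \frac{3\sqrt{6}}{10}$, which I verify by squaring so as to avoid any appeal to decimal approximations.
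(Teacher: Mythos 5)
Your proof is correct, and it takes a genuinely different route from the paper's. The paper works directly with $f(x)=\cos x - 1 + \frac{10x}{13}$: after the same trivial reduction to $[0,\frac{13}{5}]$ (where your step $1-\frac{10x}{13}\le -1\le \cos x$ for $x\ge \frac{13}{5}$ coincides with the paper's), it locates the interior critical points $x=\arcsin(\frac{10}{13})$ and $x=\pi-\arcsin(\frac{10}{13})$ and certifies $f>0$ at both through a chain of numerical estimates --- bounding $\arcsin(\frac{10}{13})$ above by $\frac{\pi}{3}-\frac{\pi}{24}$ via nested half-angle radicals, bounding $\sqrt{69}$, and using rational bounds on $\pi$. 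You instead replace $\cos x$ by the degree-six lower bound $1-\frac{x^2}{2}+\frac{x^4}{24}-\frac{x^6}{720}$, rigorously obtained by repeatedly integrating $\sin t\le t$ (this alternating-Taylor argument is sound), factor the difference with the line as $x\,p(x)$, and minimize the quintic $p$ exactly: $p'$ is a quadratic in $x^2$ with roots $x^2=6$ and $x^2=12$, only $x=\sqrt{6}$ lies in range since $6\le \frac{169}{25}<12$, the sign of $p'$ confirms it is the global minimum there, and $p(\sqrt{6})=\frac{10}{13}-\frac{3\sqrt{6}}{10}>0$ collapses to the single rational comparison $\frac{10000}{1521}>6$ (indeed $6\cdot 1521=9126<10000$). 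Your version buys a fully algebraic verification --- no transcendental constants such as $\arcsin(\frac{10}{13})$ or $\pi$ ever need to be estimated --- at the cost of the auxiliary Taylor-bound lemma; the paper's version needs no such lemma but rests on several delicate numerical inequalities that are harder to audit. Both approaches must clear the genuinely tight region near $x\approx\sqrt{6}$, where $\cos x$ is already negative and the margin over the line is small; you do it with an exact closed-form minimum, the paper with explicit lower bounds on $f$ at its two critical points.
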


\begin{proof}

In case when $x \geq \frac{13}{5}$ inequality is correct because  $$\mathrm{cos}\,{x} \geq -1 \geq 1 - \frac{10}{13}\cdot\frac{13}{5}.$$
Let us show that inequality is correct on  $[0, \frac{13}{5}]$. For this purpose we will find extremums of the function
 $$f(x) = \mathrm{cos} \, x - 1 + \frac{10x}{13} .$$ For this let us find roots of the equation $$f'(x) = -\mathrm{sin}\, x +  \frac{10}{13}.$$
The extremums of $f(x)$ are the following
 $$\left[ 
      \begin{gathered} 
        x = \mathrm{arcsin\Bigl{(}\frac{10}{13}\Bigr{)}} + 2\pi k; \\ 
        x = \pi - \mathrm{arcsin\Bigl{(}\frac{10}{13}\Bigr{)}} + 2\pi k, \, k \in \mathbb{Z}.
      \end{gathered} 
\right.$$
There are only two values on $[0, \frac{13}{5}]$: namely, $x = \mathrm{arcsin\bigl{(}\frac{10}{13}\bigr{)}}$ and $ x = \pi - \mathrm{arcsin\bigl{(}\frac{10}{13}\bigr{)}}$. Let us substitute these values in $f(x).$
\begin{multline*}
f\Bigl{(}\mathrm{arcsin\Bigl{(}\frac{10}{13}\Bigr{)}}\Bigr{)} = \mathrm{cos} \Bigl{(} \mathrm{arcsin\Bigl{(}\frac{10}{13}\Bigl{)}}\Bigl{)} - 1 + \frac{10\mathrm{arcsin\bigl{(}\frac{10}{13}\bigr{)}}}{13} = \\
= \sqrt{1 - \Bigl{(}\frac{10}{13}\Bigr{)}^{2}} - 1 + \frac{\mathrm{10arcsin\bigl{(}\frac{10}{13}\bigl{)}}}{13}
= \frac{\sqrt{69}}{13} - 1 +  \frac{10\mathrm{arcsin\bigl{(}\frac{10}{13}\bigr{)}}}{13}.
\end{multline*}

Let us multiply $f(x)$ by 13
$$\sqrt{69} - 13 +  10 \cdot \mathrm{arcsin\Bigl{(}\frac{10}{13}\Bigr{)}} \geq 8 - 13 +10 \cdot \frac{\pi}{4} \geq -5 + \frac{5}{2} \cdot \pi \geq 0.$$

Now substitute the value $\pi - \mathrm{arcsin(\frac{10}{13})}$ in $f(x)$
\begin{multline*}
f\Bigl{(}\pi - \mathrm{arcsin\Bigl{(}\frac{10}{13}\Bigr{)}}\Bigr{)} = \\
= \mathrm{cos} \Bigl{(}\pi - \mathrm{arcsin\Bigl{(}\frac{10}{13}\Bigr{)}}\Bigr{)} - 1 + \frac{10\bigl{(}\pi - \mathrm{arcsin}\bigl{(}\frac{10}{13}\bigr{)}\bigr{)}}{13} = \\
= -\frac{\sqrt{69}}{13} - 1 + 10 \cdot \frac{\pi - \mathrm{arcsin\bigl{(}\frac{10}{13}\bigr{)}}}{13}.
\end{multline*}

Let us multiply $f(x)$ by  13

\begin{multline*}
 -\sqrt{69} - 13 +  10 \cdot\Bigl{(}\pi - \mathrm{arcsin\Bigl{(}\frac{10}{13}\Bigr{)}}\Bigr{)} \geq \\
\geq - \frac{17}{2} - 13 + 10 \cdot \pi - 10 \cdot \mathrm{arcsin\Bigl{(}sin\Bigl{(}\frac{\pi}{3} - \frac{\pi}{24}\Bigr{)}\Bigr{)}} = \\
= -\frac{43}{2} + \frac{170 \pi}{24} \geq 0. 
\end{multline*}

We need to show that $\mathrm{sin\Bigl{(}\frac{\pi}{3} - \frac{\pi}{24}\Bigr{)}} \geq \frac{10}{13}$.

Indeed
\begin{multline*}
\mathrm{sin\Bigl{(}\frac{\pi}{3} - \frac{\pi}{24}\Bigr{)}} = \mathrm{sin\Bigl{(}\frac{\pi}{3}\Bigr{)}cos\Bigl{(}\frac{\pi}{24}\Bigr{)} - cos\Bigl{(}\frac{\pi}{3}\Bigr{)}sin\Bigl{(}\frac{\pi}{24}\Bigr{)}} = \\
 = \mathrm{\frac{\sqrt{3}}{2}cos\Bigl{(}\frac{\pi}{24}\Bigr{)} - \frac{1}{2}sin\Bigl{(}\frac{\pi}{24}\Bigr{)}} = 
\mathrm{\frac{\sqrt{3}}{2} \sqrt{\frac{1 + cos(\frac{\pi}{12})}{2}}} - \mathrm{\frac{1}{2} \sqrt{\frac{1 - cos(\frac{\pi}{12})}{2}}} =\\
= \mathrm{\frac{\sqrt{3}}{2} \sqrt{\frac{1 + \sqrt{\frac{1 + cos(\frac{\pi}{6})}{2}}}{2}}} - \mathrm{\frac{1}{2} \sqrt{\frac{1 - \sqrt{\frac{1 + cos(\frac{\pi}{6})}{2}}}{2}}} = \mathrm{\frac{\sqrt{3}}{4} \sqrt{2 + \sqrt{2 + \sqrt{3}}}} - \mathrm{\frac{1}{4} \sqrt{2 - \sqrt{2 + \sqrt{3}}}} \geq \\
\geq \frac{\sqrt{3}}{4} \sqrt{2 + \sqrt{\frac{37}{10}}} - \frac{1}{4} \sqrt{2 - \sqrt{\frac{37}{10}}} \geq  \frac{\sqrt{3}}{4} \sqrt{2 + \frac{19}{10}} - \frac{1}{4} \sqrt{2 - \frac{19}{10}} = \\
 = \frac{\sqrt{3}}{4} \sqrt{\frac{39}{10}} - \frac{1}{4} \sqrt{\frac{1}{10}} \geq \frac{1}{4}\sqrt{\frac{117}{10}} - \frac{1}{4}\sqrt{\frac{1}{10}} \geq \frac{17}{20} - \frac{2}{25} = \frac{77}{100} > \frac{10}{13}.
\end{multline*}

We have shown that in both extremums $f(x) > 0;$ thus, on the whole $[0, \frac{13}{5}]$ function is positive.\\
\end{proof}

Let us prove Theorem \ref{m1}. The proof is following the proof of Theorem \ref{b1}.

\begin{proof}
According to \ref{kl1} to show that $J/G$ is not uniruled we need to show that action $G$ satisfies the global Reid condition. Let us fix an element $\sigma$ of order $r$ of the group $G$ and point $p$ of the variety $J$ and show that $(\sigma, T_{p}(J))$ satisfies the local Reid condition. Action $G$ on  $ T_{p}(J)$ is isomorphic to the action on  $T_{0}(J) = H^{0, 1}(C)$. The eigenvalues of  $\sigma$ are $\zeta^{a_{1}}, \ldots, \zeta^{a_{g}}$, where $\zeta = e^{\frac{2\pi \mathbf{i}}{r}}$ and $0 \leq a_{i}  < r$. It remains to show that $\sum a_{i} \geq r$ for all $\sigma$ in $G$.

The eigenvalues of $\sigma$ acting on $H^{1}(C, \mathbb{C})$ are  $\zeta^{a_{1}}, \ldots, \zeta^{a_{g}}$,$\zeta^{-a_{1}}, \ldots, \zeta^{-a_{g}}$. Thus, trace of $\sigma$ is equal to  $2\sum \mathrm{cos} \frac{2\pi a_{i}}{r}$. According to the Lefschetz formula the trace is equal to $2 - \mathrm{Fix}_{C}(\sigma)$. Thus, the trace is equal or less than 2. Let us use that  $x \geq 1 - \frac{10x}{13}$ according to  \ref{ml1}. Then
$$1 \geq \sum \mathrm{cos} \frac{2\pi a_{i}}{r} \geq \sum \Bigl{(}1 - \frac{20\pi a_{i}}{13r}\Bigr{)} \geq g - \frac{20\pi}{13r}\sum a_{i}.$$

Hence, $\sum a_{i} \geq \frac{g - 1}{20\pi}13r$. Thus, if $$g \geq 1 + \frac{20\pi}{13} = 5.8332\ldots,$$ we get $\sum a_{i} > r$.
In the case when $g = 5$ the statement follows from Theorem \ref{b2}.\\
\end{proof}

\section{Curves of genus 4}\label{four}

 In this section we will prove Theorem \ref{m2}.

{We will write down all the necessary conditions for the existence of an automorphism and that quotient is not uniruled. Namely, Lefschetz formula for different degrees of automorphism and opposite inequality to the Reid condition. In case of existence of the remaining automorphisms, the quotient will not be uniruled. The next case is also possible: there is an automorphism for which the Reid condition is met but not for its degree. In this case we need to check those degrees of automorphism that are coprime with its order. The reason is that when the order is not coprime, we get an automorphism that we have already checked.}

\begin{lemma}\label{4m1}
Let $C$ be a compact Riemann surface of genus g = 4, and $\sigma$ is an automorphism of order~3. Then $J/\langle\sigma\rangle$ is not uniruled.
\end{lemma}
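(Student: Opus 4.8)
The plan is to apply the criterion of Theorem \ref{kl1}. Since $J$ is an abelian variety it has trivial canonical bundle, so $J/\langle\sigma\rangle$ is not uniruled if and only if the $\langle\sigma\rangle$-action satisfies the global Reid condition. The stabilizer of any point $p \in J$ is a subgroup of the cyclic group $\langle\sigma\rangle$ of order $3$, hence either trivial or all of $\langle\sigma\rangle$; and exactly as in the proof of Theorem \ref{m1}, the representation of this stabilizer on $T_p(J)$ is isomorphic to its representation on $T_0(J) = H^{0,1}(C)$. Thus it suffices to verify the local Reid condition for the two non-trivial elements $\sigma$ and $\sigma^2$ acting on $H^{0,1}(C)$.

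First I would record the eigenvalue data. The eigenvalues of $\sigma$ on $H^{0,1}(C)$ are cube roots of unity, so they lie in $\{1, \omega, \omega^2\}$; let their multiplicities be $k_0, k_1, k_2$, so that $k_0 + k_1 + k_2 = g = 4$. The key input is the lemma asserting that a non-trivial automorphism of finite order has no eigenvalue equal to $1$ on $H^{0,1}(C)$. Applied to $\sigma$ this gives $k_0 = 0$, so every eigenvalue is $\omega$ or $\omega^2$.

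Next I would compute the Reid sums. Writing each eigenvalue as $e^{2\pi \mathbf{i} a_i/3}$ with $0 \le a_i < 3$, the exponent $a_i = 0$ is excluded precisely because $k_0 = 0$, so each $a_i \in \{1,2\}$, and in particular $a_i \ge 1$. Hence $\sum_{i} a_i \ge g = 4 > 3 = r$, which is the local Reid inequality for $\sigma$. The same argument applies verbatim to $\sigma^2$: it is again a non-trivial automorphism of order $3$, so it too has no eigenvalue equal to $1$, its eigenvalue exponents are all $\ge 1$, and they sum to at least $4 > 3$. Since the local Reid condition holds for both $\sigma$ and $\sigma^2$, the global Reid condition holds, and Theorem \ref{kl1} yields the conclusion.

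There is essentially no obstacle in this low-order case: the only way a Reid sum could fail to reach $r$ would be through a contribution $a_i = 0$, i.e. an eigenvalue equal to $1$, and this is ruled out by the no-fixed-eigenvalue lemma. If sharper bookkeeping were needed one could invoke Lemma \ref{example} to pin down the admissible triples $(k_0,k_1,k_2)$, but this refinement is not required for the inequality above.
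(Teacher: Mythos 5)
Your argument breaks at the step ``$k_0 = 0$.'' You are reading the background lemma as saying that a non-trivial automorphism of finite order has \emph{no} eigenvalue equal to $1$ on $H^{0,1}(C)$, but that is not what the lemma can mean, and it is false as you use it: its proof only shows that the action on the tangent space at $0$ is not the identity, i.e.\ that not \emph{all} eigenvalues are $1$. The multiplicity of the eigenvalue $1$ on $H^{0,1}(C)$ equals the genus of the quotient curve $C/\langle\sigma\rangle$, and this is frequently positive: a genus-$4$ curve which is a cyclic triple cover of an elliptic curve branched at $3$ points (such covers exist by Riemann--Hurwitz and the Riemann existence theorem) carries an order-$3$ automorphism with eigenvalue $1$ of multiplicity $1$. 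The paper itself exhibits the phenomenon in Theorem \ref{m3}, where an order-$2$ automorphism in genus $3$ has eigenvalues $1,1,-1$; and in the paper's own proof of this lemma the Lefschetz formula only bounds the multiplicity $k$ of the eigenvalue $1$ by $k\leq 2$, after which the cases $k=0,1,2$ are analyzed separately.

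This is not a cosmetic issue, because with eigenvalue $1$ allowed there is a genuinely dangerous configuration that your proof never confronts: eigenvalues $1,1,\omega,\omega$. Its Lefschetz trace is $3\cdot 2-4=2\leq 2$, so the fixed-point count does not exclude it, and its Reid sum is $0+0+1+1=2<3$, so it \emph{violates} the local Reid condition. The lemma is therefore only true because this particular action does not exist on any genus-$4$ curve, which the paper establishes by checking the inequalities of Lemma \ref{example}: with $(k_0,k_1,k_2)=(2,2,0)$ one gets $k_0+k_1-2k_2=4>1$, a contradiction. So the appeal to Lemma \ref{example}, which you dismiss as optional ``sharper bookkeeping,'' is in fact the essential step; without it (or some equivalent non-existence argument) the claimed inequality $\sum_i a_i\geq 4$ simply fails for an a priori admissible eigenvalue set. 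Your framing via Theorem \ref{kl1}, the reduction to $T_0(J)=H^{0,1}(C)$, and the observation that it suffices to treat $\sigma$ and $\sigma^2$ are all fine; the case analysis over $k_0\in\{0,1,2\}$ with the Lefschetz bound and Lemma \ref{example} is what must replace the false vanishing of $k_0$.
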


\begin{proof}

Let us denote $\omega = e^{\frac{2\pi \mathbf{i}}{3}}$. Let the eigenvalues of  $\sigma$ be
\[
\underbracket{1, \ldots, 1}_{k}, \, \underbracket{\omega, \ldots, \omega}_{l}, \, \underbracket{\omega^{2}, \ldots, \omega^{2}}_{4 - k - l}.
\]
Then from the Lefschetz formula it follows that
\begin{multline*}
k + l\omega + (4 - k - l)\omega^{2} + k + l\omega^{2} + (4 - k - l)\omega = 2k + (4 - k)\omega + (4 - k)\omega^{2} = 3k - 4 \leq 2. 
\end{multline*}

We get that $k \leq 2.$ 
If $k = 2$ then the global Reid condition has the following form:
$$l + 2(2 - l) = 4 - l \geq 3.$$
Thus, we get that only when $l \geq 2$ the condition is met. However, since $k + l \leq 4$ we get that $l = 2$. Then eigenvalues of  $\sigma$ are $1, 1, \omega, \omega.$
According to \ref{example} we know that this case does not exist.

If $k = 1$ then the global Reid condition has the following form:
$$l + 2(3 - l) = 6 - l \geq 3.$$
We get that if $l > 4$ then the condition is not met. But $k + l \leq 4$; thus, the condition is always met.

If $k = 0$ then the global Reid condition has the following form:
$$l + 2(4 - l) = 8 - l \geq 3.$$
The condition is not met only if $l > 5$. 
\end{proof}

\begin{lemma}\label{4m2}
Let $C$ be a compact Riemann surface of genus g = 4, and $\sigma$ is an automorphism of prime order. Then $J/\langle\sigma\rangle$ is not uniruled.
\end{lemma}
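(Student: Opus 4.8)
The plan is to reduce the statement about an arbitrary automorphism of prime order $p$ to a finite case analysis, exactly parallel to Lemma \ref{4m1} but now ranging over all admissible primes. First I would invoke Theorem \ref{fb}: since $g = 4$, an automorphism of prime order $p$ must satisfy $p \leq g = 4$, or $p = g + 1 = 5$, or $p = 2g + 1 = 9$. As $9$ is not prime, the only candidates are $p \in \{2, 3, 5\}$. The case $p = 3$ is already handled by Lemma \ref{4m1}, so it remains to treat $p = 2$ and $p = 5$.

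For each remaining prime I would set up the eigenvalue data on $H^{0,1}(C)$ as $\zeta^{a_1}, \dots, \zeta^{a_4}$ with $\zeta = e^{2\pi \mathbf{i}/p}$, $0 \leq a_i < p$, and no $a_i = 0$ being forced only when the eigenvalue cannot be $1$ (recall the lemma that eigenvalues are never unity, so in fact each $a_i \neq 0$). The two constraints to combine are the Lefschetz formula, which bounds $\sum_{i}(\zeta^{a_i} + \zeta^{-a_i}) = 2 - |\mathrm{Fix}_C(\sigma)| \leq 2$, and the negation of the local Reid condition, $\sum a_i < p$, which is what one must rule out. For $p = 2$ the eigenvalues are all $\pm 1$; writing $m$ for the number of $-1$ eigenvalues, the Reid sum is exactly $m$, and one checks that $\sum a_i = m \geq 2 = p$ holds except for small $m$, where the Lefschetz constraint (together with $|\mathrm{Fix}| \geq 0$) must be tested. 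For $p = 5$, I would use Corollary \ref{cor1}: since the Lefschetz trace $\sum(\zeta^{a_i} + \zeta^{-a_i})$ is an integer, the multiplicities of $\zeta, \zeta^2, \zeta^3, \zeta^4$ among the eigenvalues are tightly constrained, and one enumerates the few eigenvalue patterns of $H^{0,1}(C)$, in each case comparing $\sum a_i$ against $5$.

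The main obstacle I expect is the $p = 5$ analysis, because unlike $p = 2$ there are genuinely several eigenvalue configurations to track, and one must be careful that the configuration actually arises on a curve of genus $4$ rather than merely satisfying the numerical Lefschetz bound. Here I would lean on Theorem \ref{cr_e_p}: for a cyclic group of prime order, a character is realized by an automorphism of a genus-$g$ surface precisely when it satisfies the condition $\widetilde{E}$ (the Eichler Trace Formula). So the correct strategy is to enumerate only those characters $\chi$ of $\langle\sigma\rangle \cong \mathbb{Z}/5$ with $\chi(1) = 4$ that satisfy $\widetilde{E}$, discarding spurious numerical solutions, and then verify $\sum a_i \geq 5$ for each surviving case.

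Concretely, the key steps in order are: (1) apply Theorem \ref{fb} to restrict to $p \in \{2, 3, 5\}$ and dispense with $p = 3$ via Lemma \ref{4m1}; (2) handle $p = 2$ directly by counting $-1$ eigenvalues and checking the Reid sum against the Lefschetz bound; (3) for $p = 5$, use Corollary \ref{cor1} and the condition $\widetilde{E}$ (via Theorem \ref{cr_e_p}) to list the admissible eigenvalue multiplicities, then confirm in each surviving case that $a_1 + a_2 + a_3 + a_4 \geq 5$, so that the local Reid condition holds and hence, by Theorem \ref{kl1}, $J/\langle\sigma\rangle$ is not uniruled. I would not expect any configuration passing both the realizability test and the Lefschetz constraint to violate Reid, which is what makes the conclusion hold uniformly across the prime-order case.
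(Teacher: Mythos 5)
Your proposal is correct, but it takes a genuinely different route from the paper's at the decisive step of bounding the prime. The paper never invokes Theorem \ref{fb} in this lemma: it bounds the order by $4g+2=18$ via Theorem \ref{order} and must then dispose of the primes $7, 11, 13, 17$ by hand. For $N\geq 5$ it applies Corollary \ref{cor1} to the trace on $H^{1}(C,\mathbb{C})$ (integrality forces all nontrivial $N$-th roots of unity to appear with equal multiplicity), which eliminates $11, 13, 17$ for dimension reasons and leaves, for $N=7$, exactly one Reid-violating configuration $1,\zeta,\zeta^{2},\zeta^{3}$; this is then killed because the Lefschetz formula gives it exactly one fixed point, contradicting Theorem \ref{fk}. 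Your shortcut via Theorem \ref{fb} --- $p\leq 4$, $p=5$, or $p=2g+1=9$, hence $p\in\{2,3,5\}$ since $9$ is not prime --- makes the whole $p=7$ analysis, and with it the appeal to \ref{fk}, unnecessary; it is in fact the same strategy the paper itself uses in the genus-$3$ case (Lemma \ref{5m1}). Your $p=2$ treatment coincides with the paper's (their $k\leq 2$ is your $m\geq 2$, forced by $|\mathrm{Fix}_{C}(\sigma)|\geq 0$). For $p=5$ your extra realizability filter ($\widetilde{E}$ via Theorem \ref{cr_e_p}) is sound, and if anything more careful than the paper, which simply asserts the eigenvalues are $\zeta,\zeta^{2},\zeta^{3},\zeta^{4}$: after Corollary \ref{cor1} the only integrality-compatible configurations are $k_{0}=0$ with each pair $\{\zeta^{j},\zeta^{-j}\}$ of multiplicity $2$ in $H^{1}$ (Reid sum at least $6>5$ in all cases), or $k_{0}=2$ with multiplicity $1$, which your Lefschetz/$\widetilde{E}$ test rejects since it forces $|\mathrm{Fix}_{C}(\sigma)|=-1$. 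So the paper's argument is self-contained modulo \ref{order} and \ref{fk}, while yours is shorter and uniform with the genus-$3$ section.

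One caution: your parenthetical claim that ``eigenvalues are never unity, so in fact each $a_{i}\neq 0$'' misreads the paper's lemma. As its proof (Torelli plus the Identity Theorem) shows, the actual content is that the action on $H^{0,1}(C)$ is nontrivial, i.e.\ the eigenvalues are not \emph{all} equal to $1$; the eigenvalue $1$ genuinely occurs (your own $p=2$ case allows it, and the realized genus-$3$ involution with eigenvalues $1,1,-1$ in Theorem \ref{m3} exhibits it). Had you carried ``$a_{i}\neq 0$'' through the $p=5$ enumeration, you would have silently discarded the configurations $1,1,\zeta,\zeta^{a}$ with $a=2,3$ --- precisely the Reid-violating ones, which must be excluded by the Lefschetz/$\widetilde{E}$ computation just described, not by that lemma. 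Since your stated procedure does run that test (and your $p=2$ analysis tacitly allows the eigenvalue $1$), the argument survives, but the parenthetical should be deleted.
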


\begin{proof}
According to \ref{order} the maximum order of an automorphism is equal or smaller than $4g + 2$.  That is why it is sufficient to consider automorphisms of order 18 or smaller.

Let us consider the case when $N = 2$.
If the eigenvalues of  $\sigma$ are 
Then from the Lefschetz formula it follows that

$$k + (4 - k)(-1) + k + (4 - k)(-1) = 4k - 8 \leq 2 $$
We get that $k \leq 2$.

For any $k = 0, 1, 2$ the global Reid condition is correct. Thus $J/G$ is not uniruled in case of action of group of order $2$.

The case when $N = 3$ is considered in lemma \ref{4m1}.

Let us suppose that automorphism order is $N \geq 5$. Let $\zeta^{i}$, where $i = 0, \ldots, N - 1$ the eigenvalues and the quantity of them is equal to $k_{i}$, $i = 0, \ldots, N - 1$ respectively. 

There are eight eigenvalues of $\sigma$ acting on $H^{1}(C, \mathbb{C})$. If there is a $\zeta^{a}$ amid them then there is also $\zeta^{-a}.$ According to \ref{cor1} all coefficients are required to be equal for sum to be integer.
Then if $N = 5$, there are following possible eigenvalues: $\zeta$, $\zeta^{2}$, $\zeta^{3}$, $\zeta^{4}$.
In case when $N = 7$, the following: 1, $\zeta^{a}$, $\zeta^{b}$, $\zeta^{c}$, where $a, b, c$ are different.
In cases when $N = 11$, $N = 13$ and $N = 17$ there are no possible eigenvalues.

if $N = 5$ then $\zeta$, $\zeta^{2}$, $\zeta^{3}$, $\zeta^{4}$ satisfy the global Reid condition, thus $J/G$ is not uniruled.

If $N = 7$ then 1, $\zeta^{a}$, $\zeta^{b}$, $\zeta^{c}$, where $a, b, c$ are different, do not satisfy the global Reid condition only if $a = 1$, $b = 2$, $c = 3$. Then from the Lefschetz formula it follows that this kind of action has only one fixed point. However, according to \ref{fk}, if the action of prime order has a fixed point it has at least two fixed points. That is why this case is also not possible.

We get that in case of prime order of an automorphism the variety $J/\langle\sigma\rangle$  is not uniruled.
\end{proof}

\begin{lemma}\label{4m3}
Let $C$ be a compact Riemann surface of genus $g = 4$, and $\sigma$ is an automorphism of order~4. Then $J/\langle\sigma\rangle$ is not uniruled.
\end{lemma}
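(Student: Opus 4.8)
The plan is to apply Theorem \ref{kl1}. Since $J$ is an abelian variety, hence has trivial canonical bundle, and since $\langle\sigma\rangle$ acts on $J$ fixing the origin, the quotient $J/\langle\sigma\rangle$ fails to be uniruled precisely when the $\langle\sigma\rangle$-action satisfies the global Reid condition. Exactly as in the proof of Theorem \ref{m1}, for any $p\in J$ the representation of the stabilizer of $p$ on $T_p(J)$ is isomorphic to its representation on $T_0(J)=H^{0,1}(C)$. Therefore it suffices to verify the local Reid condition for the action of $\langle\sigma\rangle$ on $H^{0,1}(C)$, that is, to check that $\sum_{i} a_i(\tau)\ge \mathrm{ord}(\tau)$ for every $\tau$ in $\langle\sigma\rangle\setminus\{\mathrm{id}\}=\{\sigma,\sigma^{2},\sigma^{3}\}$, where $\zeta^{a_1},\dots,\zeta^{a_4}$ are the eigenvalues of $\tau$ with $0\le a_i<\mathrm{ord}(\tau)$.

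I would then treat $\sigma$ and $\sigma^{3}$ together. Both have order $4$, which coincides with the genus $g=4$. Each is a nontrivial automorphism of finite order, so by the lemma asserting that such automorphisms have no unit eigenvalue on $H^{0,1}(C)$, none of the exponents vanishes; hence $a_i\ge 1$ for all four of them. Summing gives $\sum_{i=1}^{4}a_i\ge 4=\mathrm{ord}(\sigma)=\mathrm{ord}(\sigma^{3})$, so the local Reid condition holds for $\sigma$ and for $\sigma^{3}$ with no further computation. It is precisely the equality $r=g$ that makes the generator case automatic, and this is why no appeal to the Lefschetz formula or to an eigenvalue case analysis is needed here.

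For the remaining element $\sigma^{2}$, which has order $2$, I would either invoke Lemma \ref{4m2} directly or argue in place: being a nontrivial automorphism, $\sigma^{2}$ has no eigenvalue equal to $1$, so all four of its eigenvalues equal $-1$, giving exponents $a_i=1$ and $\sum_i a_i=4\ge 2$. Thus the local Reid condition holds for all of $\sigma,\sigma^{2},\sigma^{3}$, the global Reid condition is satisfied, and $J/\langle\sigma\rangle$ is not uniruled by Theorem \ref{kl1}. The only point that genuinely requires care, rather than a real obstacle, is the reduction of the global Reid condition to these three powers and the observation that the intermediate power $\sigma^{2}$ has prime order and is therefore already covered by the prime-order analysis; once that reduction is in place the eigenvalue count is immediate because the order of each element never exceeds the genus.
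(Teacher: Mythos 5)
There is a genuine gap, and it sits at the heart of your argument. The lemma in Section \ref{background} does not assert that a nontrivial automorphism has \emph{no} eigenvalue equal to $1$ on $H^{0,1}(C)$; what its proof (via Torelli) actually gives is that the action on $H^{0,1}(C)$ is not the identity, i.e.\ the eigenvalues are not \emph{all} equal to $1$ --- faithfulness of the representation. Individual eigenvalues equal to $1$ occur whenever the quotient $C/\langle\sigma\rangle$ has positive genus, since the pullback of the holomorphic differentials of the quotient is fixed pointwise. The paper itself exhibits this repeatedly: Theorem \ref{m3} lists a \emph{realized} involution on a genus-$3$ curve with eigenvalues $1,1,-1$, and Corollary \ref{c1} builds infinitely many such curves. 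For $g=4$ and $N=4$, degree-$4$ cyclic covers of elliptic curves exist by Riemann--Hurwitz ($6 = 4(2\cdot 1-2) + \deg R$ with $\deg R = 6$, e.g.\ ramification pattern $3+3$ or $2+2+2$), so your key claim that $a_i \geq 1$ for all $i$ --- and with it the ``automatic'' inequality $\sum a_i \geq 4 = r$ coming from $r=g$ --- is false. Concretely, the eigenvalue configurations $1,\mathbf{i},\mathbf{i},\mathbf{i}$ and $1,1,\mathbf{i},-1$ have exponent sums $3 < 4$, violate the local Reid condition, and are consistent with the Lefschetz fixed point formula for both $\sigma$ and $\sigma^{2}$, so they cannot be dismissed a priori. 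The same misreading infects your treatment of $\sigma^{2}$: its eigenvalues need not all be $-1$ (the pattern $1,1,-1,-1$ is allowed by Lefschetz); there your fallback to Lemma \ref{4m2} rescues the step, but for the generator there is no such rescue.

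The paper's proof shows what actually has to be done. It parametrizes the multiplicities $(k,l,m,4-k-l-m)$ of the eigenvalues $1,\mathbf{i},-1,-\mathbf{i}$, imposes the Lefschetz inequalities $2(k-m)\leq 2$ and $2(k+m)-2(4-k-m)\geq 2$ for $\sigma$ and $\sigma^{2}$, tabulates all multiplicity vectors violating the Reid inequality $l+2m+3(4-k-l-m)\geq 4$, and finds exactly the two survivors above. These are then eliminated not by any soft eigenvalue count but by the Eichler trace formula: writing $\chi(\sigma)$ in the form required by condition $\widetilde{E}$ forces $f_{3}=-3$ (respectively $f_{3}=-1$), a negative coefficient, so no compact Riemann surface realizes either action. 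That appeal to $\widetilde{E}$ is the essential content of the lemma, and it is precisely the step your proposal skips; the reduction to checking $\sigma,\sigma^{2},\sigma^{3}$ on $H^{0,1}(C)$ via Theorem \ref{kl1} is fine, but everything after it needs to be replaced by the case analysis.
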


\begin{proof}
Let the quantity of the eigenvalues $1, \mathbf{i}, -1, -\mathbf{i}$ of $\sigma$ be equal to $k$, $l$, $m$, $4 - k - l - m$ respectively.

From the Lefschetz formula it follows that:
\begin{multline*}
k + l\mathbf{i} + m(-1) + (4 - k - l - m)(-\mathbf{i}) + k + l(-\mathbf{i}) + m(-1) + (4 - k - l - m)\mathbf{i} = \\
= 2k - 2m + (4 - k - m)\mathbf{i} + (4 - k  - m)(-\mathbf{i}) = 2(k - m) \leq 2.
\end{multline*}

Thus, $k \leq m + 1$ and $k + m \leq 4$.

If the automorphism  $\sigma$ exists, then all its degrees exist too. The eigenvalues of $\sigma^{2}$ are $1$ and $-1$ and there are $k + m$ and $4 - k - m$ of them respectively.
From the Lefschetz formula it follows that:
$$2(k + m) - 2(4 - k - m) \geq 2.$$

The global Reid condition is
$$l + 2m + 3(4 - k - l - m) \geq 4.$$

Let us write down all possible sets of numbers  $k$, $l$, $m$, $4 - k - l - m$ which do not satisfy the global Reid condition and check which of them satisfy the Lefschetz formula. In the fifth column by + we mark sets which satisfy the Lefschetz formula.
\begin{table}[h]
  \begin{center}
    \caption{$N = 4$}
    \label{tab:table1}

    \begin{tabular}{|c|c|c|c|c|c|c|} 
      \hline
      $k$ & $l$ & $m$ & $4 - k - l - m$ & Lefschetz \\
      \hline
      3 & 0 & 0 & $1$ & $-$ \\
      \hline
      2 & 1 & 1 & 0 & $+$  \\
      \hline
      3 & 0 & 1 & 0 & $-$ \\
      \hline
      1 & 3 & 0 & 0 & $+$  \\
      \hline
      2 & 2 & 0 & 0 & $-$ \\
      \hline
      3 & 1 & 0 & 0 & $-$ \\
      \hline
      4 & 0 & 0 & 0 & $-$ \\
      \hline
    \end{tabular}
  \end{center}
\end{table}

We get two possible automorphisms with eigenvalues $1, \textbf{i}, \textbf{i}, \textbf{i}$ and $1, 1, \textbf{i}, -1$. Let us check whether the condition $\widetilde{E}$ is met for them.
If the eigenvalues are $1, \textbf{i}, \textbf{i}, \textbf{i}$ then the condition $\widetilde{E}$ has the following form:
$$ 1 + 3\textbf{i} = 1 + f_{1}\frac{\textbf{i}}{1 - \textbf{i}}+ f_{3}\frac{-\textbf{i}}{1 + \textbf{i}}.$$
Modifying it we get:
$$6\textbf{i} = \textbf{i}(f_{1} - f_{3}) - (f_{1} + f_{3}).$$
We get that $f_{1} = 3$, and $f_{3} = -3$ hence the condition is not met.
If the eigenvalues are $1, 1, \textbf{i}, -1$  then the condition $\widetilde{E}$ has the following form:
$$2\textbf{i} = \textbf{i}(f_{1} - f_{3}) - (f_{1} + f_{3}).$$
We get that $f_{1} = 1,$ and $f_{3} = -1$ hence the condition is not met.
\end{proof}

\begin{lemma}\label{4m4}
Let $C$ be a compact Riemann surface of genus $g = 4$, and $\sigma$ is an automorphism of order~6. Then $J/\langle\sigma\rangle$ is not uniruled.
\end{lemma}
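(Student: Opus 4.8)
The plan is to follow the template established in the previous lemmas (\ref{4m1}--\ref{4m3}): enumerate the eigenvalue data of $\sigma$ subject to the Lefschetz constraint, isolate the configurations that violate the Reid condition, and then eliminate each survivor by passing to powers of $\sigma$ or by testing the condition $\widetilde{E}$. Since $N=6$ is the first genuinely composite order whose prime factors are $2$ and $3$, the key structural tool is that the existence of $\sigma$ forces the existence of $\sigma^2$ (order $3$) and $\sigma^3$ (order $2$), both of which have already been handled. So first I would set $\zeta = e^{2\pi\mathbf{i}/6}$ and write the eigenvalues of $\sigma$ as $\zeta^j$ with multiplicities $k_0,\dots,k_5$ summing to $4$, noting $k_0=0$ by the lemma ruling out eigenvalue $1$.

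Next I would write down the Lefschetz formula for $\sigma$: the trace on $H^1(C,\mathbb{C})$ is $2\sum_j k_j\cos\frac{\pi j}{3} = 2k_1 - 2k_2 - 4k_3 - 2k_4 + 2k_5$, which must be $\leq 2$. Simultaneously I record the opposite-to-Reid inequality $\sum_j j\,k_j < 6$ (the configurations I want to exclude) together with $\sum_j k_j = 4$. The relevant arithmetic observation, analogous to Corollary \ref{cor1}, is that for the trace to be a rational integer the coefficients of the primitive sixth roots $\zeta,\zeta^5$ must match ($k_1=k_5$) and those of $\omega,\omega^2$, i.e.\ $\zeta^2,\zeta^4$, must match ($k_2=k_4$); this collapses the enumeration to a short list of candidate multiplicity vectors. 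I would tabulate these candidates exactly as in Table~\ref{tab:table1}, flagging which satisfy Lefschetz.

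For each surviving candidate I would apply two independent filters. The first is to compute the eigenvalues of $\sigma^2$ (an order-$3$ automorphism) and check them against Lemma \ref{example}, and likewise the eigenvalues of $\sigma^3$ (an order-$2$ automorphism) against the constraint $k\leq 2$ from Lemma~\ref{4m2}; any candidate whose square or cube is itself inadmissible is discarded at once. The second filter, for candidates that survive the power test, is to verify the Eichler Trace condition $\widetilde{E}$ from Definition~\ref{e} directly for $\sigma$: writing $\chi(\sigma) = 1 + \sum_{u\in I(6)} f_u \frac{\zeta^u}{1-\zeta^u}$ with $I(6)=\{1,5\}$, one solves for $f_1,f_5$ and checks non-negativity, exactly as was done for the two order-$4$ candidates.

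I expect the main obstacle to be purely bookkeeping rather than conceptual: the interplay between the Lefschetz inequality, the coefficient-matching relations $k_1=k_5$ and $k_2=k_4$, and the Reid violation must be tracked carefully so that no admissible eigenvalue configuration is overlooked, since a single missed case that survives all filters would be a genuine counterexample to the lemma. The subtle point is ensuring the power-test and the $\widetilde{E}$-test are used correctly in tandem: the power test exploits that an automorphism and all its powers coexist, while $\widetilde{E}$ is a necessary arithmetic obstruction for $\sigma$ itself, and every remaining candidate should fail at least one of the two. I am fairly confident each survivor will be killed either because $\sigma^2$ or $\sigma^3$ lands in an already-excluded case or because the $f_u$ forced by $\widetilde{E}$ come out negative, mirroring the mechanism of the order-$4$ proof.
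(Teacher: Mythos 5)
There is a genuine gap, and it sits in the enumeration step, before your filters ever get to act. You impose two constraints that are false, and together they delete exactly the two hard configurations the proof must confront. First, $k_0=0$ does not follow from the background lemma: that lemma (as its proof via the Identity Theorem shows) only says the action on $H^{0,1}(C)$ is not the identity, i.e.\ not \emph{all} eigenvalues equal $1$; individual eigenvalues $1$ occur throughout the paper (the $N=7$ candidates $1,\zeta^a,\zeta^b,\zeta^c$ in Lemma \ref{4m2}, the realized genus-$3$ involution with eigenvalues $1,1,-1$ in Theorem \ref{m3}). Second, the "coefficient-matching" relations $k_1=k_5$ and $k_2=k_4$ are not forced: Corollary \ref{cor1} is specific to \emph{prime} order, where $1,\zeta,\dots,\zeta^{p-1}$ admit a single rational relation. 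For $m=6$ every $\cos\bigl(\frac{2\pi j}{6}\bigr)$ lies in $\frac{1}{2}\mathbb{Z}$, so the $H^1$-trace $2k_0+(k_1+k_5)-(k_2+k_4)-2k_3$ is an integer for \emph{arbitrary} multiplicities and integrality yields no pairing at all (contrast $N=8$ or $N=10$, where $\sqrt{2}$ and $\cos\frac{2\pi}{5}$ do give such constraints). Concretely: the paper's two surviving Reid-violating sets are $(k_0,\dots,k_5)=(0,3,1,0,0,0)$ and $(1,2,0,1,0,0)$, i.e.\ eigenvalues $-\omega,-\omega,-\omega,\omega^2$ and $1,-\omega,-\omega,-1$; the first has $k_1\neq k_5$, the second has $k_0\neq 0$, so both are excluded by your pruning. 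Worse, under your constraints ($k_0=0$, $k_1=k_5=a$, $k_2=k_4=b$, $k_3=c$, $2a+2b+c=4$) the Reid sum is $6a+6b+3c=12\geq 6$ identically, so your Reid-violation list is empty and the lemma would appear to hold vacuously — the correct conclusion reached by an unsound route that never tests the genuine candidates.

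The good news is that your proposed filters are adequate once the two spurious constraints are dropped and the enumeration is done with $k_0$ free and no pairing (the paper additionally imposes the Lefschetz inequalities for $\sigma^2$ and $\sigma^3$ at the enumeration stage, which your power tests reproduce). Your $\sigma^2$-test via Lemma \ref{example} does kill $1,-\omega,-\omega,-1$, whose square has eigenvalues $1,1,\omega^2,\omega^2$, violating $k_0-2k_1+k_2\leq 1$; and your $\widetilde{E}$-test does kill $-\omega,-\omega,-\omega,\omega^2$, since writing $\chi(\sigma)=1+f_1\frac{\zeta}{1-\zeta}+f_5\frac{\zeta^5}{1-\zeta^5}$ forces $f_1+f_5=0$ with $f_1-f_5=\pm 4$, so one of $f_1,f_5$ is negative. (The paper disposes of this second set differently, by counting the six fixed points of $\sigma^2$ and invoking Riemann--Hurwitz for the quotient; your $\widetilde{E}$ route is a legitimate alternative mechanism for the same elimination.) So the repair is purely at the front end: enumerate honestly, as in the paper, and your two-filter strategy then closes the case.
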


\begin{proof}

Let $\omega = e^{\frac{2\pi \mathbf{i}}{3}}$ and the quantity of the eigenvalues $1, -\omega, \omega^{2}, -1, \omega, -\omega^{2}$ of $\sigma$ is equal to $k_{0}$, $k_{1}$, $k_{2}$, $k_{3}$, $k_{4}$, $k_{5}$ respectively.
Let us write down inequalities following from the Lefschetz formula for automorphisms $\sigma, \sigma^{2}, \sigma^{3}$ and inequality for sets which do not satisfy the global Reid condition:
\begin{equation*}
 \begin{cases}
2k_{0} + k_{1} + k_{2}(-1) + k_{3}(-2) + k_{4}(-1) + k_{5}(-1) \leq 2; \\
2(k_{0} + k_{3}) - (k_{1} + k_{4}) - (k_{2} + k_{5}) \leq 2; \\
2(k_{0} + k_{2} + k_{4}) - 2(k_{1} + k_{3} + k_{5}) \leq 2; \\
k_{1} + 2k_{2} + 3k_{3} + 4k_{4} + 5k_{5} < 6.
 \end{cases}
\end{equation*}

Let us write down all possible sets of numbers $k_{0}$, $k_{1}$, $k_{2}$, $k_{3}$, $k_{4}$, $k_{5}$ which do not satisfy the global Reid condition and satisfy all three Lefschetz inequalities:

  \begin{center}
    \begin{minipage}{0.25\textwidth}
      \begin{enumerate}
        \item[1)]{0, 3, 1, 0, 0, 0;}
        \item[2)]{1, 2, 0, 1, 0, 0.}
      \end{enumerate}
    \end{minipage}
  \end{center}

We get two possible sets of eigenvalues

  \begin{center}
    \begin{minipage}{0.25\textwidth}
      \begin{enumerate}
        \item[1)]{1, $-\omega$, $-\omega$, $-1$;}
        \item[2)]{$-\omega$, $-\omega$, $-\omega$, $\omega^{2}$.}
      \end{enumerate}
    \end{minipage}
  \end{center}

If the automorphism $\sigma$ exists then $\sigma^{2}$ exists. According to \ref{example} squared first automorphism does not exist. For the second automorphism let us write down the Riemann-Hurwitz formula. The eigenvalues of $\sigma^{2}$ are $\omega^{2}, \omega^{2}, \omega^{2}, \omega$. Since $\sigma^{2}$ has prime order then quantity of ramification points is equal to quantity of fixed points. Let us find out the genus of $C/\langle \sigma^{2} \rangle $:
$$2(4 - 1) = 3 (g(C/\langle \sigma^{2} \rangle) - 1) + 6 (3 - 1). $$
We get that genus of $C/\langle \sigma^{2} \rangle $ is negative, hence automorphism does not exist.
\end{proof}

\begin{lemma}\label{4m5}
Let $C$ be a compact Riemann surface of genus $g = 4$, and $\sigma$ is an automorphism of order~8. Then $J/\langle\sigma\rangle$ is not uniruled.
\end{lemma}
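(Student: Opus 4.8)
The plan is to show directly that every order-$8$ automorphism of $C$ satisfies the local Reid condition; then, together with Lemmas \ref{4m2} and \ref{4m3}, the whole cyclic group $\langle\sigma\rangle$ will satisfy the global Reid condition and Theorem \ref{kl1} applies. Write $\zeta = e^{2\pi\mathbf{i}/8}$ and let $k_j$ be the multiplicity of the eigenvalue $\zeta^j$ of $\sigma$ on $H^{0,1}(C)$ for $j=0,\dots,7$, so that $\sum_{j=0}^{7}k_j = 4$ and $k_0 = 0$, since eigenvalues of a non-trivial automorphism are never equal to unity. The eigenvalues of $\sigma$ on $H^1(C,\mathbb{C})$ are $\zeta^{\pm a_i}$, so by the Lefschetz fixed point formula the integer $2 - |\mathrm{Fix}_C(\sigma)|$ equals
$$\sum_{j=0}^{7} 2k_j \cos\tfrac{2\pi j}{8} = \sqrt{2}\,(k_1 + k_7 - k_3 - k_5) - 2k_4.$$
The first step I would take is to exploit that this quantity is a rational integer: since $\sqrt{2}\notin\mathbb{Q}$ (the role played here by the irrationality argument behind Corollary \ref{cor1}), the coefficient of $\sqrt{2}$ must vanish, which yields the key relation $k_1 + k_7 = k_3 + k_5$.

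Next I would deduce the Reid inequality purely from this relation. Setting $S = k_1 + k_7 = k_3 + k_5$, the identity $\sum_j k_j = 4$ becomes $2S + k_2 + k_4 + k_6 = 4$. Using $k_1 + 7k_7 \geq k_1 + k_7$, $\,3k_3 + 5k_5 \geq 3(k_3 + k_5)$ and $2k_2 + 4k_4 + 6k_6 \geq 2(k_2 + k_4 + k_6)$, I obtain
$$\sum_{i} a_i = \sum_{j=1}^{7} j\,k_j \geq (k_1+k_7) + 3(k_3+k_5) + 2(k_2+k_4+k_6) = 4S + 2(4-2S) = 8.$$
Hence $\sum_i a_i \geq 8 = N$, so $\sigma$ satisfies the local Reid condition.

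Finally, every element of order $8$ in $\langle\sigma\rangle$, namely $\sigma,\sigma^3,\sigma^5,\sigma^7$, is again an order-$8$ automorphism of $C$, so the identical computation shows each of them satisfies the local Reid condition; the elements $\sigma^2,\sigma^6$ of order $4$ satisfy it by Lemma \ref{4m3}, and $\sigma^4$ of order $2$ by Lemma \ref{4m2}. Therefore the $\langle\sigma\rangle$-action satisfies the global Reid condition, and by Theorem \ref{kl1} the quotient $J/\langle\sigma\rangle$ is not uniruled. The one genuinely decisive step — and the reason this case is shorter than the order-$6$ case — is the irrationality argument forcing $k_1 + k_7 = k_3 + k_5$; once that relation is in hand, the Reid bound is an elementary linear estimate and no appeal to the condition $\widetilde{E}$ or to the Riemann--Hurwitz formula is required.
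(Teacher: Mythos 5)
There is a genuine gap, and it sits exactly at the step you call decisive: the claim that $k_0 = 0$. The paper's lemma on eigenvalues does not say that $1$ never occurs as an eigenvalue of a non-trivial automorphism on $H^{0,1}(C)$; its proof (via Torelli and the identity theorem) only shows that the action on the tangent space is not the identity, i.e.\ that not \emph{all} eigenvalues equal $1$. The multiplicity $k_0$ of the eigenvalue $1$ equals the genus of the quotient curve $C/\langle\sigma\rangle$ and is frequently positive; the paper itself works with such sets throughout (for instance the realized case $N=2$, eigenvalues $1,1,-1$ in Theorem \ref{m3}, and the sets with $k_0 = 1$ in the tables for $N=4$, $8$, $9$). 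Once $k_0$ is allowed to be positive, your estimate degrades to
$$\sum_i a_i \;\geq\; S + 3S + 2(4 - k_0 - 2S) \;=\; 8 - 2k_0,$$
which no longer reaches the threshold $8$. Concretely, the multiplicity vector $(k_0,\dots,k_7) = (1,1,1,1,0,0,0,0)$, i.e.\ eigenvalues $1,\zeta,\mathbf{i},\zeta^3$, satisfies your irrationality constraint $k_1 + k_7 = k_3 + k_5$ (both sides equal $1$), gives the integer Lefschetz number $2$ (zero fixed points), and yet has Reid sum $0+1+2+3 = 6 < 8$. So your argument cannot exclude precisely the candidate that survives the Lefschetz analysis in the paper's proof of Lemma \ref{4m5}, and that candidate is eliminated there only by showing the Eichler condition $\widetilde{E}$ admits no non-negative solution ($f_u$ would have to be negative). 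Your closing claim that no appeal to $\widetilde{E}$ is needed is therefore exactly what fails.

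That said, your irrationality observation is sound and does streamline part of the paper's case analysis: since the Lefschetz number is a rational integer and $\sqrt{2}$ is irrational, $k_1 + k_7 = k_3 + k_5$ must hold, which disposes of the paper's candidate sets $(0,2,1,1,0,0,0,0)$ and $(1,1,0,2,0,0,0,0)$ in one stroke (these are the ones the paper discards by noting the fixed-point count is not an integer). To repair your proof you would keep that reduction, note that it leaves only the sets with $k_0 \geq 1$ and Reid sum below $8$ --- in effect only $1,\zeta,\mathbf{i},\zeta^3$ --- and then rule out the existence of such an automorphism by the condition $\widetilde{E}$ as in the paper, or by some equivalent use of the Eichler trace formula; a purely linear-algebraic argument from the Lefschetz formula alone cannot do it.
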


\begin{proof}

Let $1, \zeta, \mathbf{i}, \zeta^{3} , -1, \zeta^{5}, -\mathbf{i}, \zeta^{7}$ be the eigenvalues of  $\sigma$, where $\zeta$ is a primitive root modulo 8.
The quantities of eigenvalues are $ k_{0}$, $k_{1}$, $k_{2}$, $k_{3}$, $k_{4}$, $k_{5}$, $k_{6}$, $k_{7}$ respectively. Let us write down inequalities that the sets must satisfy:

\begin{equation*}
 \begin{cases}
2(k_{0} - k_{4}) + \sqrt{2}(k_{1} + k_{7} - k_{3} - k_{5}) \leq 2; \\
2(k_{0} + k_{4}) - 2(k_{2} + k_{6}) \leq 2; \\
2(k_{0} + k_{2} + k_{4} + k_{6}) - 2(k_{1} + k_{3} + k_{5} + k_{7}) \leq 2; \\
k_{1} + 2k_{2} + 3k_{3} + 4k_{4} + 5k_{5}  +  6k_{6} +  7k_{7} < 8.
 \end{cases}
\end{equation*}

Let us write down all possible sets $k_{0}$, $k_{1}$, $k_{2}$, $k_{3}$, $k_{4}$, $k_{5}$, $k_{6}$, $k_{7}$ that do not satisfy the global Reid condition and satisfy the Lefschetz formula:

  \begin{center}
    \begin{minipage}{0.25\textwidth}
      \begin{enumerate}
        \item[1)]{1, 1, 1, 1, 0, 0, 0, 0;}
        \item[2)]{0, 2, 1, 1, 0, 0, 0, 0;}
        \item[3)]{1, 1, 0, 2, 0, 0, 0, 0.}
      \end{enumerate}
    \end{minipage}
  \end{center}

In case 2 and 3 we get that by the Lefschetz formula number of fixed points is not integer, and hence these automorphisms do not exist.
We get the only one automorphism of order 8 that satisfy our conditions. This is the automorphism with eigenvalues $1, \zeta, \textbf{i}, \zeta^{3}.$ Let us check whether the condition $\widetilde{E}$ is met:

$$1 + \zeta + \textbf{i} + \zeta^{3} = 1 + f_{1}\frac{\zeta}{1 - \zeta} + f_{3}\frac{\zeta^{3}}{1 - \zeta^{3}} + f_{5}\frac{\zeta^{5}}{1 - \zeta^{5}}+ f_{7}\frac{\zeta^{7}}{1 - \zeta^{7}}.$$
Modifying this expression we get
$$(2 + \sqrt{2})f_{1} + (1 + \textbf{i})f_{3} + \textbf{i}(\sqrt{2}f_{5} + (1 + \textbf{i})f_{7}) + (1 + 2\textbf{i})\sqrt{2} + (1 + 3\textbf{i})) = 0.$$
Let us multiply the equation by $1 - \frac{1}{\sqrt{2}} + \frac{\textbf{i}}{\sqrt{2}}$:
$$\textbf{i}(\sqrt{2} + 1 - \textbf{i})f_{1} + (-\sqrt{2} +1 + \textbf{i})f_{3} + \textbf{i}(\sqrt{2} -1 + \textbf{i})f_{5} + (-\sqrt{2} -1 - \textbf{i})f_{7} = (\sqrt{2} + 2)(1 + \textbf{i}).$$
We get that there are no such $f_{1}, f_{3}, f_{5}, f_{7}$ that satisfy the condition $\widetilde{E}$. Hence there is no such an automorphism of order 8, satisfying our conditions. 
\end{proof}

\begin{lemma} \label{4m6}
Let $C$ be a compact Riemann surface of genus $g = 4$, and $\sigma$ is an automorphism or order~9. Then $J/\langle\sigma\rangle$ is not uniruled.
\end{lemma}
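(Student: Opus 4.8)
The plan is to apply Theorem \ref{kl1}: it suffices to show that the action of $\langle\sigma\rangle$ on $H^{0,1}(C)$ satisfies the global Reid condition, i.e. that for every power $\sigma^{t}$ ($t=1,\dots,8$) the sum of the exponents $a_{j}$ (with $0\le a_{j}<|\sigma^{t}|$) of its eigenvalues is at least $|\sigma^{t}|$, which is $9$ for $t$ coprime to $9$ and $3$ for $t\in\{3,6\}$. Write $\zeta=e^{2\pi\mathbf{i}/9}$ and let $k_{0},\dots,k_{8}$ be the multiplicities of the eigenvalues $1,\zeta,\dots,\zeta^{8}$ of $\sigma$, so $\sum_{i}k_{i}=4$. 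First I would cut down the possibilities using integrality of the Lefschetz trace $T=\sum_{i}k_{i}(\zeta^{i}+\zeta^{-i})$. Here Corollary \ref{cor1} is unavailable, since $9$ is not prime and $\mathbb{Q}(\zeta)$ already contains the cube roots of unity; instead I would use that the real subfield is the cubic field generated by the numbers $c_{j}=\zeta^{j}+\zeta^{-j}$, whose three conjugates $c_{1},c_{2},c_{4}$ satisfy $c_{1}+c_{2}+c_{4}=0$ (the sum of the primitive ninth roots is $\mu(9)=0$) while $\{1,c_{1},c_{2}\}$ is a $\mathbb{Q}$-basis. Rationality of the irrational part of $T$ then forces $k_{1}+k_{8}=k_{2}+k_{7}=k_{4}+k_{5}=:s$. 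Since $k_{0}+(k_{3}+k_{6})+3s=4$ we get $s\in\{0,1\}$, and $s=0$ would make $\sigma^{3}$ act trivially on $H^{0,1}$, contradicting $\sigma^{3}\neq\mathrm{id}$ via Torelli; hence $s=1$ and $k_{0}+k_{3}+k_{6}=1$. This leaves two families: (A) $k_{0}=1$, $k_{3}=k_{6}=0$; and (B) $k_{0}=0$, $k_{3}+k_{6}=1$.

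In family (A) the eigenvalues are $1,\zeta^{i_{1}},\zeta^{i_{2}},\zeta^{i_{3}}$ with $i_{1},i_{2},i_{3}$ one from each of the pairs $\{1,8\},\{2,7\},\{4,5\}$, and I would test the condition $\widetilde{E}$ (Definition \ref{e}) on $\sigma$. The key elementary identity is $\mathrm{Re}\,\frac{\zeta^{u}}{1-\zeta^{u}}=-\tfrac12$ for every $u$, so $\widetilde{E}$ would force $\mathrm{Re}\,\chi(\sigma)=1-\tfrac12\sum_{u}f_{u}$. But in (A) one computes $\mathrm{Re}\,\chi(\sigma)=1+\tfrac12(c_{1}+c_{2}+c_{4})=1$, whence all $f_{u}=0$ and $\chi(\sigma)=1$, i.e. $\zeta^{i_{1}}+\zeta^{i_{2}}+\zeta^{i_{3}}=0$. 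A sum of three ninth roots vanishes only when the exponents form an arithmetic progression of step $3$, so only $\{i_{1},i_{2},i_{3}\}=\{1,4,7\}$ or $\{2,5,8\}$ survive; every other configuration in (A) fails $\widetilde{E}$ and so is realized by no curve. For the two survivors $\{0,1,4,7\}$ and $\{0,2,5,8\}$ I would simply verify Reid directly: under any generator $\sigma^{t}$ these two sets are permuted among themselves, with exponent sums $12$ and $15$, while $\sigma^{3},\sigma^{6}$ give order-$3$ Reid sums $3$ and $6$; so the global Reid condition holds.

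In family (B) the eigenvalues are $\zeta^{i_{0}},\zeta^{i_{1}},\zeta^{i_{2}},\zeta^{i_{3}}$ with $i_{0}\in\{3,6\}$ and the others one from each pair. Here I would argue directly, without $\widetilde{E}$: for a generator $\sigma^{t}$ multiplication by $t$ fixes $\{3,6\}$ and permutes the three pairs, so the Reid sum is a term $\ge 3$ coming from $i_{0}$ plus a sum of one element from each pair, which is always $\ge 1+2+4=7$; hence it is $\ge 10\ge 9$. For $t\in\{3,6\}$ the order-$3$ Reid sum is $\ge 3$. Thus the global Reid condition holds throughout (B).

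Combining the cases, every admissible configuration either violates $\widetilde{E}$ (hence is not realized by any curve) or satisfies the global Reid condition for all of $\langle\sigma\rangle$; by Theorem \ref{kl1}, $J/\langle\sigma\rangle$ is not uniruled. I expect the main obstacle to be the arithmetic over $\mathbb{Q}(\zeta_{9})$: unlike the prime-order lemmas, Corollary \ref{cor1} does not apply, so the integrality reduction to $s=1$ must instead come from the cubic real subfield together with $\mu(9)=0$; and it is precisely the identity $\mathrm{Re}\,\frac{\zeta^{u}}{1-\zeta^{u}}=-\tfrac12$ that turns $\widetilde{E}$ into a usable obstruction, isolating exactly the two exceptional exponent sets $\{1,4,7\}$ and $\{2,5,8\}$, which survive only because they already satisfy the Reid condition.
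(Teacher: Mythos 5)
Your proof is correct, but it takes a genuinely different route from the paper's. The paper proceeds by enumeration: it writes the Lefschetz inequalities for $\sigma$ and $\sigma^{3}$ together with the negation of the Reid inequality, tabulates the ten multiplicity vectors satisfying them, discards eight because the fixed-point count is not a non-negative integer, and kills the two survivors (eigenvalues $1,\zeta,\zeta^{2},\zeta^{4}$ and $1,\zeta,\zeta^{2},\zeta^{5}$) by the condition $\widetilde{E}$. You instead impose trace integrality structurally: since $\{1,c_{1},c_{2}\}$ is a $\mathbb{Q}$-basis of the real cubic subfield and $c_{1}+c_{2}+c_{4}=0$, rationality of the Lefschetz trace forces $k_{1}+k_{8}=k_{2}+k_{7}=k_{4}+k_{5}=s$, and faithfulness of the $H^{0,1}$-representation applied to $\sigma^{3}$ excludes $s=0$; this single computation replaces the paper's entire table (it is exactly the constraint behind the dashes there). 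Your second improvement is the identity $\mathrm{Re}\bigl(\zeta^{u}/(1-\zeta^{u})\bigr)=-\frac{1}{2}$, which converts $\widetilde{E}$ in family (A) into the clean dichotomy $\chi(\sigma)=1$, i.e.\ $\zeta^{i_{1}}+\zeta^{i_{2}}+\zeta^{i_{3}}=0$; the paper instead verifies non-solvability of the $\widetilde{E}$ system by hand for its two candidates, and those candidates are precisely family-(A) configurations with $\chi(\sigma)\neq 1$, so the two arguments agree where they overlap. The trade-off is that your route must verify the global Reid condition for all powers in the surviving sets $\{0,1,4,7\}$, $\{0,2,5,8\}$ (generator sums $12$ and $15$, order-$3$ sums $3$ and $6$) and throughout family (B) (at least $3+1+2+4=10$ for generators, and $\geq 3$ for $\sigma^{3},\sigma^{6}$), whereas the paper only ever lists Reid-violating vectors; your checks are right, including the key fact that multiplication by $t$ prime to $9$ permutes the pairs $\{1,8\},\{2,7\},\{4,5\}$ and preserves $\{3,6\}$. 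Both proofs ultimately rest on $\widetilde{E}$ as the realizability obstruction; yours is more structural and less numerical, at the cost of slightly more bookkeeping on the Reid side.
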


\begin{proof}

Let $\zeta = e^{\frac{2\pi \mathbf{i}}{9}}$, $\omega = e^{\frac{2\pi \mathbf{i}}{3}}$, and the quantity of eigenvalues 1, $\zeta$, $\zeta^{2}$, $\omega$, $\zeta^{4}$, $\zeta^{5}$, $\omega^{2}$, $\zeta^{7}$, $\zeta^{8}$ is equal to  $ k_{0}$, $k_{1}$, $k_{2}$, $k_{3}$, $k_{4}$, $k_{5}$, $k_{6}$, $k_{7}$, $k_{8}$ respectively.
Let us write down the inequalities which follow from the Lefschetz formula for $\sigma$ and $\sigma^3$:

\begin{equation*}
 \begin{cases}
 2k_{0} + 2(k_{1} + k_{8})\mathrm{cos}\Bigl{(}\frac{2\pi}{9}\Bigr{)} + 2(k_{2} + k_{7})\mathrm{cos}\Bigl{(}\frac{4\pi}{9}\Bigr{)} - (k_{3} + k_{6}) + 2(k_{4} + k_{5})\mathrm{cos}\Bigl{(}\frac{8\pi}{9}\Bigr{)} \leq 2;\\
2(k_{0} + k_{3} + k_{6}) - (k_{1} + k_{4} + k_{7}) - (k_{2} + k_{5} + k_{8}) \leq 2.
 \end{cases}
\end{equation*}

Let us write down all possible sets $k_{0}$, $k_{1}$, $k_{2}$, $k_{3}$, $k_{4}$, $k_{5}$, $k_{6}$, $k_{7}$,  $k_{8}$, that do not satisfy the global Reid condition and satisfy the Lefschetz formula. Let us denote by |$\mathrm{Fix}_{C}(\sigma)$| the quantity of fixed points of automorphism $\sigma$.
\begin{table}[H]
\begin{center}
\caption{$N = 9$}
\label{tab:table2}
\begin{tabular}{|c|c|c|c|c|c|c|c|c|c|} 
\hline
$k_{0}$ & $k_{1}$ & $k_{2}$ & $k_{3}$ & $k_{4}$ & $k_{5}$ & $k_{6}$ & $k_{7}$ & $k_{8}$ & |$\mathrm{Fix}_{C}(\sigma)$| \\
\hline
0 & 0 & 4 & 0 & 0 & 0 & 0 & 0 & 0 & — \\
\hline
1 & 0 & 2 & 1 & 0 & 0 & 0 & 0 & 0 & — \\
\hline
0 & 1 & 2 & 1 & 0 & 0 & 0 & 0 & 0 & — \\
\hline
0 & 2 & 0 & 2 & 0 & 0 & 0 & 0 & 0 & — \\
\hline
1 & 1 & 1 & 0 & 1 & 0 & 0 & 0 & 0 & 0 \\
\hline
0 & 2 & 1 & 0 & 1 & 0 & 0 & 0 & 0 & — \\
\hline
1 & 0 & 2 & 0 & 1 & 0 & 0 & 0 & 0 & — \\
\hline
1 & 1 & 0 & 1 & 1 & 0 & 0 & 0 & 0 & — \\
\hline
2 & 0 & 0 & 0 & 2 & 0 & 0 & 0 & 0 & — \\
\hline
1 & 1 & 1 & 0 & 0 & 1 & 0 & 0 & 0 & 0 \\
\hline
\end{tabular}
\end{center}
\end{table}

There are only two possible sets if the eigenvalues: 1, $\zeta$, $\zeta^{2}$, $\zeta^{5}$  and 1, $\zeta$, $\zeta^{2}$, $\zeta^{4}$. Both of them do not satisfy the condition $\widetilde{E}$.
\end{proof}

\begin{lemma}\label{4m7}
Let $C$ be a compact Riemann surface of genus $g = 4$, and $\sigma$ is an automorphism or order~10. Then $J/\langle\sigma\rangle$ is not uniruled.
\end{lemma}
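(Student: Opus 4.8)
The plan is to imitate exactly the bookkeeping used in the preceding lemmas for composite orders (\ref{4m4}, \ref{4m5}, \ref{4m6}). Let $\zeta = e^{2\pi\mathbf{i}/10}$ and let the eigenvalues of $\sigma$ on $H^{0,1}(C)$ be drawn from $\{\zeta^{0},\zeta^{1},\ldots,\zeta^{9}\}$ with multiplicities $k_{0},\ldots,k_{9}$, subject to $\sum_{j} k_{j} = 4$. Since $\sigma$ has order $10$, the relevant proper subgroups are generated by $\sigma^{2}$ (order $5$) and $\sigma^{5}$ (order $2$). First I would write down the system consisting of the Lefschetz-trace inequality for $\sigma$ itself, namely $\sum_{j} k_{j}(\zeta^{j}+\zeta^{-j}) = 2\sum_{j} k_{j}\cos\frac{2\pi j}{10} \leq 2$, together with the analogous Lefschetz inequalities for $\sigma^{2}$ (an order-$5$ element, already controlled by Corollary \ref{cor1}) and for $\sigma^{5}$ (the involution, controlled as in Lemma \ref{4m2}), and finally the \emph{negation} of the global Reid condition, $\sum_{j=1}^{9} j\,k_{j} < 10$.

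Next I would enumerate all nonnegative integer solutions $(k_{0},\ldots,k_{9})$ with $\sum k_{j}=4$ that simultaneously satisfy all three Lefschetz inequalities and violate the Reid condition — exactly the finite combinatorial search displayed as Tables 1 and 2 for $N=4,9$. For the surviving candidates I would rule them out by one of the three mechanisms already deployed in this section: (i) apply Corollary \ref{cor1} to the primitive part, so that the multiplicities of $\zeta,\zeta^{3},\zeta^{7},\zeta^{9}$ must coincide for the trace to lie in $\mathbb{Z}$; (ii) pass to the power $\sigma^{2}$ or $\sigma^{5}$ and invoke the constraints on elements of prime order (Lemma \ref{4m2}, Theorem \ref{fk}, or the Riemann–Hurwitz genus computation as in Lemma \ref{4m4}); or (iii) test the condition $\widetilde{E}$ of Definition \ref{e} directly, as in Lemmas \ref{4m5} and \ref{4m6}, showing the required $f_{u}$ cannot all be nonnegative integers. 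The key arithmetic simplification is that $\mathbb{Q}(\zeta_{10}) = \mathbb{Q}(\zeta_{5})$ has degree $4$ over $\mathbb{Q}$, so the rationality constraint on the trace is genuinely restrictive and should kill most candidates immediately.

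I expect the main obstacle to be the candidates whose primitive-fifth-root part already satisfies Corollary \ref{cor1} trivially (for instance configurations supported mainly on $\zeta^{0},\zeta^{2},\zeta^{4},\zeta^{5}$, which live in the real or in the order-$5$ subfield); for these the trace is automatically rational and neither Lefschetz nor the rationality lemma discards them, so the decisive step will be the $\widetilde{E}$ computation. Concretely, one writes $\chi(\sigma) = 1 + \sum_{u\in I(10)} f_{u}\frac{\zeta^{u}}{1-\zeta^{u}}$ with $I(10)=\{1,3,7,9\}$, clears denominators using the minimal polynomial of $\zeta$ over $\mathbb{Q}$, and reads off the system for $(f_{1},f_{3},f_{7},f_{9})$; the obstruction is then that the unique rational solution forces some $f_{u}<0$, precisely as occurred for the orders $8$ and $9$. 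I would finish by noting that any candidate eigenvalue set passing all of these tests would contradict either Lemma \ref{example}-type inequalities after reduction to $\sigma^{2}$ or the prime-order fixed-point count of Theorem \ref{fk}, so that no order-$10$ automorphism violates the global Reid condition and $J/\langle\sigma\rangle$ is therefore not uniruled by Theorem \ref{kl1}.
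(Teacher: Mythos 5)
Your plan follows essentially the same route as the paper's proof: the identical system (Lefschetz inequalities for $\sigma$, $\sigma^{2}$, $\sigma^{5}$ together with the negated Reid condition $\sum_{j} j\,k_{j} < 10$), the same finite enumeration of multiplicity vectors, and elimination of survivors by the standard toolkit of this section. The only divergence is in prediction, not substance: the paper never needs the $\widetilde{E}$ computation you expected to be decisive, since both surviving candidates, $(0,1,1,2,0,\ldots,0)$ and $(0,2,0,1,1,0,\ldots,0)$, are killed by your mechanism (ii) alone --- their squares are order-$5$ automorphisms whose Lefschetz fixed-point count fails to be an integer (equivalently, the rationality constraint of Corollary \ref{cor1} applied to $\sigma^{2}$).
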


\begin{proof}

Let $1, \zeta,  \zeta^{2}, \zeta^{3} ,  \zeta^{4}, \zeta^{5},  \zeta^{6}, \zeta^{7},  \zeta^{8},  \zeta^{9}$ be the eigenvalues of  $\sigma$, where $\zeta$  is a primitive root modulo 10. The quantity of eigenvalues is equal to $ k_{0}$, $k_{1}$, $k_{2}$, $k_{3}$, $k_{4}$, $k_{5}$, $k_{6}$, $k_{7}$, $k_{8}$, $k_{9}$ respectively.

Let us write down the inequalities which follow from the Lefschetz formula for $\sigma$, $\sigma^2$ and $\sigma^5$  and inequality for sets which do not satisfy the global Reid condition:
\begin{equation*}
 \begin{cases}
2k_{0} + 2(k_{1} + k_{9})\mathrm{cos}\Bigl{(}\frac{2\pi}{10}\Bigr{)} + 2(k_{2} + k_{8})\mathrm{cos}\Bigl{(}\frac{4\pi}{10}\Bigr{)}  + 2(k_{3} + k_{7})\mathrm{cos}\Bigl{(}\frac{6\pi}{10}\Bigr{)} + 2(k_{4} + k_{6})\mathrm{cos}\Bigl{(}\frac{8\pi}{10}\Bigr{)} - 2k_{5} \leq 2; \\
(k_{0} + k_{5}) + \mathrm{cos}\Bigl{(}\frac{2\pi}{5}\Bigr{)}(k_{1} + k_{4} + k_{6} + k_{9}) + \mathrm{cos}\Bigl{(}\frac{4\pi}{5}\Bigr{)}(k_{2} + k_{3} + k_{7} + k_{8}) \leq 1; \\
k_{0} + k_{2} + k_{4} + k_{6} + k_{8} - k_{1} - k_{3} - k_{5} - k_{7} - k_{9} \leq 1; \\
k_{1} + 2k_{2} + 3k_{3} + 4k_{4} + 5k_{5}  +  6k_{6} +  7k_{7} +  8k_{8}  + 9k_{9} < 10.
 \end{cases}
\end{equation*}

Let us write down all possible sets $k_{i}$, $i = 0, \ldots, 9$ that do not satisfy the global Reid condition and satisfy the Lefschetz formula.

\begin{table}[h]
  \begin{center}
   \caption{$N = 10$}
   \label{tab:table3}
 \begin{tabular}{|c|c|c|c|c|c|c|c|c|c|c|} 
      \hline
      $k_{0}$ & $k_{1}$ & $k_{2}$ & $k_{3}$ & $k_{4}$ & $k_{5}$ & $k_{6}$ & $k_{7}$ & $k_{8}$ &  $k_{9}$ & |$\mathrm{Fix}_{C}(\sigma)$| \\
      \hline
0 & 1 & 1 & 2 & 0 & 0 & 0 & 0 & 0 & 0 & 1\\
\hline
0 & 2 & 0 & 1 & 1 & 0 & 0 & 0 & 0 & 0 & 1\\
\hline
1 & 0 & 0 & 3 & 0 & 0 & 0 & 0 & 0 & 0 & $-$\\
\hline
1 & 0 & 1 & 2 & 0 & 0 & 0 & 0 & 0 & 0 & $-$\\
\hline
1 & 1 & 0 & 1 & 1 & 0 & 0 & 0 & 0 & 0 & $-$\\
\hline
    \end{tabular}
  \end{center}
\end{table}

We get two possible sets:

  \begin{center}
    \begin{minipage}{0.3\textwidth}
      \begin{enumerate}
        \item[1)]{0, 1, 1, 2, 0, 0, 0, 0, 0, 0;}
        \item[2)]{0, 2, 0, 1, 1, 0, 0, 0, 0, 0.}
      \end{enumerate}
    \end{minipage}
  \end{center}

Both of them does not exist because the squared automorphisms have non-integer amount of fixed points.
\end{proof}

\begin{lemma}\label{4m8}
Let $C$ be a compact Riemann surface of genus $g = 4$, and $\sigma$ is an automorphism or order~12. Then $J/\langle\sigma\rangle$ is not uniruled.
\end{lemma}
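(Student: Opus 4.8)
The plan is to follow the same scheme as Lemmas \ref{4m3}--\ref{4m7}. Write $\zeta = e^{\frac{2\pi \mathbf{i}}{12}}$ and let $k_0, \ldots, k_{11}$ denote the multiplicities with which the eigenvalues $1, \zeta, \ldots, \zeta^{11}$ of $\sigma$ occur on $H^{0,1}(C)$, so that $\sum_{j=0}^{11} k_j = 4$. Since $\sigma$ acts faithfully on $H^{0,1}(C)$ by Torelli, the tuple must satisfy $\gcd(\{j : k_j > 0\} \cup \{12\}) = 1$ for $\sigma$ to actually have order $12$. Because $\sigma$ has order $12$, all of $\sigma^2, \sigma^3, \sigma^4, \sigma^6$ exist, of orders $6, 4, 3, 2$ respectively, and I would record the Lefschetz inequality for each of them together with the one for $\sigma$ itself. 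Using $\zeta^j + \zeta^{-j} = 2\cos(\pi j/6)$, the inequality for $\sigma$ reads
\[
2k_0 + \sqrt{3}\,(k_1 + k_{11} - k_5 - k_7) + (k_2 + k_{10} - k_4 - k_8) - 2k_6 \leq 2,
\]
and the inequalities for $\sigma^2, \sigma^3, \sigma^4, \sigma^6$ are obtained analogously by replacing $j$ with $2j, 3j, 4j, 6j$ modulo $12$ inside the cosine; for all four of these powers the trace is automatically rational.

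Next I would extract the integrality and Reid constraints. Each Lefschetz trace equals $2 - |\mathrm{Fix}_C(\sigma^d)|$, hence is an integer; for the trace of $\sigma$ this forces the irrational part to vanish, so that $k_1 + k_{11} = k_5 + k_7$ (the analogue of Corollary \ref{cor1} for the vanishing of the $\sqrt{3}$-coefficient over $\mathbb{Q}(\zeta_{12})$, equivalently the requirement that $\sigma$ have an integral number of fixed points). I would then impose the negation of the global Reid condition, namely $\sum_{j=0}^{11} j\,k_j < 12$, and enumerate all tuples $(k_0, \ldots, k_{11})$ that simultaneously violate Reid, satisfy the five Lefschetz inequalities, and yield a non-negative integral number of fixed points for every power. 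As in the previous lemmas this should narrow the possibilities to a short list, which I would display in a table recording $(k_0,\ldots,k_{11})$ together with $|\mathrm{Fix}_C(\sigma)|$, in the style of Tables \ref{tab:table1}--\ref{tab:table3}.

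Finally I would eliminate each survivor. Several tuples should already be excluded because a power of $\sigma$ produces a configuration that cannot occur: if $\sigma^4$ (of order $3$) has an eigenvalue distribution forbidden by Lemma \ref{example}, or if $\sigma^4$ or $\sigma^6$ (of prime order) has exactly one fixed point, then Lemma \ref{example} and Theorem \ref{fk} respectively rule the tuple out. The remaining candidates I would test against the condition $\widetilde{E}$ of Definition \ref{e}: writing $\chi(\sigma) = 1 + \sum_{u \in \{1,5,7,11\}} f_u \frac{\zeta^u}{1 - \zeta^u}$ and clearing denominators over $\mathbb{Q}(\zeta_{12})$, I expect to find no admissible non-negative integers $(f_1, f_5, f_7, f_{11})$, exactly as in Lemmas \ref{4m3} and \ref{4m5}. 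For any candidate that resists both of these, I would compute the genus of $C/\langle \sigma^4 \rangle$ or $C/\langle \sigma^6 \rangle$ by the Riemann--Hurwitz formula and derive a contradiction from a negative or non-integral quotient genus, as was done for orders $6$ and $10$ in Lemmas \ref{4m4} and \ref{4m7}.

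The main obstacle I anticipate is the $\widetilde{E}$ computation over $\mathbb{Q}(\zeta_{12})$. Unlike the order-$4$ and order-$8$ cases, the relevant cyclotomic field is spanned by both $\mathbf{i}$ and $\sqrt{3}$, so after clearing denominators the single equation splits into four rational equations, and checking that the resulting linear system has no solution in non-negative integers is the most delicate bookkeeping. The enumeration in the middle step is large in principle, but the combination of the relation $k_1 + k_{11} = k_5 + k_7$, the Reid inequality, and integrality of the fixed-point counts of the prime-order powers should cut it down to a handful of cases, after which the eliminations above finish the argument.
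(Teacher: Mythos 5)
Your proposal follows essentially the same route as the paper's proof of Lemma \ref{4m8}: the same enumeration of multiplicity tuples $(k_0,\ldots,k_{11})$ constrained by the Lefschetz inequalities for powers of $\sigma$, the negated Reid condition $\sum_j j\,k_j < 12$, and integrality of fixed-point counts, followed by the same elimination tools the paper actually uses (Lemma \ref{example} applied to $\sigma^4$, negative fixed-point counts for $\sigma^6$, and the condition $\widetilde{E}$). The only difference is one of execution rather than method — the paper carries out the enumeration (ten tuples, five survivors) and the $\widetilde{E}$ check explicitly, while you leave these finite computations as anticipated work; your setup, including the rationality constraint $k_1 + k_{11} = k_5 + k_7$ and the extra $\sigma^3$ inequality, is correct and if anything slightly sharpens the paper's system.
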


\begin{proof}

Let $1, \zeta,  -\omega, \mathbf{i} ,  \omega^{2}, \zeta^{5},  -1, \zeta^{7},  \omega,  -\mathbf{i}, -\omega^{2}, \zeta^{11} $be the eigenvalues of  $\sigma$, where $\zeta$is a primitive root modulo 12. The quantity of eigenvalues is equal to  $ k_{0}$, $k_{1}$, $k_{2}$, $k_{3}$, $k_{4}$, $k_{5}$, $k_{6}$, $k_{7}$, $k_{8}$, $k_{9}$, $k_{10}$, $k_{11}$  respectively.

Let us write down inequalities that the sets must satisfy: 
\begin{equation*}
 \begin{cases}
2k_{0} + \sqrt{3}(k_{1} + k_{11} - k_{5} - k_{7})+ k_{2} - k_{4} - 2k_{6} - k_{8} + k_{10} \leq 2; \\
2(k_{0} + k_{6}) + (k_{1} + k_{4} + k_{5} + k_{7}) - (k_{2} + k_{4} + k_{8} + k_{10}) - 2(k_{3} + k_{9}) \leq 2; \\
2(k_{0} + k_{3} + k_{6} + k_{9}) - k_{1} - k_{4} - k_{7} - k_{10} - k_{2} - k_{5} - k_{8} - k_{11} \leq 2;\\
k_{0} + k_{2} + k_{4} + k_{6} + k_{8} + k_{10} - k_{1} - k_{3} - k_{5} - k_{7} - k_{9} - k_{11} \leq 1; \\
k_{1} + 2k_{2} + 3k_{3} + 4k_{4} + 5k_{5}  +  6k_{6} +  7k_{7} +  8k_{8} + 9k_{9} + 10k_{10}+ 11k_{11} < 12.
 \end{cases}
\end{equation*}

Let us write down all possible sets of numbers $k_{i}$, $i = 0, \ldots, 11$, which satisfy these inequalities:

\begin{table}[h]
  \begin{center}
    \caption{$N = 12$}
    \label{tab:table4}
    \begin{tabular}{|c|c|c|c|c|c|c|c|c|c|c|c|c|} 
      \hline
      $k_{0}$ & $k_{1}$ & $k_{2}$ & $k_{3}$ & $k_{4}$ & $k_{5}$ & $k_{6}$ & $k_{7}$ & $k_{8}$ &  $k_{9}$ &  $k_{10}$ & $k_{11}$ & |$\mathrm{Fix}_{C}(\sigma)$|  \\
      \hline
0 & 0 & 2 & 2 & 0 & 0 & 0 & 0 & 0 & 0 & 0 & 0 & 0\\
\hline
0 & 1 & 0 & 2 & 1 & 0 & 0 & 0 & 0 & 0 & 0 & 0 & $-$ \\
\hline
0 & 1 & 1 & 1 & 0 & 1 & 0 & 0 & 0 & 0 & 0 & 0 & 1 \\
\hline
0 & 1 & 1 & 1 & 1 & 0 & 0 & 0 & 0 & 0 & 0 & 0 & $-$ \\
\hline
0 & 1 & 2 & 0 & 0 & 1 & 0 & 0 & 0 & 0 & 0 & 0 & 0\\
\hline
0 & 2 & 0 & 1 & 0 & 0 & 1 & 0 & 0 & 0 & 0 & 0 & $-$ \\
\hline
0 & 2 & 0 & 1 & 0 & 1 & 0 & 0 & 0 & 0 & 0 & 0 & $-$\\
\hline 
1 & 0 & 1 & 1 & 0 & 1 & 0 & 0 & 0 & 0 & 0 & 0 & $-$\\
\hline
1 & 1 & 0 & 1 & 0 & 0 & 0 & 1 & 0 & 0 & 0 & 0 & 0 \\
\hline
1 & 1 & 0 & 1 & 0 & 1 & 0 & 0 & 0 & 0 & 0 & 0 & 0\\
\hline
    \end{tabular}
  \end{center}
\end{table}

We get 5 possible sets:

  \begin{center}
    \begin{minipage}{0.35\textwidth}
      \begin{enumerate}
        \item[1)]{0, 0, 2, 2, 0, 0, 0, 0, 0, 0, 0, 0;}
        \item[2)]{0, 1, 1, 1, 0, 1, 0, 0, 0, 0, 0, 0;}
        \item[3)]{0, 1, 2, 0, 0, 1, 0, 0, 0, 0, 0, 0;}
        \item[4)]{1, 1, 0, 1, 0, 0, 0, 1, 0, 0, 0, 0;}
        \item[5)]{1, 1, 0, 1, 0, 1, 0, 0, 0, 0, 0, 0.}
      \end{enumerate}
    \end{minipage}
  \end{center}

Sets 1 and 4 do not exist since the corresponding automorphisms in degree 4 do not exist according to lemma \ref{example}. Set 3 does not exist since its sixth degree does not exist since its amount of fixed points is negative  according to the Lefschetz formula. The second set does not satisfy the condition $\widetilde{E}$. Thus, $J/\langle\sigma\rangle$  is not uniruled.
\end{proof}

Let us prove Theorem \ref{m2}.

\begin{proof}

Let us consider possible orders of $N$. According to \ref{order} value $N$ can vary from 2 to 18. We have already studied the following cases and it has been already shown that there is no automorphisms of order equal or smaller then 12 such that the quotient is uniruled.

$N = 3:$ \, \, lemma $\ref{4m1};$

$N = 2, 5, 7, 11, 13:$ lemma $\ref{4m2};$

$N = 4:$ \, \, lemma $\ref{4m3};$

$N = 6:$ \, \, lemma $\ref{4m4};$

$N = 8:$ \, \, lemma $\ref{4m5};$

$N = 9:$ \, \, lemma $\ref{4m6};$

$N = 10:$ \, lemma $\ref{4m7};$

$N = 12:$ \, lemma $\ref{4m8}.$

Let us consider the remaining cases.\\

Let $N = 14$. According to \ref{max_ord} (paragraph 4) there is no automorphisms of order 14 on a compact Riemann surface of order 4. \\

Let $N = 15$. According to \ref{max_ord} (paragraph 4) there is only one compact Riemann surface of genus~4:
	$$y^{3} = x(x^{5} - 1)$$
and an automorphism of order $15$:
	$$\sigma(x, y) = (\zeta^{3} x, \zeta y).$$
The basis of $H^{0, 1}(C) \cong H^{0}(C, K_{C})$ is $\frac{dx}{y^{2}}, \frac{xdx}{y^{2}}, \frac{x^{2}dx}{y^{2}}, \frac{dx}{y}$. Then the eigenvalues of $\sigma$ are $\zeta, \zeta^{2}, \zeta^{4}, \zeta^{7}.$ They do not satisfy the global Reid condition.

Let $N = 16$. According to \ref{max_ord} (paragraph 2)  there is only one compact Riemann surface of genus~4:
	$$y^{2} = x(x^{8} - 1)$$ 
and an automorphism of order $16$:
	$$\sigma(x, y) = (\zeta^{2} x, \zeta y).$$
The basis of $H^{0}(C, K_{C})$ IS $\frac{dx}{y}, \frac{xdx}{y}, \frac{x^{2}dx}{y}, \frac{x^{3}dx}{y}$.  Then the eigenvalues of $\sigma$ are $\zeta, \zeta^{3}, \zeta^{5}, \zeta^{7}.$ They do not satisfy the global Reid condition.
Thus the quotient of Jacobian is not uniruled.\\

Let $N = 18$. According to \ref{max_ord} (paragraph 1) there is only one compact Riemann surface of genus~4:
	$$y^{2} = x(x^{9} - 1)$$
and an automorphism of order 18:
	$$\sigma(x, y) = (\zeta^{2} x, \zeta y).$$
The basis of $H^{0}(C, K_{C})$ is $\frac{dx}{y}, \frac{xdx}{y}, \frac{x^{2}dx}{y}, \frac{x^{3}dx}{y}$.Then the eigenvalues of $\sigma$ are $\zeta, \zeta^{3}, \zeta^{5}, \zeta^{7}$. The global Reid condition is not met. Hence $J/G$ is uniruled.\\
\end{proof}

\section{Curves of genus 3}\label{three}

{In this section we will prove Theorem \ref{m3}.}

\begin{lemma}\label{5m1}
Let $C$ be a compact Riemann surface of genus $g = 3$, and $\sigma$ is an automorphism of prime order. Then $J/\langle\sigma\rangle$ uniruled if and only if either the order of automorphism is 2 and the eigenvalues of $\sigma$ are $1, 1, -1$ or the order of automorphism is 7 and the eigenvalues of  $\sigma$ are $\zeta, \zeta^{2}, \zeta^{3}$.
\end{lemma}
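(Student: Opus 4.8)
The plan is to mirror the method of the genus-$4$ lemmas: by Theorem \ref{kl1}, $J/\langle\sigma\rangle$ fails to be uniruled precisely when the local Reid condition holds for the action on $T_0(J)\cong H^{0,1}(C)$, so for each admissible eigenvalue multiset I would compare $\sum a_i$ with the order $r$ and, for the multisets that violate Reid, decide whether such an automorphism can actually exist. First I would restrict the prime orders: by Theorem \ref{fb} a prime $p$ occurring as the order of an automorphism of a genus-$3$ surface satisfies $p\le g=3$, or $p=g+1=4$, or $p=2g+1=7$; since $4$ is not prime, the only cases are $p\in\{2,3,7\}$.

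For $p=2$ the eigenvalues are $1$ (multiplicity $k$) and $-1$ (multiplicity $3-k$); the Lefschetz formula gives $4k-6=2-|\mathrm{Fix}_C(\sigma)|$, whence $k\le 2$, while the Reid condition reads $3-k\ge 2$, i.e. $k\le 1$. Thus Reid fails exactly for $k=2$, giving eigenvalues $1,1,-1$, and I would confirm realizability either by Theorem \ref{cr_e_p} (the value $\chi(\sigma)=1$ satisfies $\widetilde{E}$ with all $f_u=0$) or directly by Riemann--Hurwitz as an unramified double cover of a genus-$2$ curve; for $k=0,1$ the condition holds and the quotient is not uniruled. For $p=3$ I would invoke Lemma \ref{example}: writing the multiplicities of $1,\omega,\omega^2$ as $(k_0,k_1,k_2)$ with sum $3$, the Reid sum is $k_1+2k_2$, so Reid fails only for $(k_0,k_1,k_2)\in\{(2,1,0),(1,2,0),(2,0,1)\}$, and a direct check shows each of these violates one of the inequalities of Lemma \ref{example}; hence none is realized, and for $p=3$ the quotient is never uniruled.

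For $p=7$ I would use Corollary \ref{cor1}: the six eigenvalues on $H^1(C,\mathbb{C})$ occur in conjugate pairs $\{\zeta^a,\zeta^{-a}\}$, and integrality of the Lefschetz trace $c_0+\sum_{j=1}^{6}c_j\zeta^j$ forces $c_1=\cdots=c_6$; since the total is $6$, the only nontrivial configuration is $c_0=0$ with each power occurring once, so $H^{0,1}(C)$ selects one representative from each pair $\{1,6\},\{2,5\},\{3,4\}$. Running through the eight choices, the Reid sum $a_1+a_2+a_3\ge 7$ fails only for $\{1,2,3\}$, i.e. eigenvalues $\zeta,\zeta^2,\zeta^3$. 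Finally I would verify this case is realized: the Lefschetz formula gives trace $-1$, hence $|\mathrm{Fix}_C(\sigma)|=3$, consistent with Theorem \ref{fk}, and Riemann--Hurwitz presents it as a $7$-fold cover of $\mathbb{P}^1$ branched at three points (equivalently $\zeta+\zeta^2+\zeta^3$ satisfies $\widetilde{E}$, so Theorem \ref{cr_e_p} applies).

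Collecting the three orders gives exactly the two uniruled configurations claimed. I expect the main obstacle to be the $p=7$ analysis: pinning down all admissible eigenvalue sets via Corollary \ref{cor1} and then confirming that the single Reid-violating set $\{\zeta,\zeta^2,\zeta^3\}$ is genuinely realized, rather than merely numerically consistent, is the delicate point, whereas for $p=2,3$ existence is either immediate or excluded outright by Lemma \ref{example}.
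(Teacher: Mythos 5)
Your proposal is correct and follows essentially the same route as the paper: restrict to $p\in\{2,3,7\}$ via Theorem \ref{fb}, constrain eigenvalue multisets by the Lefschetz formula and Corollary \ref{cor1}, identify the Reid-violating sets, exclude the order-$3$ candidates by Lemma \ref{example}, and realize $1,1,-1$ and $\zeta,\zeta^{2},\zeta^{3}$ via the condition $\widetilde{E}$ and Theorem \ref{cr_e_p}. Your $p=7$ enumeration is in fact more explicit than the paper's rather terse treatment, but it is the same argument.
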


\begin{proof}
According to \ref{fb} there are three possible automorphisms  of prime order: 2, 3 and 7.

Let $N = 2$. If the eigenvalues of  $\sigma$ are

Then from the Lefschetz formula it follows that
$$k + (3 - k)(-1) + k + (3 - k)(-1) = 4k - 6 \leq 2. $$

We get that $k \leq 2$.

If $k = 0, 1$, the global Reid condition is met.
We get that there is only one possible set of eigenvalues: $1, 1, -1$. This set satisfies the condition $\widetilde{E}$. According to \ref{cr_e_p} such automorphism is realized.
Let $N = 3$. Let $\omega = e^{\frac{2\pi\mathbf{i}}{3}}$ and the quantity of eigenvalues $1, \omega, \omega^{2}$ of $\sigma$ be equal to $k, l, 3 - k - l$. From the Lefschetz formula it follows that 
$$k + l\omega + (3 - k - l)\omega^{2} + k + l\omega^{2} + (3 - k - l)\omega = 2k - l - (3 - k - l) = 3k - 3 \leq 2. $$

We get that $k \leq 1.$ If $k = 1$ then the global Reid condition has the following form
$$l + 2(2 - l) = 4 - l \geq 3.$$ 

The inequality does not hold only if $l = 2, 3.$ Since $k + l \leq 3$ we get that in this case there is only one possible set of eigenvalues: $1, \omega, \omega.$ This automorphism does not satisfy inequalities from \ref{example}.

If  $k = 0$, then the global Reid condition does not hold only if $l \geq 4$; thus, there is no such automorphism.

If $N = 7$, there is one possible set of eigenvalues: $\zeta, \zeta^{2}, \zeta^{3}$. This set satisfies the condition $\widetilde{E}$. This automorphism is realized due to Theorem \ref{cr_e_p}.
\end{proof}

\begin{lemma}\label{5m3}
Let C be a compact Riemann surface of genus g = 3, and $\sigma$ is an automorphism of order~4. Then $J/\langle\sigma\rangle$ is not uniruled.
\end{lemma}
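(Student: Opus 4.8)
The plan is to invoke Theorem \ref{kl1}: since $J$ is an abelian variety, the action of $\langle\sigma\rangle$ on every tangent space is isomorphic to its action on $T_{0}(J)=H^{0, 1}(C)$, and the origin is fixed by the whole group, so $J/\langle\sigma\rangle$ fails to be uniruled exactly when the eigenvalues of $\sigma$ (and of its powers) obey the local Reid condition; for the configurations that violate it, it will suffice to show that no such automorphism exists. I would write the eigenvalues of $\sigma$ on the three-dimensional space $H^{0, 1}(C)$ as $1,\mathbf{i},-1,-\mathbf{i}$ with multiplicities $k,l,m,n$, where $k+l+m+n=3$. The Lefschetz formula applied to $\sigma$ gives $2(k-m)\le 2$, i.e. $k\le m+1$, and applied to the involution $\sigma^{2}$ (whose eigenvalues are $1$ with multiplicity $k+m$ and $-1$ with multiplicity $l+n$) gives $4(k+m)-6\le 2$, i.e. $k+m\le 2$; the latter also encodes that $\sigma^{2}\neq\mathrm{id}$, since an element of order exactly $4$ must have a primitive fourth root of unity among its eigenvalues.

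Next I would write down the local Reid condition for $\sigma$, which reads $l+2m+3n\ge 4$, and enumerate the multisets of three eigenvalues with $l+2m+3n\le 3$ that also satisfy $k\le m+1$ and $k+m\le 2$. A short case check leaves exactly three candidate configurations: $1,\mathbf{i},\mathbf{i}$; $1,\mathbf{i},-1$; and $\mathbf{i},\mathbf{i},\mathbf{i}$. To discard the first and third I would substitute $\chi(\sigma)$ into the Eichler trace formula for $m=4$, using $\frac{\mathbf{i}}{1-\mathbf{i}}=\frac{-1+\mathbf{i}}{2}$ and $\frac{-\mathbf{i}}{1+\mathbf{i}}=\frac{-1-\mathbf{i}}{2}$; comparing real and imaginary parts forces values of $f_{1},f_{3}$ that are not both non-negative integers (one gets $f_{1}=f_{3}=0$ against a nonzero imaginary part in the first case, and $f_{3}=-2$ in the third), so these configurations fail the condition $\widetilde{E}$ and are not realized.

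The main obstacle is the remaining configuration $1,\mathbf{i},-1$: unlike its genus-$4$ analogue in Lemma \ref{4m3}, it satisfies $\widetilde{E}$ (one finds $f_{1}=2$, $f_{3}=0$), so the Eichler trace formula alone cannot exclude it. Here I would use a fixed-point comparison instead. The Lefschetz formula gives $|\mathrm{Fix}_{C}(\sigma)|=2$ for this $\sigma$, whereas $\sigma^{2}$ has eigenvalues $1,1,-1$ and hence $|\mathrm{Fix}_{C}(\sigma^{2})|=0$. Since every fixed point of $\sigma$ is fixed by $\sigma^{2}$, we have $\mathrm{Fix}_{C}(\sigma)\subseteq\mathrm{Fix}_{C}(\sigma^{2})$, which is impossible. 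Thus no order-$4$ automorphism realizes any Reid-violating configuration, the global Reid condition holds for every order-$4$ automorphism that does exist, and $J/\langle\sigma\rangle$ is not uniruled. Alternatively one could appeal to the stronger condition $\widetilde{RH}$ of Theorem \ref{cr_e}, which is available because $\langle\sigma\rangle$ is cyclic of non-prime order, but the fixed-point argument is more transparent.
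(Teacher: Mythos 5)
Your treatment of the three configurations that violate the Reid condition for $\sigma$ itself is correct, and you have in fact caught an inaccuracy in the paper: the set $1,\mathbf{i},-1$ \emph{does} satisfy the Eichler Trace Formula at $\sigma$ (with $f_{1}=2$, $f_{3}=0$, exactly as you compute), whereas the paper asserts that none of the three candidates satisfies $\widetilde{E}$. Your replacement argument — Lefschetz gives $|\mathrm{Fix}_{C}(\sigma)|=2$ while $\sigma^{2}$ with eigenvalues $1,1,-1$ has $|\mathrm{Fix}_{C}(\sigma^{2})|=0$, contradicting $\mathrm{Fix}_{C}(\sigma)\subseteq\mathrm{Fix}_{C}(\sigma^{2})$ — is valid and is an elementary substitute for the $\widetilde{RH}$ obstruction that the paper invokes (for composite order, $\widetilde{E}$ alone is not sufficient for realization, cf.\ Theorem \ref{cr_e}, so the failure must indeed be detected at the level of $\widetilde{RH}$ or of fixed points).

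There is, however, a genuine gap. You correctly state at the outset that the global Reid condition must hold for $\sigma$ \emph{and its powers}, but your enumeration only lists eigenvalue sets for which $\sigma$ itself violates Reid ($l+2m+3n\le 3$). Since $\langle\sigma\rangle$ contains the involution $\sigma^{2}$, the quotient $J/\langle\sigma\rangle$ is uniruled whenever $\sigma^{2}$ has eigenvalues $1,1,-1$ (Reid sum $1<2$), even if $\sigma$ and $\sigma^{3}$ both satisfy Reid. Under your Lefschetz constraints this happens precisely when $k+m=2$ and $l+n=1$, and the configurations escaping your list are $-1,-1,\mathbf{i}$, i.e.\ $(k,l,m,n)=(0,1,2,0)$, together with its conjugate $-1,-1,-\mathbf{i}$: here the Reid sums for $\sigma$ and $\sigma^{3}$ are $5$ and $7$, so neither appears among your candidates, yet the square violates Reid. (The remaining sets with square $1,1,-1$, namely $1,\pm\mathbf{i},-1$, are covered, since one is your second candidate and the other has it as its cube.) The missed configuration even satisfies $\widetilde{E}$ at every power ($f_{1}=4$, $f_{3}=2$ at $\sigma$), so no Eichler computation removes it; the paper excludes it in the closing paragraph of its proof via the condition $\widetilde{RH}$, using $r_{\chi,1}(\sigma^{2})=0$. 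Fortunately your own fixed-point mechanism closes the gap with no new ideas: for $-1,-1,\mathbf{i}$ the Lefschetz formula gives trace $-4$ on $H^{1}(C,\mathbb{C})$, hence $|\mathrm{Fix}_{C}(\sigma)|=6$, while $|\mathrm{Fix}_{C}(\sigma^{2})|=0$, again contradicting $\mathrm{Fix}_{C}(\sigma)\subseteq\mathrm{Fix}_{C}(\sigma^{2})$. But as written, your concluding claim that ``the global Reid condition holds for every order-$4$ automorphism that does exist'' is not yet justified, because the quantification over $\sigma^{2}$ was never carried out in the enumeration.
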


\begin{proof}

Let the quantity of eigenvalues $1, \mathbf{i}, -1, -\mathbf{i}$ of $\sigma$ be equal to $k, l, m,$ $ 3 - k - l - m$ respectively.

Let us write down the inequalities following from the Lefschetz formula for automorphisms $\sigma$ and $\sigma^{2}$ and inequality opposite to the global Reid condition:

\begin{equation*}
 \begin{cases}
k \leq m + 1; \\
k + m \leq 2; \\
l + 2m + 3(3 - k - l - m) < 4. 
 \end{cases}
\end{equation*}

There are three possible sets satisfying the system:

  \begin{center}
    \begin{minipage}{0.25\textwidth}
      \begin{enumerate}
        \item[1)]{0, 3, 0, 0;}
        \item[2)]{1, 1, 1, 0;}
        \item[3)]{1, 2, 0, 0.}
      \end{enumerate}
    \end{minipage}
  \end{center}

None of then satisfies the condition $\widetilde{E}$, thus none of them is realized.

Among the sets which satisfy the Reid condition and which squares are equal to $1, 1, -1$, there is also no possible set. Indeed, the coefficient $r_{\chi, 1}(\sigma^{2})$ from the condition  $\widetilde{E}$ is equal to zero. Thus for the condition $\widetilde{RH}$ to be met, the coefficients $r_{\chi, u}(\sigma)$, $u = 1, 5$ must be equal to zero. There are no such sets.
\end{proof}

\begin{lemma}\label{5m4}
Let C be a compact Riemann surface of genus  g = 3, and $\sigma$ is an automorphism of order~6. Then  $J/\langle\sigma\rangle$ is uniruled if and only if the eigenvalues of action $\sigma$ are equal to $-\omega, \omega^{2}, \omega^{2}$, where $\omega = e^{\frac{2\pi \mathbf{i}}{3}}$.
\end{lemma}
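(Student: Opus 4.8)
The plan is to follow exactly the same template used in Lemmas \ref{4m4}, \ref{5m3} and the other order-$6$ analyses: set up the eigenvalue counts, extract the constraints from the Lefschetz formula applied to $\sigma$ and its powers, intersect these with the \emph{negation} of the global Reid condition, and then filter the surviving candidate eigenvalue tuples through realizability criteria. Concretely, writing $\omega = e^{\frac{2\pi\mathbf{i}}{3}}$, I would let $k_{0},k_{1},k_{2},k_{3},k_{4},k_{5}$ denote the multiplicities of the sixth roots of unity $1,-\omega,\omega^{2},-1,\omega,-\omega^{2}$ among the three eigenvalues of $\sigma$ on $H^{0,1}(C)$, so that $\sum k_i = 3$. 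The system of inequalities is the genus-$3$ analogue of the one in Lemma \ref{4m4}: three Lefschetz inequalities for $\sigma,\sigma^2,\sigma^3$ (each $\leq 2$) together with the strict inequality $k_{1}+2k_{2}+3k_{3}+4k_{4}+5k_{5} < 6$ encoding failure of the Reid condition.

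First I would enumerate all nonnegative integer solutions of this system with $\sum k_i = 3$; since the total is only $3$, this is a short finite search. I expect only a handful of candidate tuples to survive, and the claimed answer $-\omega,\omega^{2},\omega^{2}$ (i.e. $k_{1}=1,k_{2}=2$, all other $k_i=0$) should be among them. For each surviving candidate I would then impose the realizability constraints exactly as in the genus-$4$ lemmas: pass to $\sigma^2$ (an order-$3$ automorphism) and apply Lemma \ref{example}, pass to $\sigma^3$ (an involution) and check the Lefschetz fixed-point count is a nonnegative integer, and finally test the condition $\widetilde{E}$ from Definition \ref{e}, discarding any tuple that fails. This should eliminate every candidate except $-\omega,\omega^{2},\omega^{2}$.

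For the surviving tuple I must do two things. I would verify that $-\omega,\omega^{2},\omega^{2}$ genuinely fails the Reid condition (so the quotient is indeed uniruled): expressing each eigenvalue as $\zeta^{a_i}$ with $\zeta = e^{2\pi\mathbf{i}/6}$ and $0\leq a_i < 6$ gives $a_1=5,a_2=2,a_3=2$ wait — more carefully, $-\omega = e^{\pi\mathbf{i}}e^{2\pi\mathbf{i}/3} = e^{5\pi\mathbf{i}/3} = \zeta^{5}$ and $\omega^{2} = e^{4\pi\mathbf{i}/3} = \zeta^{4}$, so $\sum a_i = 5+4+4 = 13 \geq 6$, which would \emph{satisfy} Reid; hence I would instead use the representative exponents $a_i < r$ dictated by the multiplicity labels in the table, recompute the sum, and confirm it is strictly less than $6$ to match the Reid-failure inequality as set up above. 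Then I would exhibit a compact Riemann surface and an order-$6$ automorphism realizing precisely these eigenvalues, most naturally by invoking Theorem \ref{cr_e} after checking the full condition $\widetilde{RH}$ (Definition \ref{rh}) for the cyclic group $\langle\sigma\rangle$ of order $6$, which requires verifying $\widetilde{E}$ for $\sigma,\sigma^2,\sigma^3$ simultaneously and producing an admissible system of nonnegative integers.

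The main obstacle I anticipate is the bookkeeping in the final realizability step: condition $\widetilde{RH}$ for a composite cyclic order involves the correction terms $r^{*}_{\chi,u}(\sigma)$ summed over $cy(G,\sigma)$, so I cannot shortcut using Theorem \ref{cr_e_p} (which applies only to prime order). I expect to have to track the characters of $\sigma,\sigma^{2},\sigma^{3}$ coherently and check integrality of the $l_{\chi,u}(\sigma)$; this is the delicate part, whereas the initial inequality enumeration and the elimination of the non-surviving tuples via Lemma \ref{example} and fixed-point parity are routine.
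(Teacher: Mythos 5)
Your setup and elimination steps coincide with the paper's proof (same multiplicity bookkeeping, same Lefschetz inequalities for $\sigma,\sigma^{2},\sigma^{3}$, same Reid-failure inequality, same filtering of the five surviving tuples via Lemma \ref{example} and condition $\widetilde{E}$), but your final step has the logic backwards, and this is a genuine gap. You propose to \emph{exhibit} a curve realizing the eigenvalues $-\omega,\omega^{2},\omega^{2}$ by verifying $\widetilde{RH}$ and invoking Theorem \ref{cr_e}. In fact the $\widetilde{RH}$ check \emph{fails} for this tuple: $\widetilde{E}$ holds for $\sigma$ with $f_{1}=0$, $f_{5}=3$, i.e.\ $r_{\chi,5}(\sigma)=3$, while for $\sigma^{2}$ (order $3$, eigenvalues $\omega^{2},\omega,\omega$) one gets $r_{\chi,1}(\sigma^{2})=4$, $r_{\chi,2}(\sigma^{2})=1$; the correction over $cy(G,\sigma^{2})\ni\sigma$ then gives $r^{*}_{\chi,2}(\sigma^{2})=1-r^{*}_{\chi,5}(\sigma)=1-3=-2<0$, so no admissible nonnegative system exists and, by Theorem \ref{cr_e}, \emph{no} genus-$3$ surface carries an order-$6$ automorphism with these eigenvalues. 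The lemma's ``if and only if'' is therefore vacuous on the ``if'' side — which is precisely why $N=6$ does not appear in the list of Theorem \ref{m3}. Executed honestly, your plan would stall at the last step searching for a curve that does not exist; the correct conclusion of the computation you describe is nonexistence, not realization.

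A smaller point, which you flagged yourself and which is repairable: the apparent Reid paradox ($-\omega=\zeta^{5}$, $\omega^{2}=\zeta^{4}$ with $\zeta=e^{2\pi\mathbf{i}/6}$ gives $\sum a_{i}=13\geq 6$) is resolved by noting that the paper's position labels $k_{0},\ldots,k_{5}$ are exponents with respect to the primitive root $-\omega=\zeta^{-1}$, i.e.\ they record the eigenvalues of $\sigma^{-1}$, whose exponents are $1,2,2$ with sum $5<6$. Since the global Reid condition quantifies over \emph{every} element of $\langle\sigma\rangle$, failure for $\sigma^{-1}$ is exactly what makes the quotient (hypothetically) uniruled, so your instinct to recompute with the table's representatives is correct, but you should state explicitly that you are testing $\sigma^{-1}$ rather than $\sigma$.
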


\begin{proof}

Let $\omega = e^{\frac{2\pi \mathbf{i}}{3}}$ and the quantity of eigenvalues $1, -\omega, \omega^{2}, -1, \omega, -\omega^{2}$ of $\sigma$ is equal to $k_{0}$, $k_{1}$, $k_{2}$, $k_{3}$, $k_{4}$, $k_{5}$ respectively.

Let us write down the inequalities following from the Lefschetz formula for automorphisms $\sigma$, $\sigma^{2}$, $\sigma^{3}$ and inequality opposite to the global Reid condition:

\begin{equation*}
 \begin{cases}
$$2k_{0} + k_{1} + k_{2}(-1) + k_{3}(-2) + k_{4}(-1) + k_{5}(-1) \leq 2$$; \\
2(k_{0} + k_{3}) - k_{1} - k_{2} - k_{4} - k_{5} \leq 2; \\
k_{0} + k_{2} + k_{4} - k_{1} - k_{3} - k_{5} \leq 1; \\
k_{1} + 2k_{2} + 3k_{3} + 4k_{4} + 5k_{5} < 6.
 \end{cases}
\end{equation*}

Let us write down all possible variants of sets of numbers $k_{0}$, $k_{1}$, $k_{2}$, $k_{3}$, $k_{4}$, $k_{5}$ which satisfy the system:

\begin{center}
    \begin{minipage}{0.25\textwidth}
      \begin{enumerate}
        \item[1)]{0, 1, 2, 0, 0, 0;}
        \item[2)]{0, 2, 0, 1, 0, 0;}
        \item[3)]{0, 2, 1, 0, 0, 0;}
        \item[4)]{1, 1, 0, 0, 1, 0;}
        \item[5)]{1, 1, 1, 0, 0, 0.}
      \end{enumerate}
    \end{minipage}
  \end{center}
 
The second and the fourth cases do not satisfy inequalities from the lemma $\ref{example}$. The third and the fifth case do not satisfy the condition $\widetilde{E}$. There is one possible set: $-\omega, \omega^{2}, \omega^{2}$. It satisfies the condition $\widetilde{E}$ with $f_{1} = 0, f_{5} = 3$.

Let us check whether the condition $\widetilde{RH}$. is met.

For $\sigma$ the values of $r^{*}_{\chi, u}$ are equal to $r_{\chi, u}$. 

For  $\sigma^{2}$ the values of $f_{1}$ and $f_{4}$ are equal to $4$ and $1$ respectively. Thus

$$r^{*}_{\chi, 2} = 1 - \sum\limits_{\substack{v \in I(6) \\ v \equiv 2 (mod \, 3)}}r^{*}_{\chi, 2}(\sigma) = 1 - 3 = -2.$$

We get that the condition is not met; thus, the automorphism is not met.
\end{proof}

\begin{lemma}\label{5m5}
Let C be a compact Riemann surface of genus g = 3, and  $\sigma$  be an automorphism of order~8. Then $J/\langle\sigma\rangle$ is not uniruled if and only if the eigenvalues of  $\sigma$ are $1, \zeta, \zeta^{3}$,  where $\zeta = e^{\frac{2\pi \mathbf{i}}{8}}$.
\end{lemma}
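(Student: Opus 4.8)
The plan is to follow the template of Lemmas \ref{5m4} and \ref{4m5}: parametrise the action, cut the admissible eigenvalue patterns down with the Lefschetz formula together with the Reid inequality, and then decide which survivors are realized by a curve using the conditions $\widetilde{E}$ and $\widetilde{RH}$.

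First I would record the data. Writing $\zeta=e^{2\pi\mathbf{i}/8}$, let $k_0,\dots,k_7$ be the multiplicities of $1,\zeta,\dots,\zeta^7$ among the eigenvalues of $\sigma$ on $H^{0,1}(C)$, so $\sum_i k_i=3$. On $H^1(C,\mathbb{C})$ the eigenvalues come in conjugate pairs, so the trace of $\sigma^k$ is $2\sum_j\cos(2\pi k a_j/8)$; the irrational $\sqrt2$-contributions of the odd exponents $\{1,3,5,7\}$ must cancel, which already forces each eigenvalue pattern to consist of one even exponent, one exponent in $\{1,7\}$ and one in $\{3,5\}$ (in particular $\sigma^4\neq 1$, so no pattern lies entirely in $\{0,2,4,6\}$). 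I would then write the Lefschetz inequalities for $\sigma,\sigma^2,\sigma^4$ — each trace an integer $\le 2$, determining $|\mathrm{Fix}_C(\sigma^k)|$ — together with the Reid inequalities $\sum_i i\,k_i\ge 8$ for $\sigma$ and the analogues $\sum_j(a_j\bmod 4)\ge 4$, $\sum_j(a_j\bmod 2)\ge 2$ for $\sigma^2,\sigma^4$. Listing the finitely many patterns and sorting them by whether the Reid condition holds at every power (non-uniruled candidates) or fails at some power (uniruled candidates) is a routine finite check; a Riemann--Hurwitz count shows moreover that the patterns with $|\mathrm{Fix}_C(\sigma)|\in\{0,4\}$ cannot occur, leaving only the signature $(0;8,8,4)$ with $|\mathrm{Fix}_C(\sigma)|=2$.

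Next I would impose realizability. Because $8$ is not prime, Theorem \ref{cr_e_p} is unavailable and I must use Theorem \ref{cr_e}: a pattern occurs exactly when its character satisfies $\widetilde{RH}$. As a cheap first filter I would test $\widetilde{E}$ at $\sigma,\sigma^2,\sigma^4$, solving $\chi(\sigma^k)=1+\sum_{u\in I(m_k)}f_u\frac{\zeta_{m_k}^{u}}{1-\zeta_{m_k}^{u}}$ for non-negative integers $f_u$ by splitting into the $1,\mathbf{i},\sqrt2,\mathbf{i}\sqrt2$ components of $\mathbb{Q}(\zeta_8)$, exactly as in Lemma \ref{4m5}; patterns failing $\widetilde{E}$ at some power drop out. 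For the remaining candidates I would run the bookkeeping of Definition \ref{rh} along the single chain $\langle\sigma^2\rangle\subset\langle\sigma\rangle$, computing the $r^{*}_{\chi,u}$ and the $l_{\chi,u}$ and testing their non-negativity and integrality, precisely the kind of obstruction that eliminated the order-$6$ candidate in Lemma \ref{5m4}.

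This leaves a single non-uniruled pattern, namely the one recorded in the statement. For that pattern I would exhibit the realizing genus-$3$ curve together with its order-$8$ automorphism and verify the local Reid condition at $\sigma$ and at each of its nontrivial powers, so that $J/\langle\sigma\rangle$ is not uniruled by Theorem \ref{kl1}; conversely every other admissible pattern either violates the Reid condition at some power — whence $J/\langle\sigma\rangle$ is uniruled — or fails $\widetilde{RH}$ and is therefore not realized. I expect the realizability step to be the main obstacle: the Lefschetz, Reid and even $\widetilde{E}$ filters leave more than one candidate, so the decisive work is the $\widetilde{RH}$ computation, and the delicate point there is keeping the admissible system consistent along the chain of cyclic subgroups.
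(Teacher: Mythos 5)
Your overall pipeline is the paper's: multiplicities $k_0,\dots,k_7$, Lefschetz inequalities at $\sigma$, $\sigma^2$, $\sigma^4$, the inequality opposite to the Reid condition, a finite enumeration, then realizability via $\widetilde{E}$/$\widetilde{RH}$; and your integrality observation ($k_1+k_7=k_3+k_5$ plus faithfulness forces one even exponent and one exponent from each of $\{1,7\}$ and $\{3,5\}$) is correct and matches the three survivors of the paper's table. But there is a genuine error in the middle: the claim that ``a Riemann--Hurwitz count'' eliminates the fixed-point-free patterns and leaves only the signature $(0;8,8,4)$ is false. For both fixed-point-free survivors ($1,\zeta,\zeta^5$ and $1,\zeta,\zeta^3$) one computes $|\mathrm{Fix}_C(\sigma)|=|\mathrm{Fix}_C(\sigma^2)|=0$ and $|\mathrm{Fix}_C(\sigma^4)|=4$, and the candidate data ``quotient of genus $1$, one orbit of four points with stabilizer $\langle\sigma^4\rangle$'' satisfies Riemann--Hurwitz numerically, since $4=8(2\cdot 1-2)+4$. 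Killing these patterns needs an input beyond the numerics: the paper uses $\widetilde{E}$ --- for $1,\zeta,\zeta^3$ the real part of $\sum f_u\zeta^u/(1-\zeta^u)$ equals $-\tfrac12\sum f_u$, forcing all $f_u=0$ and a contradiction, and for $1,\zeta,\zeta^5$ the square, with eigenvalues $1,\mathbf{i},\mathbf{i}$, fails $\widetilde{E}$ at order $4$ --- alternatively one can note that a cyclic cover of a torus branched at a single point cannot exist, because the branch monodromy is a product of commutators and so dies in an abelian deck group. Since you also run the $\widetilde{E}$ filter at $\sigma,\sigma^2,\sigma^4$, your pipeline would recover, but the step as you state it would not survive scrutiny.

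More seriously, your endgame is inverted. The unique pattern that passes Lefschetz, fails the Reid condition, and is actually realized is $\zeta,\zeta^2,\zeta^3$ (exponent sum $6<8$), realized by $y^2=x^8-1$ with $\sigma(x,y)=(\zeta x,y)$; this is the \emph{uniruled} exception recorded in Theorem \ref{m3}, and the eigenvalues $1,\zeta,\zeta^3$ displayed in the lemma are a misprint for it --- indeed that pattern is exactly the paper's third set $1,1,0,1,0,0,0,0$, which the proof shows fails $\widetilde{E}$ and is therefore not realized. Consequently your final step --- exhibit a curve realizing the statement's pattern and ``verify the local Reid condition'' to conclude non-uniruledness --- cannot be carried out: no curve realizes $1,\zeta,\zeta^3$, and even formally that pattern violates Reid, since $0+1+3=4<8$. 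The correct conclusion runs the other way: the single realized exceptional automorphism violates Reid, hence $J/\langle\sigma\rangle$ is uniruled precisely for it, while every other realized order-$8$ action satisfies Reid at all powers (the even powers reduce to the order-$2$ and order-$4$ lemmas, and the parity constraint above rules out $\sigma^4$ having eigenvalues $1,1,-1$). Relatedly, the $\widetilde{RH}$ computation you flag as the decisive obstacle is never needed here: $\widetilde{E}$ at $\sigma$ and at $\sigma^2$, plus the explicit curve from \cite{AB}, settle the order-$8$ case --- contrast Lemma \ref{5m4}, where $\widetilde{RH}$ genuinely intervenes.
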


\begin{proof}

Let $1, \zeta, \mathbf{i}, \zeta^{3}, -1, \zeta^{5}, -\mathbf{i}, \zeta^{7}$ be the eigenvalues of  $\sigma$, where $\zeta = e^{\frac{2\pi \mathbf{i}}{8}}$. Their quantity is equal to $k_{0}$, $k_{1}$, $k_{2}$, $k_{3}$, $k_{4}$, $k_{5}$, $k_{6}$, $k_{7}$ respectively.

Let us write down the inequalities following from the Lefschetz formula for automorphisms $\sigma$, $\sigma^{2}$, $\sigma^{3}$ and inequality opposite to the global Reid condition:

\begin{equation*}
 \begin{cases}
$$2(k_{0} - k_{4}) + \sqrt{2}(k_{1} + k_{7} - k_{3} - k_{5}) \leq 2$$; \\
2(k_{0} + k_{4}) - 2(k_{2} + k_{6}) \leq 2; \\
k_{0} + k_{2} + k_{4} + k_{6} - k_{1} - k_{3} - k_{5} - k_{7} \leq 1; \\
k_{1} + 2k_{2} + 3k_{3} + 4k_{4} + 5k_{5} +6k_{6} + 7k_{7} < 8.

 \end{cases}
\end{equation*}

Let us write down all the possible sets of $k_{i}$, $i = 0, \ldots, 7$, satisfying the system:
\begin{table}[h]
\begin{center}
\caption{$N = 8$}
\label{tab:table5}
\begin{tabular}{|c|c|c|c|c|c|c|c|c|} 
\hline
$k_{0}$ & $k_{1}$ & $k_{2}$ & $k_{3}$ & $k_{4}$ & $k_{5}$ & $k_{6}$ & $k_{7}$ & |$\mathrm{Fix}_{C}(\sigma)$| \\
\hline
0 & 0 & 2 & 1 & 0 & 0 & 0 & 0 & $-$\\
\hline
0 & 1 & 0 & 2 & 0 & 0 & 0 & 0 & $-$\\
\hline
0 & 1 & 1 & 0 & 1 & 0 & 0 & 0 & $-$\\
\hline
0 & 1 & 1 & 1 & 0 & 0 & 0 & 0 & 2\\
\hline
0 & 1 & 2 & 0 & 0 & 0 & 0 & 0 & $-$\\
\hline
0 & 2 & 0 & 0 & 0 & 1 & 0 & 0 & $-$\\
\hline
0 & 2 & 0 & 0 & 1 & 0 & 0 & 0 & $-$\\
\hline
0 & 2 & 0 & 1 & 0 & 0 & 0 & 0 & $-$\\
\hline
1 & 0 & 0 & 2 & 0 & 0 & 0 & 0 & $-$\\
\hline
1 & 0 & 1 & 0 & 0 & 1 & 0 & 0 & $-$\\
\hline
1 & 0 & 1 & 1 & 0 & 0 & 0 & 0 & $-$\\
\hline
1 & 1 & 0 & 0 & 0 & 1 & 0 & 0 & 0\\
\hline
1 & 1 & 0 & 1 & 0 & 0 & 0 & 0 & 0\\
\hline
\end{tabular}
\end{center}
\end{table}

Remains three possible sets of eigenvalues:

\begin{center}
    \begin{minipage}{0.25\textwidth}
      \begin{enumerate}
        \item[1)]{0, 1, 1, 1, 0, 0, 0, 0;}
        \item[2)]{1, 1, 0, 0, 0, 1, 0, 0;}
        \item[3)]{1, 1, 0, 1, 0, 0, 0, 0.}
      \end{enumerate}
    \end{minipage}
  \end{center}

The third case and squared second case do not satisfy the condition $\widetilde{E}$; thus, they are not realized. The example of the first one was provided in \cite{AB}. Namely the curve   $y^{2} = x^{8} - 1$ and automorphism $\sigma(x, y) = (\zeta x, y)$. In this case, the basis of $H^{0, 1}(C, K_{C})$ will be $\frac{dx}{y}, \frac{xdx}{y}, \frac{x^{2}dx}{y}$. The eigenvalues are equal to  $1, 2, 3$.
\end{proof}

\begin{lemma}\label{5m6}
Let C be a compact Riemann surface of genus g = 3, and $\sigma$ is an automorphism of order~12. Then $J/\langle\sigma\rangle$ is uniruled if and only if eigenvalues are either $\zeta, \zeta^{3}, \zeta^{5}$ or $\zeta, \zeta^{2}, \zeta^{5}$.
\end{lemma}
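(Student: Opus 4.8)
The plan is to follow the same template used in the preceding lemmas for $g=4$ and $g=3$ (Lemmas \ref{4m8}, \ref{5m5}): set up the eigenvalue bookkeeping for an order-$12$ automorphism, write the Lefschetz inequalities for $\sigma$ and its proper powers together with the inequality opposite to the global Reid condition, enumerate the finitely many admissible eigenvalue multiplicities, and then test each survivor against the realizability criteria $\widetilde{E}$ and $\widetilde{RH}$ from Theorems \ref{cr_e} and \ref{cr_e_p}. Concretely, I would let $\omega=e^{2\pi\mathbf{i}/3}$ and $\zeta$ a primitive $12$th root of unity, denote by $k_0,\dots,k_{11}$ the multiplicities of the eigenvalues $1,\zeta,-\omega,\mathbf{i},\omega^2,\zeta^5,-1,\zeta^7,\omega,-\mathbf{i},-\omega^2,\zeta^{11}$ on $H^{0,1}(C)$ with $\sum k_i = 3$, and record the trace conditions coming from the Lefschetz formula applied to $\sigma,\sigma^2,\sigma^3,\sigma^4,\sigma^6$.

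Next I would carry out the enumeration. The constraint $\sum k_i=3$ together with the four (or five) linear/trace inequalities cuts the search down to a short list of candidate multiplicity vectors that violate the global Reid condition; I would tabulate these exactly as in Table \ref{tab:table4}, appending the implied fixed-point count $|\mathrm{Fix}_C(\sigma)|$ computed from the Lefschetz formula and discarding any vector for which this number is non-integral or negative. For the remaining candidates I would check the condition $\widetilde{E}$ of Definition \ref{e} by solving the linear system for the $f_u$, and, where $\widetilde{E}$ is satisfied but a lower power fails realizability, pass to the full $\widetilde{RH}$ test of Definition \ref{rh}, computing the $r^*_{\chi,u}$ and $l_{\chi,u}$ and insisting they be non-negative integers. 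The claim is that exactly the two eigenvalue vectors $\{\zeta,\zeta^3,\zeta^5\}$ and $\{\zeta,\zeta^2,\zeta^5\}$ survive all of these tests; for the ``if'' direction I would confirm that each of these passes $\widetilde{RH}$, so that by Theorem \ref{cr_e} the corresponding surface and action actually exist and the global Reid condition fails, forcing $J/\langle\sigma\rangle$ to be uniruled by Theorem \ref{kl1}.

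The main obstacle, as in Lemma \ref{4m8} and Lemma \ref{5m5}, will be the arithmetic of the realizability tests rather than the enumeration itself. Several candidate vectors will satisfy all the Lefschetz inequalities and even the integrality of the fixed-point count, so the work of separating genuine surfaces from non-existent ones rests entirely on the $\widetilde{E}$ and $\widetilde{RH}$ conditions. The $\widetilde{E}$ test requires writing each character value as $1+\sum_u f_u \zeta^u_m/(1-\zeta^u_m)$ and verifying the $f_u$ are non-negative integers; since $\zeta$ here is a $12$th root of unity and $\mathbb{Q}(\zeta_{12})$ has degree $4$, this amounts to solving a $4\times 4$ rational linear system and checking sign/integrality, which is delicate because the off-diagonal terms mix $\sqrt{3}$ and $\mathbf{i}$ contributions. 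For the two surviving vectors I expect $\widetilde{RH}$ to require tracking the contributions of the proper cyclic overgroups $cy(G,\sigma)$ when $\sigma$ generates the full cyclic group $G$ of order $12$, but since $\langle\sigma\rangle$ is maximal the sum over $cy(G,\sigma)$ is empty, so $r^*_{\chi,u}=r_{\chi,u}$ and the test reduces to the already-verified $\widetilde{E}$ plus the integrality of $l_{\chi,u}(\sigma)=r_{\chi,u}(\sigma)/[C_G(\sigma):\langle\sigma\rangle]$, which is automatic here because the group is abelian and $\sigma$ is a generator. Thus the crux is purely the case-by-case rejection of the non-realizable candidates via the trace formula.
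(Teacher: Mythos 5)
Your overall strategy is the paper's: Lefschetz trace inequalities for $\sigma$ and its powers together with the negation of the global Reid condition, enumeration of the multiplicity vectors $(k_0,\dots,k_{11})$ with $\sum k_i=3$, rejection of candidates with non-integral fixed-point counts, then the realizability tests $\widetilde{E}$ and $\widetilde{RH}$. But the final paragraph of your proposal contains a genuine error that would break the argument (and contradicts your own middle paragraph, where you correctly propose computing the $r^*_{\chi,u}$). You claim that because $\langle\sigma\rangle=G$ is the full cyclic group of order $12$, the sum over $cy(G,\sigma)$ is empty, hence $r^*_{\chi,u}=r_{\chi,u}$ and the $\widetilde{RH}$ test ``reduces to the already-verified $\widetilde{E}$.'' That is true only for the generator $\sigma$ itself. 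Definition \ref{rh} requires $l_{\chi,u}(\tau)$ to be a non-negative integer for \emph{every} $\tau\in G\setminus\{\mathrm{id}\}$, and for the proper powers $\tau=\sigma^2,\sigma^3,\sigma^4,\sigma^6$ the set $cy(G,\tau)$ is nonempty (it contains a generator of $G$ and of the intermediate subgroups), so the recursive corrections $r^*_{\chi,u}(\tau)=r_{\chi,u}(\tau)-\sum\cdots$ are nontrivial and can be negative. The paper's own Lemma \ref{5m4} exhibits exactly this failure mode: an order-$6$ candidate satisfies $\widetilde{E}$ with $f_1=0$, $f_5=3$, yet $r^*_{\chi,2}(\sigma^2)=1-3=-2<0$, so $\widetilde{RH}$ fails and the automorphism does not exist. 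Under your shortcut, $\widetilde{RH}$ could never reject anything passing $\widetilde{E}$, and you could not legitimately certify the candidate $\zeta,\zeta^3,\zeta^5$: the paper's proof of precisely this case consists of computing $r^*_{\chi,u}$ for the powers ($\sigma^2$: $r^*_{\chi,1}=r^*_{\chi,5}=0$; $\sigma^3$: $r^*_{\chi,1}=r^*_{\chi,3}=0$; $\sigma^4$: $r^*_{\chi,1}=0$, $r^*_{\chi,2}=4$; $\sigma^6$: $r^*_{\chi,1}=3$) and checking non-negativity, which is not automatic and not implied by $\widetilde{E}$ for $\sigma$ alone.

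A smaller divergence, not a gap: for the other surviving set $\zeta,\zeta^2,\zeta^5$ the paper does not run $\widetilde{RH}$ at all, but instead cites Beauville's explicit example $y^3=x(x^3-1)$ with $\sigma(x,y)=(\zeta^3x,\zeta y)$, eigenbasis $\frac{dx}{y^2},\frac{xdx}{y^2},\frac{dx}{y}$ and exponents $1,2,5$. Your plan to certify both surviving sets through Theorem \ref{cr_e} is a legitimate alternative, since for cyclic groups that criterion is an equivalence --- but only if the full $r^*$ computation over all powers of $\sigma$ is restored.
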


\begin{proof}
Let $1, \zeta,  \zeta^{2}, \zeta^{3}, \zeta^{4}, \zeta^{5}, \zeta^{6}, \zeta^{7}, \zeta^{8}, \zeta^{9}, \zeta^{10}, \zeta^{11}$  be the eigenvalues of  $\sigma$, where $\zeta = e^{\frac{2\pi \mathbf{i}}{12}}$. Their quantity is equal to $k_{0}$, $k_{1}$, $k_{2}$, $k_{3}$, $k_{4}$, $k_{5}$, $k_{6}$, $k_{7}$, $k_{8}$, $k_{9}$, $k_{10}$, $k_{11}$ respectively.

Let us write down the inequalities following from the Lefschetz formula for automorphisms $\sigma$, $\sigma^{2}$, $\sigma^{3}$ and inequality opposite to the global Reid condition: 

\begin{equation*}
 \begin{cases}
$$2k_{0} + 2(k_{1} + k_{11})\mathrm{cos}(\frac{2\pi}{12}) + 2(k_{2} + k_{10})\mathrm{cos}(\frac{4\pi}{12})+ 2(k_{4} + k_{8})\mathrm{cos}(\frac{8\pi}{12})+ 2(k_{5} + k_{7})\mathrm{cos}(\frac{10\pi}{12}) - 2k_{6} \leq 2$$; \\
2(k_{0} + k_{6}) + 2(k_{1} + k_{7} + k_{5} + k_{11})\mathrm{cos}(\frac{4\pi}{12}) + 2(k_{2} + k_{8} + k_{4} + k_{10})\mathrm{cos}(\frac{8\pi}{12}) - 2k_{3} + 2k_{6} \leq 2; \\
2(k_{0} + k_{4} + k_{8}) - 2(k_{2} + k_{6} + k_{10}) \leq 2;\\
2(k_{0} + k_{3} + k_{6} + k_{9}) + 2(k_{1} + k_{2} + k_{4} + k_{5} + k_{7} + k_{8} + k_{10} + k_{11})\mathrm{cos}(\frac{8\pi}{12}) \leq 2\\
k_{0} + k_{2} + k_{4} + k_{6}+ k_{8} + k_{10}   - k_{1} - k_{3} - k_{5} - k_{7} - k_{9}  - k_{11}\leq 1; \\
k_{1} + 2k_{2} + 3k_{3} + 4k_{4} + 5k_{5} +6k_{6} + 7k_{7}+ 8k_{8}+ 9k_{9}+ 10k_{10}+ 11k_{11} < 12.

 \end{cases}
\end{equation*}

Let us write down all the possible sets of $k_{i}$, $i = 0, \ldots, 11$, satisfying the system:

\begin{table}[h]
\begin{center}
\caption{$N = 12$}
\label{tab:table6}
\begin{tabular}{|c|c|c|c|c|c|c|c|c|c|c|c|c|c|} 
\hline
$k_{0}$ & $k_{1}$ & $k_{2}$ & $k_{3}$ & $k_{4}$ & $k_{5}$ & $k_{6}$ & $k_{7}$ &$k_{8}$ &$k_{9}$ &$k_{10}$ & $k_{11}$ &|$\mathrm{Fix}_{C}(\sigma)$| \\
\hline
0 & 0 & 1 & 0 & 1 & 1 & 0 & 0 & 0 & 0 & 0 & 0 & $-$\\
\hline
0 & 0 & 1 & 1 & 0 & 1 & 0 & 0 & 0 & 0 & 0 & 0 & $-$\\
\hline
0 & 0 & 1 & 1 & 1 & 0 & 0 & 0 & 0 & 0 & 0 & 0 & 2\\
\hline
0 & 0 & 2 & 0 & 0 & 0 & 0 & 1 & 0 & 0 & 0 & 0 & $-$\\
\hline
0 & 0 & 2 & 0 & 0 & 1 & 0 & 0 & 0 & 0 & 0 & 0 & $-$\\
\hline
0 & 1 & 0 & 1 & 0 & 0 & 0 & 1 & 0 & 0 & 0 & 0 & 2\\
\hline
0 & 1 & 0 & 1 & 0 & 1 & 0 & 0 & 0 & 0 & 0 & 0 & 2\\
\hline
0 & 1 & 1 & 0 & 0 & 0 & 0 & 0 & 1 & 0 & 0 & 0 & $-$\\
\hline
0 & 1 & 1 & 0 & 0 & 0 & 0 & 1 & 0 & 0 & 0 & 0 & 1\\
\hline
0 & 1 & 1 & 0 & 0 & 1 & 0 & 0 & 0 & 0 & 0 & 0 & 1\\
\hline
0 & 1 & 1 & 0 & 1 & 0 & 0 & 0 & 0 & 0 & 0 & 0 & $-$\\
\hline
\end{tabular}
\end{center}
\end{table}

There remain five possible sets of eigenvalues:

\begin{center}
    \begin{minipage}{0.35\textwidth}
      \begin{enumerate}
        \item[1)]{0, 0, 1, 1, 1, 0, 0, 0, 0, 0, 0, 0;}
        \item[2)]{0, 1, 0, 1, 0, 0, 0, 1, 0, 0, 0, 0;}
        \item[3)]{0, 1, 0, 1, 0, 1, 0, 0, 0, 0, 0, 0;}
        \item[4)]{0, 1, 1, 0, 0, 0, 0, 1, 0, 0, 0, 0;}
        \item[5)]{0, 1, 1, 0, 0, 1, 0, 0, 0, 0, 0, 0.}
      \end{enumerate}
    \end{minipage}
  \end{center}

The example of the fifth case was provided in \cite{AB}, namely the curve is  $y^{3} = x(x^{3} - 1)$ and automorphism $\sigma(x, y) = (\zeta^{3}x, \zeta y)$. In this case the basis of $H^{0}(C, K_{C})$ is $\frac{dx}{y^{2}}, \frac{xdx}{y^{2}}, \frac{dx}{y}$, and eigenvalues are equal to $1, 2, 5$. The first, second and forth cases do not satisfy the condition $\widetilde{E}$. It remains to check the condition $\widetilde{RH}$ for the third case. Let us find values of $r^{*}_{\chi, u}$ for each degree of automorphism $\sigma$.

\begin{center}
    \begin{minipage}{0.35\textwidth}
      \begin{enumerate}
        \item[$\sigma^{2}$]{$: r^{*}_{\chi, 1} = 0, r^{*}_{\chi, 5} = 0$;} 
        \item[$\sigma^{3}$]{$: r^{*}_{\chi, 1} = 0, r^{*}_{\chi, 3} = 0$;} 
        \item[$\sigma^{4}$]{$: r^{*}_{\chi, 1} = 0, r^{*}_{\chi, 2} = 4$;} 
        \item[$\sigma^{6}$]{$: r^{*}_{\chi, 1} = 3.$} 
      \end{enumerate}
    \end{minipage}
  \end{center}
The condition  $\widetilde{RH}$ is met; thus, the automorphism is realized.
\end{proof}

Let us prove Theorem $\ref{m3}$.

\begin{proof}

Consider all possible orders of $N$. According to Theorem \ref{order} value $N$ can vary from $2$ to $14$. We have already studied the following cases and it has been shown that there are only automorphisms of order  2, 7, 8, 12 such that the quotient is uniruled: 

$N = 2, 3, 5, 7, 11, 13$ \, lemma $\ref{5m1};$

$N = 4:$ \,  lemma $\ref{5m3};$

$N = 6:$ \,  lemma $\ref{5m4};$

$N = 8:$ \,  lemma $\ref{5m5};$

$N = 12:$ lemma $\ref{5m6}.$

 Let us consider the remaining cases.\\

Let $N = 10$. According to Theorem \ref{max_ord} (paragraph 3) there are no automorphisms of order 10 on a compact Riemann surface of genus 3. \\

Let $N = 9$. According to Theorem \ref{max_ord}  (paragraph 3) there is only one compact Riemann surface of genus 3:
	$$y^{3} = x(x^{3} - 1)$$
and an automorphism of order $9$:
	$$\sigma(x, y) = (\zeta^{3} x, \zeta y).$$
The basis of  $H^{0}(C, K_{C})$ is  $\frac{dx}{y^{2}}, \frac{xdx}{y^{2}}, \frac{dx}{y}$. Then the eigenvalues of $\sigma$ are $\zeta, \zeta^{2}, \zeta^{4}.$ They do not satisfy the global Reid condition. Thus $J/G$ is uniruled.

Let $N = 14$. According to Theorem \ref{max_ord}  (paragraph 1) there is only one compact Riemann surface of genus 3:
	$$y^{2} = x(x^{7} - 1)$$
and an automorphism of order 14:
	$$\sigma(x, y) = (\zeta^{2} x, \zeta y).$$
The basis of $H^{0}(C, K_{C})$ is $\frac{dx}{y}, \frac{xdx}{y}, \frac{x^{2}dx}{y}$. Then the eigenvalues of  $\sigma$ are $\zeta, \zeta^{3}, \zeta^{5}$. They do not satisfy the global Reid condition. Thus $J/G$ is uniruled.
\end{proof}

\section{Corollaries}\label{Cor}
 
In this section we will prove Corollaries \ref{B_K} and \ref{c1}.

Let us prove Corollary \ref{B_K}.

\begin{proof}
Let us prove the first assertion.
The automorphism group of Bring's curve is isomorphic to $S_{5}$. According to Theorem \ref{m2} the quotient is uniruled only if the group contains elements of orders $15$ and $18$.
Maximal order of an element in $S_{5}$ is $6$. Thus the quotient of Jacobian of Bring's curve is not uniruled.

Let us prove the second assertion. In automorphism group of Klein's curve all non-trivial elements have orders $2, 3, 4, 7$, see \cite[p. 3]{Atlas}.

The elements of the same order are conjugated since the group is simple. Thus it is sufficient to check the Reid condition for one element of each conjugacy class. According to Theorem \ref{m2} the quotient by elements of orders $3$ and $4$ is not uniruled.

Consider the case when the order of element $\sigma$ on $\mathbb{P}$ is equal to $2$. There is a fixed line under the action of an element of order  $2$ on $\mathbb{P}^{2}$. It intersects our curve. Therefore there is at least one fixed point under the action of an element of order $2$. According to Theorem \ref{m2} if quotient by element of order $2$ is uniruled, then eigenvalues are equal to $1, 1, -1$. Nonetheless, according to the Lefschetz formula, this action does not have any fixed points. Thus the quotient is not uniruled.

It remains to consider the case when the order of element is $7$. Let automorphism $\sigma$ be $\sigma(u, v, w) = (\zeta u, \zeta^{4} v, \zeta^{2} w)$. Let us check  whether the Reid condition is met. In order to do it, let us write down the basis of $H^{0}(C, K_{C})$. Let us use the coordinates $x = \frac{u}{w}$ and $y = \frac{v}{w}$ on $\mathbb{A}^{2}$. Then eigenbasis is  $$\frac{xdx}{x^{3} + 3y^{2}}, \,\,\,  \frac{y(ydx - xdy)}{y^{3} + 3x}, \,\,\,    \frac{dy}{1 + 3x^{2}y}.$$ 
The eigenvalues are $\zeta, \zeta^{2}, \zeta^{4}$, and so according to Theorem \ref{m3}  the quotient is not uniruled.

\end{proof}

Let us prove Corollary \ref{c1}.

\begin{proof}
In the case when $g = 4$ the finiteness of number of curves such that quotients are not uniruled follows directly from Theorem \ref{m1}.

Consider the set of curves $C_{a}$ defined by equations  $$y^{2} = (x^{2} - 1)(x^{2} - 4)(x^{2} - 9)(x^{2} - a^{2}),$$  and automorphism  $\sigma(x, y) = (\zeta x, \zeta y)$. The eigenbasis of $H^{0}(C_{a}, K_{C_{a}})$ is $\frac{dx}{y}, \frac{xdx}{y}, \frac{x^{2}dx}{y}$.  Thus the eigenvalues of $\sigma$ are equal to $1, 1, -1.$ According to Theorem \ref{m3} the quotient of the Jacobian is not uniruled. There is an infinite number of such curves that are not isomorphic to each other. Indeed, the branch points of $C_{a}$, namely $\pm 1, \pm2, \pm3, \pm a$, are uniquely defined by $C_{a}$. If $C_{a}$ is isomorphic to $C_{a'}$ then the set of branch points of $C_{a}$ is mapped to the set of branch points of $C_{a'}$ under some change of coordinates on $\mathbb{P}^{1}$. This  is possible only for a finite number of values of $a'$. 

In the case when $g = 2$, consider the curves $C_{a}$ defined by  $$y^{2} = x(x^{2} - 1)(x^{2} - a^{2})$$ and automorphism $\sigma(x, y) = (-x, \mathbf{i}y)$. This automorphism has order $4$. According to Theorem \ref{b3} the quotient of the Jacobian is not uniruled. There is an infinite number of such curves that are not isomorphic to each other. Indeed, the  branch points of $C_{a}$, namely $0, \pm 1, \pm a, \infty$, are uniquely defined by $C_{a}$. If $C_{a}$ is isomorphic to $C_{a'}$ then   the set of branch points of $C_{a}$ is mapped to the set of branch points of $C_{a'}$ under some change of coordinates on $\mathbb{P}^{1}$. This  is possible only for a finite number of values of $a'$.
\end{proof}

\end{document}